\date{\today}
\let\expandafter\oldproof\csname\string\proof\endcsname
\let\oldendproof\endproof
\renewenvironment{proof}[1][\proofname]{%
	\oldproof[\bf #1]%
}{\oldendproof}
\newcommand{\ignore}[1]{}
\theoremstyle{plain}
\newtheorem{theorem}{Theorem}
\newtheorem{lemma}{Lemma}[section]
\newtheorem{claim}[lemma]{Claim}
\newtheorem{proposition}[lemma]{Proposition}
\newtheorem{corollary}[lemma]{Corollary}
\newtheorem{fact}[lemma]{Fact}
\DeclareMathOperator{\poly}{poly}
\newcommand{\Q}{\mathcal Q}
\newcommand{\ex}{\mathrm{ex}}
\newcommand{\maxcut}{\mathrm{maxcut}}
\definecolor{RED}{rgb}{1,0,0}\definecolor{BLUE}{rgb}{0,0,1} 
\title{A Generalized Tur\'an Problem and its Applications}
\author{Lior Gishboliner \thanks{School of Mathematics, Tel Aviv University, Tel Aviv 69978, Israel. Email: liorgis1@post.tau.ac.il}
\and Asaf Shapira \thanks{
School of Mathematics, Tel Aviv University, Tel Aviv 69978, Israel.
Email: asafico$@$tau.ac.il. Supported in part by ISF Grant 1028/16 and ERC Starting Grant 633509.}
}
\date{}
\begin{document}

\maketitle

%

\begin{abstract}

The investigation of conditions guaranteeing the appearance of cycles of certain lengths is one of
the most well-studied topics in graph theory. In this paper we consider a problem of this type which
asks, for fixed integers ${\ell}$ and $k$,
how many copies of the $k$-cycle guarantee the appearance of an $\ell$-cycle?
Extending previous results of Bollob\'{a}s--Gy\H{o}ri--Li and Alon--Shikhelman, we fully resolve this problem by giving tight (or nearly tight) bounds for all values of $\ell$ and $k$.

We also present a somewhat surprising application of the above mentioned estimates to the study of the graph removal lemma.
Prior to this work, all bounds for removal lemmas were either polynomial or there was a tower-type gap between the best
known upper and lower bounds. We fill this gap by showing that for every super-polynomial function $f(\varepsilon)$,
there is a family of graphs ${\cal F}$, such that the bounds for the ${\cal F}$ removal lemma are precisely given by $f(\varepsilon)$.
We thus obtain the first examples of removal lemmas with tight super-polynomial bounds.
A special case of this result resolves a problem of Alon and the second author, while another special case partially resolves a problem of Goldreich.

\end{abstract}

\section{Introduction}\label{sec:intro}

\subsection{Background and the Main Result}

Tur\'an's Theorem \cite{Turan}, one of the cornerstone results in graph theory, determines the maximum number of edges in an $n$-vertex graph that does not contain a $K_t$ (the
complete graph on $t$ vertices). Tur\'an's problem is the following more general question: for a fixed graph
$H$ and an integer $n$, what is the maximum number of edges in an $n$-vertex $H$-free graph? This quantity is denoted by $\mbox{ex}(n,H)$.
Estimating $\mbox{ex}(n,H)$ for various graphs $H$ is one of the most well-studied problems in graph theory. See \cite{Simonovits_Survey} for a survey.

Alon and Shikhelman \cite{Alon_Shikhelman} have recently initiated the systematic study of the following natural generalization of $\mbox{ex}(n,H)$;
for fixed graphs $H$ and $T$, estimate $\mbox{ex}(n,T,H)$, which is the maximum number of copies\footnote{When
counting copies of $T$ in $G$ we always mean unlabeled copies.} of $T$ in an
$n$-vertex graph that contains no copy of $H$. Note that $\mbox{ex}(n,H)=\mbox{ex}(n,K_2,H)$.
While some special cases of this abstract problem have already been considered by Erd\H{o}s in the 60's,
\cite{Erdos62, Erdos84} contains
bounds on $\mbox{ex}(n,T,H)$ for various pairs $T,H$. For the sake
of brevity we refer the reader to \cite{Alon_Shikhelman} for more background and motivation, as well as examples
of some well-studied problems in extremal combinatorics which can be cast in the setting of studying $\mbox{ex}(n,T,H)$
for various pairs $H$ and $T$.

One special case of Tur\'an's problem which has been extensively studied is the estimation of
$\mbox{ex}(n,C_k)$ (where $C_k$ denotes the $k$-cycle, i.e., the cycle of length $k$).
While for odd $k$ it is known \cite{Simonovits} that
$\mbox{ex}(n,C_k)= \lfloor n^2/4 \rfloor$ (for large enough $n$), the problem of determining the order of magnitude of $\mbox{ex}(n,C_k)$
for even $k$ is a long standing open problem in extremal graph theory, see the survey \cite{Verstraete} and its references.

The problem of estimating $\ex(n,C_k,C_{\ell})$ has recently received a lot of attention.
Bollob\'{a}s and Gy\H{o}ri \cite{BG} proved that
$\ex(n,C_3,C_5) = \Theta\big( n^{3/2} \big)$. Gy\H{o}ri and Li \cite{GL} extended this result by considering $\ex(n,C_3,C_{2\ell+1})$. The dependence of their upper bound on $\ell$ was subsequently improved upon by Alon and Shikhelman \cite{Alon_Shikhelman}. At this moment, the best known bounds imply that
\begin{equation}\label{eq:C3}
\Omega( \ex(n,\{C_4,C_6,\dots,C_{2\ell}\})) \leq
\ex(n,C_3,C_{2\ell+1}) \leq
O(\ell \cdot \ex(n,C_{2\ell})),
\end{equation}
where $\ex(n,\{C_4,C_6,\dots,C_{2\ell}\})$ is the maximal number of edges in an $n$-vertex graph with no copy of $C_{2t}$ for any $2 \leq t \leq \ell$.
The lower bound above was proved in \cite{GL}, and the upper bound in \cite{Alon_Shikhelman}.
According to \cite{Alon_Shikhelman}, a result similar to \eqref{eq:C3} was also obtained by F\"{u}redi and \"{O}zkahya \cite{FO}.
The lower and upper bounds in \eqref{eq:C3} are known to be of the same order of magnitude,
$\Theta \big( n^{1+1/\ell} \big)$, for $\ell \in \{2,3,5\}$ (see e.g. \cite{Benson,Wenger}).
Another notable recent result is the {\em exact} determination of $\ex(n,C_5,C_3)$ by \cite{G,HHKNR}.

Our main result in this paper, stated as Theorem \ref{thm:main}, significantly extends the above results
by giving asymptotically tight bounds for $\ex(n,C_k,C_{\ell})$ for all fixed $k,\ell$.
In this theorem, as well as later on in the paper, we write $O_{k}/\Omega_{k}/\Theta_{k}$ to indicate that the notation hides constants which depend on $k$. When we write $O/\Omega/\Theta$, we mean that the implicit constants are absolute.

\begin{theorem}\label{thm:main}
For distinct $k,\ell$ we have
$$\ex(n,C_k,C_{\ell}) =
\begin{cases}
\Theta_{k}\big( n^{k/2} \big) &
k \geq 5, \ell = 4, \\
\Theta_{k}\big( \ell^{\lceil k/2 \rceil} n^{\lfloor k/2 \rfloor} \big)
& \ell \geq 6 \text{ even, } k \geq 4, \\
\Theta_{k}\big( \ell^{\lceil k/2 \rceil} n^{\lfloor k/2 \rfloor} \big)
& k,\ell \text{ odd, } 5 \leq k < \ell.
\end{cases}
$$
\end{theorem}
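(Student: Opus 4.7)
The plan is to establish the upper and lower bounds separately in each of the three cases.

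The main structural tool for the upper bound is the \emph{common-neighbor bound}: in a $C_\ell$-free graph with $\ell$ even, any two vertices share at most $\ell/2 - 1$ common neighbors, since $\ell/2$ common neighbors would force a $K_{2,\ell/2}$, which contains a $C_\ell$. For $\ell = 4$ this specializes to at most one common neighbor per pair; for $\ell$ odd the bound fails directly (bipartite graphs contain $K_{2,t}$ for any $t$), but it can be recovered up to constants by passing to a large bipartite subgraph of $G$, which is then automatically $C_{\ell-1}$-free.

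Using this, for even $k = 2m$ I would parameterize each $C_{2m}$ by its odd-indexed vertices $v_1, v_3, \dots, v_{2m-1}$ together with the choice of each $v_{2i}$ as a common neighbor of $v_{2i-1}$ and $v_{2i+1}$. There are at most $n^m$ choices for the odd-indexed tuple and $(\ell/2-1)^m$ choices for the in-between vertices, giving $O_k(\ell^{k/2} n^{k/2})$ copies after dividing by the cycle symmetry $2\cdot 2m$; for $\ell=4$ this becomes $O_k(n^{k/2})$. For odd $k=2m+1$, the analogous parameterization uses $m+1$ odd-indexed vertices and $m$ common neighbors, with the extra constraint that $v_1 v_{2m+1}$ is an edge of $G$; combining the common-neighbor bound with the Bondy--Simonovits estimate $\ex(n, C_\ell) = O_\ell(n^{1+1/\lfloor \ell/2 \rfloor})$, and choosing the order of summation carefully (summing over the distinguished edge rather than over a free vertex), should give $O_k(\ell^{\lceil k/2 \rceil} n^{\lfloor k/2 \rfloor})$. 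For case 3 ($k$ and $\ell$ both odd, $k<\ell$), a preliminary reduction to a bipartite subgraph is needed before the even-$\ell$ analysis kicks in; each $C_k$ in $G$ has constant probability of surviving under a random $2$-coloring of the vertices, so the reduction costs only a constant factor.

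For the lower bounds the constructions are explicit. In case 1 ($\ell=4$) I would use the polarity graph of a projective plane of order $q$ (or the bipartite incidence graph, when $k$ is even): this is $C_4$-free with $\Theta(n^{3/2})$ edges, and an eigenvalue/trace computation shows it contains $\Theta(n^{k/2})$ copies of $C_k$. In case 2 with $k$ even, the graph $G = K_{\ell/2-1,\, n-\ell/2+1}$ is $C_\ell$-free (its longest cycle has length $\ell-2$) and already contains $\Theta_k(\ell^{k/2} n^{k/2})$ copies of $C_k$ by a direct count. For case 2 with $k$ odd, and for case 3, take $G = K_{a, n-a}$ for an appropriate $a$ (with $a = \ell/2-1$ for even $\ell$, and $a$ the largest integer less than $\ell/3$ for odd $\ell$, forced by the $C_\ell$-freeness analysis) and add a carefully chosen non-bipartite perturbation---a matching, a sparse collection of vertex-disjoint triangles, or a small-side edge---according to the parity of $\ell$. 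A short case analysis on the number of perturbing edges used by a hypothetical $C_\ell$ verifies $C_\ell$-freeness, and direct enumeration yields $\Theta_k(\ell^{\lceil k/2 \rceil} n^{\lfloor k/2 \rfloor})$ copies of $C_k$.

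The hardest part will be to simultaneously obtain the correct $n$-exponent and $\ell$-dependence in the upper bound for odd $k$, and particularly in case 3 where the common-neighbor bound is not directly available; the bipartite reduction must be performed without losing the factor $\ell^{\lceil k/2 \rceil}$. A secondary obstacle is to tune the non-bipartite perturbation in the lower-bound constructions so that the $\ell^{\lceil k/2\rceil}$ factor is realized with the correct polynomial scaling in $\ell$, rather than only with the naive $\ell^2$ coming from $\binom{a}{2}$.
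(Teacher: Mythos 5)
Your upper bound rests on a false lemma. In a $C_\ell$-free graph with $\ell \geq 6$ even, two vertices can have arbitrarily many common neighbours: the only cycles in $K_{2,t}$ are $4$-cycles, so $K_{2,\ell/2}$ does \emph{not} contain $C_\ell$, and indeed $K_{2,n-2}$ is $C_6$-free while its two small-side vertices have $n-2$ common neighbours. The codegree bound you state is genuine only for $\ell=4$. Since every in-between vertex of your parameterization is charged a factor $(\ell/2-1)$ via this bound, the argument collapses for all even $\ell \geq 6$ (case 2) and, through the intended bipartite reduction, for case 3 as well. What survives of the idea in a $C_{2\ell}$-free graph is only an averaged version --- the number of paths of length $2$ through a fixed endpoint is $O(\ell n)$, via Erd\H{o}s--Gallai --- and the paper has to work considerably harder with it: it combines such averaged path counts (Lemma \ref{lem:paths_bound main}) with the Naor--Verstra\"ete skew Zarankiewicz bound for even cycles (Lemma \ref{lem:P3 even_cycle main}), after first deleting small exceptional sets $Y',Z'$ of high-degree vertices, a deletion the paper shows is provably necessary.

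The case-3 reduction is broken twice over on its own: a copy of $C_k$ with $k$ odd can never survive passage to a bipartite subgraph (odd cycles are not $2$-colourable), so the ``constant probability of surviving a random $2$-colouring'' is zero; and a bipartite subgraph of a $C_\ell$-free graph with $\ell$ odd is not ``automatically $C_{\ell-1}$-free'' ($K_{m,m}$ is $C_\ell$-free for every odd $\ell$ yet contains $C_{\ell-1}$). The paper instead uses $C_{2\ell+1}$-freeness directly, noting that the bipartite graphs between consecutive classes of a random partition must avoid even cycles of suitable length (such a cycle plus an arc of a $(V_1,\dots,V_{2k+1})$-cycle closes a $C_{2\ell+1}$), and it needs a separate independent-set argument for the delicate subcase $\ell=k+1$ (Section \ref{sec:consec_odds}). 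Your lower bounds are essentially right for $\ell=4$ and for even $\ell \geq k+2$, but $K_{\ell/2-1,\,n-\ell/2+1}$ contains no $C_k$ at all when $\ell \leq k$, and realizing the $\ell^{\lceil k/2\rceil}$ factor for odd $k$ and $\ell$ by ad hoc perturbations is exactly the part you leave open; the paper resolves both points with partial blowups of $C_k$ in which the complement of a maximum independent set receives $\Theta(\ell)$ vertices in total (Claims \ref{prop:blowup_construction_cycles} and \ref{prop:blowup_construction_general}).
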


Observe that in the above theorem, our bounds are tight also when only $k$ is fixed.
This will be important in some of our applications.

Let us see which cases are not covered by Theorem \ref{thm:main} or by \eqref{eq:C3}.
Observe that if $k$ is even and $\ell$ is odd, or if $k$ and $\ell$ are both odd and $k > \ell$, then a blow-up of $C_k$ does not contain copies of $C_{\ell}$. Therefore, in these
cases\footnote{In these so-called ``dense" cases, while it is trivial to determine the order of magnitude of $\ex(n,C_k,C_{\ell})$, it may be very challenging to get an exact result, as in the case of $\ex(n,C_5,C_3)$ handled in \cite{G,HHKNR}.} we have $\ex(n,C_k,C_{\ell}) = \Theta_k(n^k)$. Thus, the only case remaining is $\ex(n,C_3,C_{2\ell})$, for which we will prove the following.
\begin{proposition}\label{prop:C3_even_cycle}
For every $\ell \geq 2$ we have
$$\Omega \big( \ex(n,\{C_4,C_6,\dots,C_{2\ell}\}) \big) \leq \ex(n,C_3,C_{2\ell}) \leq
O_{\ell}\big( \ex(n,C_{2\ell}) \big).$$
\end{proposition}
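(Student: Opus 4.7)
I would prove the two inequalities by quite different methods.

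\textbf{Upper bound.} Let $G$ be an $n$-vertex $C_{2\ell}$-free graph. For every vertex $u$, the induced subgraph $G[N(u)]$ contains no path on $2\ell-1$ vertices: if $v_1v_2\cdots v_{2\ell-1}$ were such a path inside $N(u)$, then $u,v_1,\ldots,v_{2\ell-1}$ would close (via the edges $uv_1$ and $uv_{2\ell-1}$) into a $C_{2\ell}$ in $G$, a contradiction. By the Erd\H{o}s--Gallai theorem,
\[
e(G[N(u)]) \;\leq\; \tfrac{2\ell-3}{2}\cdot d(u).
\]
Since triangles of $G$ through $u$ correspond bijectively to edges of $G[N(u)]$, summing over $u$ and dividing by $3$ yields
\[
\#\{\text{triangles of }G\} \;\leq\; \tfrac{2\ell-3}{6}\sum_{u}d(u) \;=\; \tfrac{2\ell-3}{3}\,e(G) \;\leq\; O_\ell\bigl(\ex(n,C_{2\ell})\bigr),
\]
using $e(G)\le \ex(n,C_{2\ell})$.

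\textbf{Lower bound.} I would adapt the Gy\H{o}ri--Li construction used for \eqref{eq:C3}. Start with a bipartite $\{C_4,C_6,\ldots,C_{2\ell}\}$-free graph $H$ with parts $A\cup B$ and $m=\Omega(\ex(n,\{C_4,\ldots,C_{2\ell}\}))$ edges, and form $G$ by attaching a triangle gadget that contributes many triangles per edge of $H$. A clean first attempt is to add, for each $e=uv\in E(H)$, a new vertex $w_e$ adjacent to $u$ and $v$, producing $|E(H)|$ triangles. To check $G$ is $C_{2\ell}$-free, consider a putative $C_{2\ell}\subseteq G$, let $t$ count the new vertices it contains (they are non-adjacent, hence separated by $V(H)$-vertices), and contract each new vertex along its two incident edges; the result is a cycle of length $2\ell-t$ in $H$. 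But this cycle is forbidden: for $t=0$ it is $C_{2\ell}\subseteq H$; for $t$ odd it is an odd cycle in a bipartite graph; for $t$ positive and even it has length in $\{4,6,\ldots,2\ell-2\}$, excluded by construction of $H$.

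\textbf{Main obstacle.} The difficulty lies in achieving the claimed triangle count on only $n$ total vertices. The basic gadget above produces $|E(H)|$ triangles on $|V(H)|+|E(H)|$ vertices, and since $|E(H)|$ dominates $|V(H)|$, this yields only $\Theta(n)$ triangles rather than the desired $\Theta(n^{1+1/\ell})$. Overcoming this requires a gadget in which triangle-vertices are shared across many edges of $H$, or -- what seems cleanest -- a direct algebraic construction: the polarity graph of a projective plane gives $\Omega(n^{3/2})$ triangles in a $C_4$-free graph for $\ell=2$, and analogous polarity graphs of generalized polygons handle the remaining small~$\ell$. For general $\ell$ one then needs a $\{C_4,\ldots,C_{2\ell}\}$-free graph with $\Omega(\ex(n,\{C_4,\ldots,C_{2\ell}\}))$ triangles; since such a graph is automatically $C_{2\ell}$-free, this yields the lower bound.
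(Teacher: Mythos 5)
Your upper bound is correct and is exactly the paper's argument (Erd\H{o}s--Gallai applied to each neighbourhood, then summing $e(N(v))$ over $v$).

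The lower bound, however, has a genuine gap that you yourself flag as the ``main obstacle'' and then do not close. The resolution used in the paper (Claim \ref{prop:C3_even_cycle_lower_bound}, following Gy\H{o}ri--Li) is precisely a gadget in which the triangle-vertices are shared: take a maximum $n\times n$ bipartite graph $G'=(A\cup B,E)$ with no $C_4,C_6,\dots,C_{2\ell}$, and replace each vertex $a\in A$ by an \emph{edge} $a_1a_2$, joining both copies to every $G'$-neighbour of $a$. Each edge $ab$ of $G'$ then yields the triangle $a_1a_2b$, so the triangle count is $e(G')\geq\frac12\ex(2n,\{C_4,\dots,C_{2\ell}\})$ while the vertex count is only $3n$ --- the ``extra'' vertex for the edge $ab$ is the second copy of $a$, reused across all edges at $a$. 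This is the step your proposal is missing. Note also that verifying $C_{2\ell}$-freeness of this graph is more delicate than for your one-new-vertex-per-edge gadget: a putative $C_{2\ell}$ may use both copies $a_1,a_2$ of some $a$, in which case contracting yields only a closed walk in $G'$, and one must instead extract from the cycle a path between two distinct $B$-neighbours of $a$ avoiding $a_1,a_2$, which closes a cycle of length at most $2\ell$ in $G'$. Your proposed alternative route --- a $\{C_4,\dots,C_{2\ell}\}$-free graph with $\Omega(\ex(n,\{C_4,\dots,C_{2\ell}\}))$ triangles --- is strictly stronger than what is needed (the constructed graph above contains many $C_4$'s, e.g.\ $a_1,b,a_2,b'$, and is only $C_{2\ell}$-free) and is not known to exist for general $\ell$, since the extremal girth constructions are bipartite; so that route does not repair the argument. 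For $\ell=2$ your polarity-graph suggestion does work and matches the paper's Lemma \ref{lem:C4}.
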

As in the case of \eqref{eq:C3}, the lower and upper bounds in Proposition \ref{prop:C3_even_cycle} are known to be of the same order of magnitude for $\ell \in \{2,3,5\}$.


\subsection{Exact Bounds for Removal Lemmas}\label{subsec:easy}

We would now like to describe a somewhat surprising application of Theorem \ref{thm:main}
related to the famous {\em graph removal lemma}. Let ${\cal P}$ be a monotone graph property, that is,
a property closed under removal of vertices and edges. We say that a graph $G$ is $\varepsilon$-far from
${\cal P}$ if one must remove at least $\varepsilon n^2$ edges to make $G$ satisfy ${\cal P}$.
Recall that the graph removal lemma \cite{RuzsaSz76} states that if $G$ is $\varepsilon$-far from being $H$-free then
$G$ contains at least $\delta_H(\varepsilon)n^h$ copies of $H$, where $h=|V(H)|$ and $\delta_{H}(\varepsilon)>0$.
Observe that this is equivalent to the statement that if $G$ is $\varepsilon$-far from being $H$-free then
a randomly chosen set of vertices $S$ of size $w_H(\varepsilon)$ is such that $G[S]$ (the subgraph of $G$ induced by $S$)
contains a copy of $H$ with probability at least $2/3$.

The latter way of stating the removal lemma naturally leads to the following generalization
that applies even to monotone properties that cannot be characterized by a finite number of forbidden subgraphs:
given a monotone graph property ${\cal P}$ and $\varepsilon >0$,
let $w_{\cal P}(\varepsilon)$ be the smallest integer (if one exists) so that if $G$ is $\varepsilon$-far from satisfying
${\cal P}$, then a random subset $S \subseteq V(G)$ of $w_{\cal P}(\varepsilon)$ vertices is such
that $G[S]$ does not satisfy ${\cal P}$ with probability at least $2/3$. In other words, $w_{\cal P}(\varepsilon)$ tells us how
many vertices we should sample from a graph that is $\varepsilon$-far from ${\cal P}$ in order to find a {\em witness}
(hence the notation $w_{\cal P}(\varepsilon)$) to this fact. For example, a result of R\"odl and Duke \cite{RD},
which resolved a conjecture of Erd\H{o}s \cite{Erdos}, states that
$w_{\cal P}(\varepsilon)$ is well-defined
when ${\cal P}$ is the $k$-colorability property. A significant generalization of the above results was obtained in \cite{AS_monotone},
where is was shown that $w_{\cal P}(\varepsilon)$ is well-defined for every monotone graph property ${\cal P}$.

It is known that there are many properties for which $w_{\cal P}(\varepsilon)=\mbox{poly}(1/\varepsilon)$, see, e.g., \cite{GS}.
However, there are many cases for which the best known bounds for $w_{\cal P}(\varepsilon)$ are anything but tight.
The prime example is the famous triangle removal lemma, which (as we mentioned above) is equivalent to the problem
of estimating $w_{\cal P}(\varepsilon)$ where ${\cal P}$ is the property of being triangle-free. Unfortunately, despite much
effort, the best known bounds for the triangle removal lemma are
\begin{equation}\label{eqremoval}
\left(1/\varepsilon\right)^{c\log 1/\varepsilon} \leq w_{\cal P}(\varepsilon) \leq \mbox{tower}(O(\log 1/\varepsilon))\;,
\end{equation}
where\footnote{$\mbox{tower}(x)$ is the tower function, that is, a tower of exponents of height $x$.}  the lower bound was already obtained in \cite{RuzsaSz76}, and the upper bound was first obtained only a few years ago by Fox \cite{Fox}
(see also \cite{ConlonFo13,MoshkovitzSh16} for alternative proofs). Observe that there is a tower-type gap between the lower and upper bounds in (\ref{eqremoval}). In fact, prior to this work, all known bounds for removal lemmas
were of the above two types, that is, they were either of the form $w_{\cal P}(\varepsilon)=\mbox{poly}(1/\varepsilon)$ or there was (at least)
a tower-type gap between the best known lower and upper bounds. For example, it was not known if there is a property satisfying (say) $w_{\cal P}(\varepsilon)=2^{\Theta(1/\varepsilon)}$ or one satisfying $w_{\cal P}(\varepsilon)=\mbox{tower}(\Theta(1/\varepsilon))$.
Our first application of Theorem \ref{thm:main} fills this gap by showing the following:

\begin{theorem}\label{thm:hierarchy}
There is an absolute constant $c$ such that for every decreasing function
$f\colon (0,1) \rightarrow \mathbb{N}$ satisfying $f(x) \geq 1/x$, there is a monotone graph property ${\cal P}$ satisfying
$
f(\varepsilon) \leq
w_{\cal P}(\varepsilon) \leq
\varepsilon^{-14}f(\varepsilon/c)
$.
\end{theorem}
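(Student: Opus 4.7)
The plan is to build a monotone property $\mathcal{P}=\mathcal{P}_L$, the property of being $C_\ell$-free for every $\ell$ in a carefully chosen infinite set $L=\{\ell_1<\ell_2<\cdots\}$ of even integers. The sequence $(\ell_i)$ will be extracted from $f$ at dyadic scales — roughly $\ell_i$ is an even integer near $f(2^{-i})$ — and the $\ell_i$'s will be made to grow quickly enough (say $\ell_{i+1}\gg\ell_i$) that the different scales do not interfere with each other on small subsets. Thus $L$ encodes the target function $f$.

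For the lower bound $w_{\mathcal{P}_L}(\varepsilon) \geq f(\varepsilon)$, pick $i$ with $\varepsilon \in (2^{-i-1}, 2^{-i}]$, so that $f(\varepsilon) \leq \ell_{i+1}$, and exhibit an $n$-vertex graph $G$ that is $\varepsilon$-far from $\mathcal{P}_L$ yet whose every induced subgraph on fewer than $\ell_{i+1}$ vertices lies in $\mathcal{P}_L$. The construction uses the extremal graphs underlying Theorem~\ref{thm:main} applied with parameter $\ell=\ell_{i+1}$: they contain many copies of $C_{\ell_{i+1}}$ and, by the sharp dependence on $\ell$ in Theorem~\ref{thm:main}, very few copies of any other cycle length appearing in $L$, so any small sample can only contain short cycles none of which belong to $L$. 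For the upper bound $w_{\mathcal{P}_L}(\varepsilon) \leq \varepsilon^{-14} f(\varepsilon/c)$, sample $S$ of size $w:=\varepsilon^{-14}f(\varepsilon/c)$ uniformly from $V(G)$; a concentration argument (valid since $w\gg\varepsilon^{-2}$) gives $|E(G[S])|\ge \tfrac12\varepsilon w^2$ with high probability, and a standard convexity argument then produces at least $\Omega_k((\varepsilon w)^k)$ copies of $C_k$ in $G[S]$ for an appropriate small even $k$. Letting $\ell^*$ be the largest element of $L$ below $f(\varepsilon/c)$ (so $\ell^*=\Theta(f(\varepsilon/c))$ by the dyadic construction of $L$), Theorem~\ref{thm:main} gives that any $w$-vertex graph with more than $\Theta_k((\ell^*)^{\lceil k/2\rceil}w^{\lfloor k/2\rfloor})$ copies of $C_k$ contains a $C_{\ell^*}$; by our choice of $w$ this threshold is exceeded, so $G[S]$ contains a copy of $C_{\ell^*}\in L$, and hence $G[S]\notin\mathcal{P}_L$.

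The main obstacle is the coordination between different scales: the single set $L$ must simultaneously deliver matching upper and lower bounds at every $\varepsilon$. On the one hand, this requires that the extremal graph used in the lower-bound at scale $\varepsilon_i$ contain no spurious cycle of length $\ell_j$ with $j\neq i+1$ visible on small subsets, which is controlled precisely by the sharp quantitative form of Theorem~\ref{thm:main} describing the long-cycle profile of $C_k$-rich graphs. On the other hand, matching $\ex(w,C_k,C_{\ell^*})$ against the supersaturation count of $C_k$'s forced by the density of $G[S]$ needs the tight $\ell^*$-dependence of Theorem~\ref{thm:main}; the polynomial factor $\varepsilon^{-14}$ is exactly the slack absorbed by the concentration step, the counting inequality, and the edge case where $\ell^*$ is small.
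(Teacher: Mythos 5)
Your overall architecture matches the paper's: take $\mathcal{P}$ to be $L$-freeness for a rapidly growing sequence $L$ of cycle lengths encoding $f$, get the lower bound from a single graph that is far from $\mathcal{P}$ but has no small witness, and get the upper bound by finding many short cycles in a sample and invoking the tight $\ell$-dependence of $\ex(n,C_k,C_\ell)$ from Theorem \ref{thm:main}. However, your choice of \emph{even} cycle lengths for $L$ is a fatal error, and the lower bound cannot be repaired within that choice. For your lower-bound graph $G$, every induced subgraph on fewer than $\ell_{i+1}$ vertices must satisfy $\mathcal{P}_L$; in particular $G$ must contain no copy of $C_{\ell_1}$ at all. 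But $C_{\ell_1}$ is an even cycle, so by the Bondy--Simonovits theorem (Theorem \ref{thm:even_cycle}) $e(G)=O(n^{1+2/\ell_1})=o(n^2)$, hence $G$ is $o(1)$-close to the empty graph and therefore to $\mathcal{P}_L$ --- it cannot be $\varepsilon$-far for any fixed $\varepsilon$ once $n$ is large. More generally, as the paper notes in the introduction, Alon's result \cite{Alon_Subgraphs} gives $w_L(\varepsilon)=\poly(1/\varepsilon)$ whenever $L$ contains even an single even integer (if $G$ is $\varepsilon$-far from $L$-free it must be $\varepsilon/2$-far from $C_{\ell_1}$-free, since a $C_{\ell_1}$-free graph is almost empty, and the removal lemma for bipartite $H$ is polynomial), so a superpolynomial lower bound $w_L(\varepsilon)\ge f(\varepsilon)$ is simply false for your $L$.

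The paper's proof makes $L$ a sequence of \emph{odd} integers precisely to escape this: the $\frac{n}{\ell_i+2}$-blowup of the odd cycle $C_{\ell_i+2}$ is dense, admits no homomorphism from any odd $C_k$ with $k\le\ell_i$ (so all subgraphs on fewer than $\ell_{i+1}$ vertices are $L$-free), and by Lemma \ref{lem:blowup_distance} is $\frac{1}{2(\ell_i+2)^2}$-far from $C_{\ell_{i+1}}$-freeness; the recursion $\ell_{i+1}=2f(\tfrac{1}{2(\ell_i+2)^2})+1$ ties the sequence to these distance parameters rather than to dyadic scales. Oddness also changes the upper bound: your plan to extract many copies of a short \emph{even} cycle from the edge density of $G[S]$ cannot produce a witness, because the witnesses must now be odd cycles and a dense sample can be bipartite. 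The paper instead observes that $\varepsilon$-far from $L$-free implies $\varepsilon$-far from bipartite, transfers this to the sample via the max-cut sampling theorem (Theorem \ref{thm:ADKK}), applies Koml\'os's theorem (Theorem \ref{thm:Komlos}) to find an odd cycle of length $s=O(\varepsilon^{-1/2})$, amplifies to $(\varepsilon^6 q/c_1)^s$ copies of $C_s$ (Lemma \ref{cor:k-cycle_sample}), and only then compares against $\ex(q,C_s,C_{\ell_{i+1}})$ via Lemma \ref{cor:ex_odd_cycles}. The $\varepsilon^{-14}$ slack is consumed by these sampling steps, not merely by a second-moment edge count.
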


As immediate corollaries of the above theorem we see that there is a monotone property ${\cal P}$ so that the bounds
for the ${\cal P}$ removal lemma satisfy $w_{\cal P}(\varepsilon)=2^{\Theta(1/\varepsilon)}$ (or analogously one satisfying $w_{\cal P}(\varepsilon)=\mbox{tower}(\Theta(1/\varepsilon))$). We again stress that this is the first example
of a removal lemma with a tight super-polynomial bound.

We now turn to the second application of Theorem \ref{thm:main}.
In what follows, we use $w_k(\varepsilon)$ instead of $w_{\cal P}(\varepsilon)$, where ${\cal P}$ is the $k$-colorability property.
The result of \cite{RD} mentioned above which proved that $w_k(\varepsilon)$ is well-defined relied on the regularity lemma \cite{Szemeredi}, and thus supplied only tower-type bounds.
A significantly better bound was obtained by Goldreich, Goldwasser and Ron \cite{GGR} who proved that $w_k(\varepsilon)=\mbox{poly}(1/\varepsilon)$. In a recent breakthrough, Sohler \cite{Sohler} obtained the nearly tight bound
$w_k(\varepsilon)=\tilde{\Theta}(1/\varepsilon)$.

Given a set of integers $L$, let us say that a graph is $L$-free if it is $C_{\ell}$-free for every $\ell \in L$.
The result of \cite{GGR} stating that $w_2(\varepsilon)=\mbox{poly}(1/\varepsilon)$ is then equivalent to the
statement that if $L$ consists of {\em all} odd integers then\footnote{We slightly abuse the notation here by using $L$ in the notation $w_{L}$ also for the {\em property} of being $L$-free.} $w_{L}(\varepsilon)=\mbox{poly}(1/\varepsilon)$.
Another related result is due to Alon \cite{Alon_Subgraphs}
who proved that $w_{L}(\varepsilon)=\mbox{poly}(1/\varepsilon)$ if $L$ contains at least one even integer,
and that $w_{L}(\varepsilon)$ is super-polynomial whenever $L$ is a {\em finite} set of odd integers.
Alon and the second author \cite{AS_monotone} asked if one can extend the above results by characterizing the sets of integers
$L$ for which $w_{L}(\varepsilon)=\mbox{poly}(1/\varepsilon)$.
Our second application of Theorem \ref{thm:main} solves all the cases not handled by previous results.

\begin{theorem}\label{thm:cycle_sequence_charac}
Let $L=\{\ell_1,\ell_2,\ldots\}$ be an infinite increasing sequence of odd integers.
Then
\begin{equation*}\label{eq:charac_condition}
w_{L}(\varepsilon)=\mathrm{poly}(1/\varepsilon) ~~~~\mbox{if and only if}~~~~ \limsup_{j \longrightarrow \infty}{\frac{\log \ell_{j+1}}{\log \ell_j}} < \infty.
\end{equation*}
\end{theorem}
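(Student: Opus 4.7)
The plan is to prove both directions of the claimed equivalence. Theorem~\ref{thm:main} drives the sufficient direction, while the necessary direction uses a Ruzsa--Szemer\'edi--type construction in the spirit of Alon~\cite{Alon_Subgraphs}.

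Sufficient direction ($\limsup<\infty\Rightarrow w_L=\mathrm{poly}$): assume $\ell_{j+1}\leq \ell_j^C$ for all large $j$. Given $\varepsilon>0$, let $\ell^*=\ell_{j^*}$ be the smallest element of $L$ with $\ell^*\geq M/\varepsilon$ for a sufficiently large constant $M$. Then $\ell^*\leq (M/\varepsilon)^C=\mathrm{poly}(1/\varepsilon)$, and $L':=L\cap[3,\ell^*]$ has only $O(\log\log(1/\varepsilon))$ elements. The goal is to establish the quantitative removal statement: every $G$ that is $\varepsilon$-far from $L$-free contains at least $\mathrm{poly}(\varepsilon)\cdot n^{\ell_j}$ copies of $C_{\ell_j}$ for some $\ell_j\in L'$. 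A standard sampling argument then converts this into $w_L(\varepsilon)=\mathrm{poly}(1/\varepsilon)$. I would prove the removal statement contrapositively: supposing $G$ has at most $\delta\cdot n^{\ell_j}$ copies of $C_{\ell_j}$ for each $\ell_j\in L'$ (with $\delta=\mathrm{poly}(\varepsilon)$), use Theorem~\ref{thm:main} in its supersaturation form to force the non-bipartite part of $G$ to be small at every scale in $L'$; summing these losses over the doubly-logarithmically many scales shows that $G$ is $(\varepsilon/2)$-close to bipartite, hence $(\varepsilon/2)$-close to $L$-free, contradicting the assumption.

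Necessary direction ($\limsup=\infty\Rightarrow w_L$ super-poly): given any super-polynomial $f$, I would produce a witness by adapting Alon's construction for finite odd families. Fix a large constant $c=c(f)$; by the $\limsup$ hypothesis, pick $j$ with $\ell_{j+1}\geq \ell_j^{c}$ and set $n\in[\ell_j^{c},\ell_{j+1})$. Since $n<\ell_{j+1}$, any $n$-vertex graph vacuously avoids $C_{\ell_i}$ for all $i>j$, so avoiding $L$-cycles reduces to avoiding the finite family $\{C_{\ell_1},\ldots,C_{\ell_j}\}$. Applying (an extension of) the Ruzsa--Szemer\'edi--type construction of~\cite{Alon_Subgraphs} for this finite family yields an $n$-vertex graph $G$ that is $\Omega(1)$-far from $L$-free yet contains only sub-polynomially many copies of each $C_{\ell_i}$; the usual conversion then gives $w_L(\varepsilon)\geq f(1/\varepsilon)$ at the witnessing~$\varepsilon$.

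The main obstacle I anticipate is the supersaturation step in the sufficient direction: converting the extremal count of Theorem~\ref{thm:main} into a genuine ``few copies of $C_{\ell_j}$ $\Rightarrow$ close to bipartite'' estimate, and then interleaving these estimates across the $O(\log\log(1/\varepsilon))$ scales in $L'$ without incurring a super-polynomial blow-up in the cumulative loss. A secondary obstacle is extending Alon's single-cycle construction to simultaneously suppress every short $C_{\ell_i}$ whose length lies in $L\cap[3,\ell_j]$, which I would expect to resolve via a careful layered use of Behrend-type progression-free sets, one layer per element of this finite family.
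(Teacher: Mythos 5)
Your high-level split into the two directions is reasonable, and you correctly identify Theorem \ref{thm:main} as the engine of the sufficiency direction, but both halves of your plan have genuine gaps.

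For the sufficient direction, your proposed intermediate removal statement and the route to it do not work. The implication ``few copies of $C_{\ell_j}$ for every $\ell_j\in L\cap[3,\ell^*]$ implies close to bipartite'' cannot be extracted from Theorem \ref{thm:main}: that theorem bounds the number of $C_k$'s in a $C_\ell$-free graph, and no ``summing of losses over the scales in $L$'' converts a paucity of cycles whose lengths lie in $L$ into a bipartiteness statement, because a graph far from bipartite need not contain many (or any) short cycles whose lengths lie in $L$. The missing idea is Koml\'os's theorem: a graph $\varepsilon$-far from bipartite contains an odd cycle of some length $s=O(\varepsilon^{-1/2})$, where $s$ is \emph{not necessarily in} $L$; a sampling/amplification argument (Lemmas \ref{thm:bipartite_distance} and \ref{cor:k-cycle_sample}) upgrades this to $(\poly(\varepsilon)\,q)^{s}$ copies of $C_s$ in a typical $q$-vertex sample; and only then does Theorem \ref{thm:main} enter (via Lemma \ref{cor:ex_odd_cycles}), to say that this count exceeds $\ex(q,C_s,C_{\ell_{i+1}})$ and hence forces a single copy of $C_{\ell_{i+1}}$, where $\ell_{i+1}$ is the first element of $L$ above the Koml\'os threshold $c_1\varepsilon^{-1/2}$. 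The resulting sample size is $\poly(1/\varepsilon)\cdot\ell_{i+1}$, and this is exactly where $\limsup<\infty$ is used: $\ell_{i+1}\leq \ell_i^{d}\leq\poly(1/\varepsilon)$. Note also that your removal statement is false as written: a blowup of $C_{\ell_i+2}$ is $\Theta(\ell_i^{-2})$-far from $L$-free yet has only $\varepsilon^{\Theta(\ell_j)}n^{\ell_j}$ copies of $C_{\ell_j}$, not $\poly(\varepsilon)\cdot n^{\ell_j}$ with an exponent independent of $\ell_j$.

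For the necessary direction your construction is both overcomplicated and, as described, invalid. You place the host graph on $n<\ell_{j+1}$ vertices, but $w_L(\varepsilon)$ quantifies over all sufficiently large hosts, so a bounded-size example lower-bounds nothing; moreover, a graph that is $\Omega(1)$-far from $L$-free with sub-polynomially many copies of every witness, if it existed at arbitrarily large orders, would contradict the finiteness of $w_L$ at that constant distance. No Ruzsa--Szemer\'edi or Behrend machinery is needed here. The paper simply takes the balanced blowup of $C_{\ell_i+2}$ on arbitrarily many vertices: by Lemma \ref{lem:blowup_distance} it is $\frac{1}{2(\ell_i+2)^2}$-far from $C_{\ell_{i+1}}$-free (hence from $L$-free), while every induced subgraph on fewer than $\ell_{i+1}$ vertices is $L$-free, since odd cycles of length at most $\ell_i$ admit no homomorphism into $C_{\ell_i+2}$ and any cycle of $L$ present in the blowup has length at least $\ell_{i+1}$. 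This gives $w_L\big(\frac{1}{2(\ell_i+2)^2}\big)\geq\ell_{i+1}$, which is super-polynomial in $1/\varepsilon_i$ infinitely often exactly when the $\limsup$ is infinite.
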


By the above theorem, as long as $\ell_j$ does not grow faster than $2^{2^{j}}$, one can get a polynomial bound
on $w_{L}(\varepsilon)$, while for any (significantly) faster-growing $\ell_j$ one cannot get such a bound.

Let us give another perspective on the above result.
As mentioned earlier, it was shown in \cite{AS_monotone} that
$w_{\cal P}(\varepsilon)$ is well-defined for every monotone property ${\cal P}$.
Unfortunately, the general bounds obtained in \cite{AS_monotone} for $w_{\cal P}(\varepsilon)$ are of tower-type (or even worse).
A natural problem, first raised already in \cite{AS_monotone} and later also by  Goldreich \cite{Goldreich1} and Alon and Fox \cite{AF} is
to characterize the properties for which $w_{\cal P}(\varepsilon)=\mbox{poly}(1/\varepsilon)$. Since this problem seemed (and still seems) to be out of reach,
\cite{AS_monotone} asked if one can at least solve a (very) special case of this problem by characterizing the sets $L$ for which
$w_{L}(\varepsilon)=\mathrm{poly}(1/\varepsilon)$. This problem is resolved by Theorem \ref{thm:cycle_sequence_charac}.

\subsection{Applications to Graph Property Testing}

We now describe two applications of Theorem \ref{thm:main} in the area of graph property testing (see \cite{Goldreich1}
for more background and references on the subject). In the setting we will consider here, which was first introduced in \cite{GGR}, one assumes that a
(randomized) algorithm can sample a set of vertices $S$
from an input graph $G$, and based on the graph induced by this set should be able to distinguish\footnote{It was shown in \cite{GT} that one can transform every $\varepsilon$-tester into an $\varepsilon$-tester that works in this manner.} with probability at least
$2/3$ between the case that $G$ satisfies a property ${\cal P}$ and the case that $G$ is $\varepsilon$-far from satisfying
${\cal P}$. Such an algorithm is called an $\varepsilon$-tester for ${\cal P}$.
We denote by $q=q_{\cal P}(\varepsilon,n)$ the smallest integer for which there is an $\varepsilon$-tester for ${\cal P}$
that works by randomly selecting a set of vertices $S$ of size $q$ from $n$-vertex graphs.
The surprising fact is that for many natural properties ${\cal P}$ there is a function $q_{\cal P}(\varepsilon)$ satisfying
$q_{\cal P}(\varepsilon,n) \leq q_{\cal P}(\varepsilon)$ for every $n$, that is, there is an $\varepsilon$-tester for ${\cal P}$ which inspects an induced subgraph
of size that depends only on $\varepsilon$ and not on $|V(G)|$. The function $q_{\cal P}(\varepsilon)$ is sometimes referred to as the $2$-sided error\footnote{That is, the tester is allowed to err in both direction, i.e. reject graphs satisfying ${\cal P}$ with small probability as well as accept graphs that are $\varepsilon$-far from ${\cal P}$ with small probability.} query complexity
of the optimal\footnote{In the sense that it samples the smallest set of vertices $S$.} $\varepsilon$-tester of ${\cal P}$.

Observe that for a monotone property we have $q_{\cal P}(\varepsilon) \leq w_{\cal P}(\varepsilon)$, since the definition
of $w_{\cal P}(\varepsilon)$ guarantees that an $\varepsilon$-tester for ${\cal P}$ can simply sample a set $S$ of\footnote{The result of \cite{AS_monotone} mentioned above implies that $w_{\cal P}(\varepsilon)$ is well-defined for every ${\cal P}$.} $w_{\cal P}(\varepsilon)$
vertices and accept the input if and only if the graph induced by $S$ satisfies ${\cal P}$.
Note that while $\varepsilon$-testers might have $2$-sided error, such an algorithm for a monotone property has $1$-sided error, that is, it accepts graphs satisfying ${\cal P}$ with
probability $1$ (and rejects those that are $\varepsilon$-far from $\mathcal{P}$ with probability at least $2/3$).
It is easy to see that a $1$-sided tester of a monotone property ${\cal P}$ cannot reject an input if $G[S]$ satisfies
${\cal P}$. Hence $w_{\cal P}(\varepsilon)$ actually equals the query complexity of the optimal $1$-sided tester for ${\cal P}$.

As discussed in \cite{Goldreich1}, numerous problems of the above type have been studied in the past. Somewhat surprisingly,
all results in this area supplied either polynomial  or tower-type bounds for $q_{\cal P}(\varepsilon)$.
Goldreich \cite{Goldreich,Goldreich1} thus asked to exhibit a graph property with query complexity $q_{\cal P}(\varepsilon)=2^{\Theta(1/\varepsilon)}$.
The discussion in the previous paragraph together with the remark following Theorem \ref{thm:hierarchy}, supply a partial answer to this question
by exhibiting a property whose $1$-sided error query complexity is $2^{\Theta(1/\varepsilon)}$.

\begin{theorem}\label{corogold}
There is a graph property ${\cal P}$ with $1$-sided error query complexity
$w_{\cal P}(\varepsilon)=2^{\Theta(1/\varepsilon)}$\;.
\end{theorem}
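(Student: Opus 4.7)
The plan is to derive Theorem~\ref{corogold} as an essentially immediate consequence of Theorem~\ref{thm:hierarchy}, combined with the observation (already made in the paragraph preceding Theorem~\ref{corogold}) that for any monotone graph property $\mathcal{P}$ the quantity $w_{\mathcal{P}}(\varepsilon)$ equals the query complexity of the optimal $1$-sided tester for $\mathcal{P}$. Thus it suffices to exhibit a monotone property whose sample-witness complexity $w_{\mathcal{P}}(\varepsilon)$ is $2^{\Theta(1/\varepsilon)}$.

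To produce such a property, I would apply Theorem~\ref{thm:hierarchy} with the particular choice
\[
f(\varepsilon) := 2^{\lceil 1/\varepsilon \rceil}.
\]
This $f$ is a decreasing function from $(0,1)$ into $\mathbb{N}$ and clearly satisfies $f(\varepsilon) \geq 1/\varepsilon$, so the hypothesis of Theorem~\ref{thm:hierarchy} is met. The conclusion yields a monotone graph property $\mathcal{P}$ for which
\[
2^{\lceil 1/\varepsilon \rceil} \;\leq\; w_{\mathcal{P}}(\varepsilon) \;\leq\; \varepsilon^{-14}\cdot 2^{\lceil c/\varepsilon \rceil},
\]
where $c$ is the absolute constant supplied by Theorem~\ref{thm:hierarchy}.

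The final step is a trivial asymptotic cleanup: the lower bound is already $2^{\Omega(1/\varepsilon)}$, and for the upper bound we absorb the polynomial factor into the exponential by noting that
\[
\varepsilon^{-14}\cdot 2^{\lceil c/\varepsilon\rceil} \;=\; 2^{\lceil c/\varepsilon\rceil + 14\log_2(1/\varepsilon)} \;=\; 2^{O(1/\varepsilon)}.
\]
Combining both estimates gives $w_{\mathcal{P}}(\varepsilon) = 2^{\Theta(1/\varepsilon)}$, and then the preliminary identification of $w_{\mathcal{P}}(\varepsilon)$ with the $1$-sided error query complexity of $\mathcal{P}$ completes the proof.

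Since the statement is an immediate corollary, there is no genuine obstacle beyond correctly invoking Theorem~\ref{thm:hierarchy}; the only thing one has to be careful about is verifying that the chosen $f$ satisfies both the monotonicity/integrality conditions and the bound $f(\varepsilon)\geq 1/\varepsilon$, so that the theorem can be applied without modification.
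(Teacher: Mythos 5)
Your proposal is correct and is exactly the paper's argument: the paper derives Theorem \ref{corogold} as an immediate corollary of Theorem \ref{thm:hierarchy} applied to $f(\varepsilon)=2^{\Theta(1/\varepsilon)}$, together with the observation that $w_{\cal P}(\varepsilon)$ equals the optimal $1$-sided error query complexity for monotone ${\cal P}$. The absorption of the $\varepsilon^{-14}$ factor into the exponential is the same trivial cleanup the paper implicitly performs.
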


Theorem \ref{thm:hierarchy} of course shows that for every sufficiently fast-growing $f$, there is a graph property
with $1$-sided error query complexity $w_{\cal P}(\varepsilon)=f(\Theta(\varepsilon))$. This can be considered
a {\em hierarchy theorem} for the query complexity of $1$-sided error $\varepsilon$-testers, somewhat reminiscent
of the famous time/space hierarchy theorems in computational complexity theory.

We now turn to the last application of Theorem \ref{thm:main}.
It is natural to ask whether $q_{\cal P}(\varepsilon)$ can be significantly smaller that $w_{\cal P}(\varepsilon)$,
that is, if $2$-sided testers have any advantage over $1$-sided ones.
The simple answer is of course yes; for example, if ${\cal P}$ is the property of having edge density at least $1/4$ (i.e. having at least $n^2/4$ edges)
then it is easy to see
that $q_{\cal P}(\varepsilon) \leq \poly(1/\varepsilon)$ as one can just estimate the edge
density of the input. On the other hand, it is also easy to see that ${\cal P}$ is not testable with $1$-sided error using a number
of queries that is independent of $n$. It is thus more natural to
restrict ourselves to graph properties that {\em can} be tested with $1$-sided error, and ask:
to what extent are $2$-sided testers more powerful than $1$-sided testers for monotone properties?

It is (perhaps) natural to guess that at least for monotone
properties ${\cal P}$, $2$-sided testers do not have any advantage over $1$-sided testers, the explanation being
that the only way one can be convinced that a graph is far from satisfying a monotone property ${\cal P}$ is by finding a witness to
this fact in the form of a subgraph not satisfying ${\cal P}$. As Theorem \ref{thm:two_sided_test} below shows, this intuition turns out to be false in a very strong sense.
This theorem implies that $2$-sided testers can be {\em arbitrarily} more efficient than $1$-sided testers,
even for monotone graph properties. Prior to this work, it was not even known that $2$-sided testers can be super-polynomially
stronger than $1$-sided testers.

\begin{theorem}\label{thm:two_sided_test}
For every decreasing function $f \colon (0,1) \rightarrow \mathbb{N}$ satisfying
$f(x) \geq 1/x$, there is a monotone graph property ${\cal P}$ so that
\begin{itemize}
\item ${\cal P}$ has $1$-sided error query complexity $w_{\cal P}(\varepsilon) \geq f(\varepsilon)$.
\item ${\cal P}$ has $2$-sided error query complexity $q_{\cal P}(n,\varepsilon) = \poly(1/\varepsilon)$ for every $n \geq n_0(\varepsilon)$.
\end{itemize}
\end{theorem}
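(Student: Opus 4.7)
My plan is to take $\mathcal{P}$ to be the $L$-freeness property, where $L$ is the infinite set of odd integers produced by Theorem~\ref{thm:hierarchy} applied to $f$; this immediately gives the first bullet, $w_\mathcal{P}(\varepsilon)\ge f(\varepsilon)$. The construction moreover forces the smallest element $\ell_1\in L$ to be at least $f(\varepsilon)$, which is larger than any fixed polynomial in $1/\varepsilon$ I will need below.

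The two-sided tester is of the Sohler type: sample $S\subseteq V(G)$ uniformly of size $s=\tilde O(1/\varepsilon)$, and accept iff $G[S]$ is bipartite; this has query complexity $\poly(1/\varepsilon)$. For graphs $\varepsilon$-far from $\mathcal{P}$, since every bipartite graph is $L$-free, they are also $\varepsilon$-far from bipartite, and Sohler's tester \cite{Sohler} shows $G[S]$ is non-bipartite with probability at least $2/3$, so the tester rejects. For graphs $G\in\mathcal{P}$, the probability that $G[S]$ contains any odd cycle is at most
\[
\sum_{\substack{3\le k\le s\\ k\text{ odd}}}\#C_k(G)\cdot (s/n)^k,
\]
and Theorem~\ref{thm:main} (for odd $k\ge 5$) combined with \eqref{eq:C3} (for $k=3$) gives $\#C_k(G)\le\ex(n,C_k,C_{\ell_1})=O_k(\ell_1^{\lceil k/2\rceil}n^{\lfloor k/2\rfloor})$ for every odd $k<\ell_1$. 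Once $n\ge n_0(\varepsilon)$, chosen as a suitable polynomial in $\ell_1$ and $1/\varepsilon$, each summand is at most $1/(3s)$, so the full sum is at most $1/3$ and the tester accepts with probability at least $2/3$.

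The main obstacle is the parameter matching: the Tur\'an-type upper bound is only effective in the range $k<\ell_1$, whereas Sohler's tester is only guaranteed to find odd cycles of length up to $s$. The critical hypothesis is $\ell_1\gg s$, which follows from $\ell_1\ge f(\varepsilon)\ge 1/\varepsilon$ and which ensures that the range of $k$'s where Theorem~\ref{thm:main} applies fully covers the range of $k$'s relevant for bipartiteness testing. Once this alignment is in place, verifying the quantitative estimates and the explicit form of $n_0(\varepsilon)$ is routine.
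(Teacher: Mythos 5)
There is a genuine gap in the completeness analysis, and it sits exactly where you flagged ``the critical hypothesis.'' The property $\mathcal{P}$ must be a single fixed monotone property, so $L$ is a fixed infinite set chosen once and for all; in the construction of Theorem \ref{thm:hierarchy} one has $\ell_1=3$, and in any case no fixed $\ell_1$ can satisfy $\ell_1\ge f(\varepsilon)$ for all $\varepsilon$, since $f(\varepsilon)\to\infty$. Consequently your claimed bound $\#C_k(G)\le\ex(n,C_k,C_{\ell_1})$ ``for every odd $k<\ell_1$'' covers no $k$ at all, and the alignment $\ell_1\gg s$ that you rely on to match the Tur\'an range with the bipartiteness-testing range is simply false. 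An $L$-free graph can contain many odd cycles of every odd length \emph{not} in $L$ (e.g.\ a blowup of $C_{\ell_i+2}$ on $n<\ell_{i+1}$ vertices), so the union bound must be run, for each odd $k\le s$ with $k\notin L$, against $\ex(n,C_k,C_{\ell})$ where $\ell$ is the \emph{smallest element of $L$ exceeding $k$} --- not against $C_{\ell_1}$. This is repairable: Lemma \ref{cor:ex_odd_cycles} gives $\#C_k(G)=O_k(\ell^{(k+1)/2}n^{(k-1)/2})$, the coefficient $\ell^{(k+1)/2}$ may be astronomically large (e.g.\ tower-type if $f$ is), but it is determined by $\varepsilon$ and $L$ alone, so taking $n_0(\varepsilon)\ge\poly(1/\varepsilon)\cdot\ell_{j^*+1}$, where $\ell_{j^*+1}$ is the least element of $L$ exceeding the sample size $s$, makes every summand at most $1/(3s)$. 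With that corrected bookkeeping your tester (accept iff the sample is bipartite) does work, but as written the key quantitative step is unjustified.

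It is also worth noting that the paper takes a slightly different route that sidesteps the union bound over cycle lengths. Its tester samples $q=c\varepsilon^{-5}$ vertices and accepts iff the sample is $\frac{\varepsilon}{2}$-\emph{close} to bipartite (rather than exactly bipartite). Soundness then follows from the max-cut sampling lemma (Lemma \ref{thm:bipartite_distance}), and completeness follows from Lemma \ref{lem:o(1)-close to bipartite} --- every $L$-free graph on sufficiently many vertices is $\frac{\varepsilon}{12}$-close to bipartite --- together with a Markov bound on how many of the few ``non-bipartite'' edges land in the sample. That closeness lemma is itself proved via the contrapositive of Lemma \ref{lem:far_from_bipartite_many_L_cycles}, i.e.\ via the same Tur\'an bounds you invoke, so the two arguments rest on the same extremal input; the paper's acceptance criterion is just more forgiving, since the sample is allowed to contain odd cycles as long as they can be destroyed by deleting few edges.
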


Let us give another perspective on the above theorem.
Observe that the fact that a property can be $\varepsilon$-tested using query complexity $q_{\cal P}(\varepsilon)$ is equivalent to the assertion
that the distribution of induced subgraphs on $q_{\cal P}(\varepsilon)$ vertices obtained by drawing
$q_{\cal P}(\varepsilon)$ vertices from a graph in ${\cal P}$ is distinguishable\footnote{This interpretation is reminiscent of the way one studies limits of dense and sparse graph sequences, see \cite{Lovasz}.} from the distribution obtained by drawing these vertices from a graph that is $\varepsilon$-far from ${\cal P}$. So the above theorem implies that there is a monotone graph property ${\cal P}$ and a graph $G$ that is $\varepsilon$-far from ${\cal P}$ so that even though {\em all} subsets of vertices of $G$ of size (say) $2^{1/\varepsilon}$ do satisfy ${\cal P}$, the distribution of induced subgraphs on $\mbox{poly}(1/\varepsilon)$ vertices drawn from $G$ is distinguishable from the one
drawn from a graph satisfying ${\cal P}$. In other words, we can detect the fact that $G$ does not satisfy ${\cal P}$ without actually finding
a proof of this fact.

\paragraph{Paper organization:}

In Section \ref{sec:consec_odds} we give a tight upper bound for $\ex(n,C_{2k+1},C_{2k+3})$ where $k \geq 2$, which turns out to require
a different argument than the one needed to handle all other cases of Theorem \ref{thm:main}. This problem appears to be significantly harder
than $\ex(n,C_{3},C_{5})$ which was resolved by Bollob\'{a}s--Gy\H{o}ri \cite{BG}. This is best evidenced by the fact
that while $\ex(n,C_{3},C_{5})=\Theta(n^{3/2})$, for the general problem we have $\ex(n,C_{2k+1},C_{2k+3})=\Theta_k(n^k)$ for $k \geq 2$.
Section \ref{sec:proof main} contains the proof of Theorem \ref{thm:main} and Proposition \ref{prop:C3_even_cycle}. In this section we also prove a tight bound $\mbox{ex}(n,P_k,C_{\ell})$ for all values of
$k \geq 2$ and $\ell$, where $P_k$ is the path with $k$ edges (see Theorem \ref{thm:ex_path_cycle}).
In Section \ref{sec:test} we apply our bounds for $\mbox{ex}(n,C_k,C_{\ell})$ in order to prove Theorems \ref{thm:hierarchy}, \ref{thm:cycle_sequence_charac} and \ref{thm:two_sided_test}.
Lemma \ref{lem:P3 even_cycle main},
which is the key lemma in the proof of Theorem \ref{thm:main}, is proved in Section \ref{sec:key}. The main tool used in its proof is a bound for the skew version of the even-cycle Tur\'an problem,
due to Naor and Verstra\"{e}te \cite{NV}. 
Finally, in Section \ref{sec:lower} we prove the lower bounds in Theorem \ref{thm:main} and Proposition \ref{prop:C3_even_cycle}.

The dependence of our bounds on $\ell$ is important due to the way we apply them in Section \ref{sec:test}. We made little effort, however, to optimize their dependence on $k$.
Finally, since in most arguments the parity of the cycle lengths will be important, we will use $2k$ or $2k+1$ (and analogously $2\ell$ or $2\ell+1$)
to denote the lengths of the cycles.

\section{The Case $\ex(n,C_{2k+1},C_{2k+3})$}\label{sec:consec_odds}
In this section we give a tight upper bound for $\ex(n,C_{2k+1},C_{2k+3})$ when $k \geq 2$.
Let us introduce some notation that we will use throughout the paper.
For a graph $G$ and disjoint sets $X,Y \subseteq V(G)$, we denote by $E(X,Y)$ the set of edges with one endpoint in $X$ and one endpoint in $Y$, and set
$e(X,Y) = |E(X,Y)|$. For $v \in V(G)$ and $X \subseteq V(G)$, denote $N_X(v) = \{x \in X : (v,x) \in E(G)\}$.

Let $U_1,\dots,U_s$ be disjoint vertex sets in a graph.
A {\em $(U_1,\dots,U_s)$-path} is a path $u_1,\dots,u_s$ with $u_i \in U_i$.
Similarly, a {\em $(U_1,\dots,U_s)$-cycle} is a cycle $u_1,\dots,u_s,u_1$ with $u_i \in U_i$.
Let $p(U_1,\dots,U_s)$ denote the number of $(U_1,\dots,U_s)$-paths and let $c(U_1,\dots,U_s)$ denote the number of $(U_1,\dots,U_s)$-cycles. We denote by $P_k$ the path of length $k$, where the length of a path is the number of edges in it. We will frequently use the following simple averaging argument.
\begin{claim}\label{prop:random_partition}
	Let $G$ be a graph. If for every partition $V(G) = U_1 \cup \dots \cup U_k$ it holds that $c(U_1,\dots,U_k) \leq r$, then the number copies of $C_k$ in $G$ is at most
	$\frac{1}{2}k^{k-1} r$. Similarly, if for every partition $V(G) = U_1 \cup \dots \cup U_k$ it holds that $p(U_1,\dots,U_k) \leq r$, then the number of copies of $P_{k-1}$ in $G$ is at most $\frac{1}{2}k^k r$.
\end{claim}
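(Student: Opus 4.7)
The plan is a direct averaging (linearity-of-expectation) argument. Consider the uniformly random ordered partition $V(G) = U_1 \cup \dots \cup U_k$ obtained by placing each vertex of $G$ independently and uniformly into one of the $k$ parts. I will compute $\mathbb{E}[c(U_1, \ldots, U_k)]$ and $\mathbb{E}[p(U_1, \ldots, U_k)]$ exactly, and then invoke the hypothesis that these quantities are deterministically bounded by $r$ in every realization.

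For the cycle statement, fix a copy $C$ of $C_k$ in $G$; it has exactly $2k$ ordered cyclic readings $v_1 v_2 \cdots v_k v_1$ (one for each starting vertex and direction). For any such reading, the probability that $v_i \in U_i$ for every $i$ equals $k^{-k}$, since the $k$ vertices of $C$ are placed independently and uniformly. Interpreting $c(U_1, \ldots, U_k)$ as the number of ordered tuples $(u_1, \ldots, u_k)$ with $u_i \in U_i$ that form a cycle, linearity of expectation gives
$$\mathbb{E}\bigl[c(U_1, \ldots, U_k)\bigr] = 2k \cdot k^{-k} \cdot \bigl(\#\text{ copies of } C_k \text{ in } G\bigr) = \tfrac{2}{k^{k-1}} \cdot \bigl(\#\text{ copies of } C_k\bigr).$$
Since $c(U_1, \ldots, U_k) \leq r$ holds for every partition, the expectation is also at most $r$, which yields $\#C_k \leq \tfrac{1}{2} k^{k-1} r$. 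The path case is identical, except that a copy of $P_{k-1}$ (on $k$ vertices) admits only $2$ ordered readings (forward and backward), so the factor $2k$ above is replaced by $2$, and the analogous computation gives the bound $\tfrac{1}{2} k^k r$ on the number of copies of $P_{k-1}$.

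This is a routine averaging argument and I do not anticipate any real obstacle. The only point requiring care is the labeling convention: $c(U_1, \ldots, U_k)$ must be read as counting ordered cyclic readings (equivalently, ordered tuples $(u_1, \ldots, u_k)$ forming a cycle), so that distinct readings of the same geometric cycle are counted separately; this is what produces the constant $\tfrac{1}{2} k^{k-1}$ rather than $k^{k-1}$. If one prefers the unordered interpretation, a union bound over the $2k$ readings combined with the disjointness of the parts yields the same bound.
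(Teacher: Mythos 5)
Your proposal is correct and is essentially identical to the paper's proof: both use the same uniformly random independent partition, compute $\mathbb{E}[c(U_1,\dots,U_k)] = \#C_k(G)\cdot 2k\cdot k^{-k}$ and $\mathbb{E}[p(U_1,\dots,U_k)] = \#P_{k-1}(G)\cdot 2\cdot k^{-k}$, and conclude from the deterministic bound $r$ on the expectation. Your extra remark about the ordered-reading convention is a fair clarification but does not change the argument.
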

\begin{proof}
	Let $V(G) = U_1 \cup \dots \cup U_k$ be a random partition, generated according to $\mathbb{P}[v \in U_i] = \frac{1}{k}$ for each $v \in V(G)$ and
	$1 \leq i \leq k$, independently. Then
	$\mathbb{E}\left[ c(U_1,\dots,U_k) \right] = \#C_k(G) \cdot 2k \cdot k^{-k}$ and $\mathbb{E}\left[ p(U_1,\dots,U_k) \right] = \#P_{k-1}(G) \cdot 2 \cdot k^{-k}$, where $\#C_k(G)$ (resp. $\#P_{k-1}(G)$) denotes the number of copies of $C_k$ (resp. $P_{k-1}$) in $G$. Since these expectations are not larger than $r$, the claim follows.
\end{proof}
In what follows, let us denote the vertices of $C_{2k+1}$ (the $(2k+1)$-cycle) by $1,\dots,2k+1$, with edges $\{1,2\},\dots,\{2k,2k+1\},\{2k+1,1\}$.
For a graph $G$, denote by $\mathcal{I}(G)$ the set of all non-empty independent sets of $G$. We will need the following trivial (yet somewhat complicated to state) claim.
\begin{claim}\label{claim:maximal_independent_sets}
Let $J$ be a non-empty independent set of $C_{2k+1}$. Then there is $I \in \mathcal{I}(C_{2k+1})$ which contains $J$ and satisfies the following. Let $i_1,\dots,i_r$ be the elements of $I$ in the order they appear when traversing the cycle $1,\dots,2k+1$. Then for every $1 \leq j \leq r$, $i_j$ and $i_{j+1}$ are at distance either $2$ or $3$, namely $i_{j+1}-i_j \equiv 2,3 \pmod{2k+1}$, and if $i_j$ and $i_{j+1}$ are at distance $3$ then either $i_j \in J$ or $i_{j+1} \in J$.
\end{claim}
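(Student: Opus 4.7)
The plan is a direct greedy construction: walk around the cycle starting from $J$ and repeatedly insert vertices so that every gap in the resulting independent set has length 2 or 3, with length-3 gaps forced to sit next to a $J$-vertex.

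First, suppose $|J| \ge 2$. Let $j_1, j_2, \dots, j_s$ be the elements of $J$ listed in the cyclic order inherited from $1, 2, \dots, 2k+1$, and for each $t$ let $d_t$ be the cyclic distance from $j_t$ to $j_{t+1}$ (indices mod $s$); since $J$ is independent, $d_t \ge 2$ for every $t$, and $\sum_t d_t = 2k+1$. I will fill in each open arc between $j_t$ and $j_{t+1}$ independently, according to the parity of $d_t$. If $d_t \in \{2,3\}$, add nothing and the single gap has the required length (and, in the $d_t=3$ case, both its endpoints lie in $J$). If $d_t = 2q$ with $q \ge 2$, insert the vertices $j_t+2, j_t+4, \dots, j_t + 2q-2$; the resulting consecutive gaps inside the arc are all equal to 2. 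If $d_t = 2q+1$ with $q \ge 2$, insert $j_t+2, j_t+4, \dots, j_t+2q-2$, so the gaps inside the arc are $2, 2, \dots, 2, 3$, where the single 3-gap is adjacent to the $J$-vertex $j_{t+1}$.

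Taking $I$ to be $J$ together with all inserted vertices gives an $I \supseteq J$ such that consecutive elements $i_j, i_{j+1}$ on the cycle are always at distance 2 or 3. Whenever the distance is 3, either we are inside an arc whose last gap was forced to be 3 (in which case $i_{j+1} \in J$), or we are in an arc with $d_t = 3$ (in which case $i_j, i_{j+1} \in J$). Independence of $I$ is automatic since consecutive elements are at distance $\ge 2$, so no two elements of $I$ are adjacent on $C_{2k+1}$.

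The only genuinely different case is $|J|=1$, say $J=\{j_1\}$, where there is a single arc of length $2k+1$ running from $j_1$ back to itself; the mild obstacle is that here one must insert a length-3 gap since $2k+1$ is odd, and this 3-gap must be adjacent to $j_1$. This is easily arranged by inserting $j_1+3, j_1+5, j_1+7, \dots, j_1+(2k-1)$: the cyclic gaps then read $3, 2, 2, \dots, 2$, with the 3-gap incident to $j_1 \in J$, and the resulting set is independent for exactly the same reason as above. Together these two cases yield the required $I$, completing the proof.
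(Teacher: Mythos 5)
Your construction is correct and is essentially the same as the paper's: both fill each arc between consecutive elements of $J$ greedily with step-$2$ insertions, leaving at most one gap of length $3$ per arc placed adjacent to a vertex of $J$, and both treat $|J|=1$ by a separate explicit choice of $I$. The only differences are cosmetic (you split by the parity of the arc length where the paper phrases it as "stop at $j_{i+1}$ or $j_{i+1}-1$", and in the $|J|=1$ case you put the $3$-gap at the start of the arc rather than the end).
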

\begin{proof}
	If $|J| = 1$, say without loss of generality $J = \{1\}$,
	then
	$I = \{2j-1 : 1 \leq j \leq \noindent k\}$ is easily seen to satisfy the requirements of the claim. Assume then that $|J| \geq 2$, and
	let $j_1,\dots,j_r$ be the elements of $J$, as they appear when traversing the $(2k+1)$-cycle $1,\dots,2k+1$. For each $1 \leq i \leq r$, we greedily pick an independent set $I_i$ in the path connecting $j_i$ and $j_{i+1}$, which contains both $j_i$ and $j_{i+1}$, as follows.
	In addition to $j_i$ and $j_{i+1}$, we add to $I_i$ the elements $j_{i}+2,j_{i}+4,\dots$ until we reach $j_{i+1}$ or $j_{i+1} - 1$. If we reached $j_{i+1}$, then the distance between every pair consecutive elements of $I_i$ is $2$, and if we reached $j_{i+1} - 1$ then this true for all pairs except for $j_{i+1}-3,j_{i+1}$. It is now easy to see that $I = \bigcup_{i = 1}^{r}{I_i}$ satisfies the requirements of the claim.
\end{proof}
\begin{lemma}\label{lem:2k+1, 2k+3}
For every $k \geq 2$ it holds that
$\ex(n,C_{2k+1},C_{2k+3}) \leq (2k+1)^{2k}2^{2k+1}n^k$.
\end{lemma}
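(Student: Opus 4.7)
By Claim~\ref{prop:random_partition}, it suffices to show that for every ordered partition $V(G)=U_1\sqcup\cdots\sqcup U_{2k+1}$ of our $C_{2k+3}$-free graph $G$, one has $c(U_1,\ldots,U_{2k+1})\leq O_k(n^k)$. I will fix such a partition and prove this bound via a ``tail--body'' decomposition.

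For each $(U_1,\ldots,U_{2k+1})$-cycle $u_1u_2\cdots u_{2k+1}$, I would split it at the pair $(u_{2k-1},u_1)$ into a length-$3$ \emph{tail} $u_{2k-1}u_{2k}u_{2k+1}u_1$ and a length-$(2k-2)$ \emph{body} $u_1u_2\cdots u_{2k-1}$. Writing $T(x,y)$ for the number of length-$3$ paths from $x\in U_{2k-1}$ to $y\in U_1$ with interior in $U_{2k}\cup U_{2k+1}$, and $B(y,x)$ for the number of $(y,U_2,\ldots,U_{2k-2},x)$-paths (length $2k-2$), we obtain the identity
\[
c(U_1,\ldots,U_{2k+1}) \;=\; \sum_{(x,y)\in U_{2k-1}\times U_1} T(x,y)\cdot B(y,x).
\]

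The core of the argument is to use the $C_{2k+3}$-free condition to set up the following dichotomy at each pair $(x,y)$: either $B(y,x)=0$, or $T(x,y)$ is bounded by a constant $C_k$ depending only on $k$. The reason is that a tail and a body naturally combine into a $C_{2k+1}$, and if $T(x,y)$ exceeds $C_k$ then two ``independent'' tails can be used to detour through two new vertices at an appropriate edge of the body; locating such an edge is exactly where Claim~\ref{claim:maximal_independent_sets} is applied, with $J$ being the set of body-positions at which a given length-$3$ extension can be grafted without creating a vertex-repetition. The resulting structure is a length-$2k$ path from $y$ to $x$ vertex-disjoint from a further length-$3$ tail between $x$ and $y$, hence a $C_{2k+3}$ in $G$, contradicting the hypothesis. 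Given this dichotomy one gets $c(U_1,\ldots,U_{2k+1})\leq C_k\cdot p(U_1,U_2,\ldots,U_{2k-1})$, and the number of $(U_1,\ldots,U_{2k-1})$-paths can be bounded by $O_k(n^k)$ by iterating the same tail/body trick on a shorter odd cycle (or by a direct partition-counting argument analogous to Claim~\ref{prop:random_partition}).

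The main obstacle is precisely this ``large-$T$ forces $C_{2k+3}$'' step: one must verify that among the many choices of tail (and, simultaneously, of local detours along the body) there is a configuration whose $2$-vertex detour lies entirely outside $V(B)\cup V(T)$. Claim~\ref{claim:maximal_independent_sets} is used here to identify a large independent set of safe body-positions at which detours can be inserted with guaranteed vertex-disjointness, which provides the combinatorial room needed to close the argument.
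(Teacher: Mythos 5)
There is a genuine gap: the central dichotomy --- ``either $B(y,x)=0$ or $T(x,y)\leq C_k$'' --- is false. Take a single body path $y=u_1,u_2,\dots,u_{2k-1}=x$ together with $\Theta(n)$ internally disjoint tails $x,a_i,b_i,y$ with $a_i\in U_{2k}$, $b_i\in U_{2k+1}$. Every cycle in this graph either avoids the interior of the body (and is then a $C_6$ formed by two tails) or uses the whole body and exactly one tail (a $C_{2k+1}$); since $2k+3\notin\{6,\,2k+1\}$ for $k\geq 2$, the graph is $C_{2k+3}$-free, yet $B(y,x)=1$ and $T(x,y)=\Theta(n)$. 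The structural point you are missing is that many tails at the \emph{same} pair $(x,y)$ never combine to lengthen a cycle by $2$: two paths between the same two endpoints of lengths $3$ and $3$, or $3$ and $2k-2$, only yield cycles of length $6$ or $2k+1$. To manufacture a $C_{2k+3}$ one needs a \emph{local} detour, i.e.\ a vertex off the cycle adjacent to two cycle vertices at distance $2$, and nothing in your setup produces one. Moreover, even granting the dichotomy, the resulting bound $c(U_1,\dots,U_{2k+1})\leq C_k\, p(U_1,\dots,U_{2k-1})$ is far too weak: a $(U_1,\dots,U_{2k-1})$-path has no closure or parity constraint, so in a complete bipartite (hence $C_{2k+3}$-free) graph one has $p(U_1,\dots,U_{2k-1})=\Theta(n^{2k-1})$, not $O_k(n^k)$; your suggestion to bound the number of bodies ``by iterating the tail/body trick on a shorter odd cycle'' cannot work because bodies are paths, not cycles.

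For comparison, the paper proves the sharper inequality $c(U_1,\dots,U_{2k+1})\leq\sum_{I\in\mathcal{I}(C_{2k+1})}\prod_{i\in I}|U_i|$ by a deletion argument: repeatedly discard any cycle that is the unique one through some independent-set trace, so that if the inequality failed there would survive a cycle $C=(u_1,\dots,u_{2k+1})$ every independent trace of which is shared with another surviving cycle. One then shows that the set $J$ of positions $i$ admitting an alternative $u_i'\neq u_i$ adjacent to both $u_{i-1}$ and $u_{i+1}$ is a non-empty independent set, and Claim~\ref{claim:maximal_independent_sets} is applied to \emph{that} $J$ to force two alternative vertices at nearby positions, which is exactly the local detour configuration yielding a $C_{2k+3}$. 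Your intended use of Claim~\ref{claim:maximal_independent_sets} does not match this mechanism, and the proposal as written cannot be repaired without importing the paper's selection of a special cycle $C$ (or an equivalent idea).
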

\begin{proof}
	Let $G$ be an $n$-vertex $C_{2k+3}$-free graph. By claim \ref{prop:random_partition} it is sufficient to prove that for every partition
	$V(G) = U_1 \cup \dots \cup U_{2k+1}$ we have
	$c(U_1,\dots,U_{2k+1}) \leq 2^{2k+1}n^k$. We will actually prove that
	\begin{equation}\label{eq:2k+1,2k+3_upper_bound}
	c(U_1,\dots,U_{2k+1}) \leq \sum_{I \in \mathcal{I}(C_{2k+1})}{\prod_{i \in I}{|U_i|}}.
	\end{equation}
	This will be sufficient, as $C_{2k+1}$ has at most $2^{2k+1}$ independent sets, and each of these sets contributes at most $n^k$ to the above sum.
	Assume by contradiction that \eqref{eq:2k+1,2k+3_upper_bound} is false.
	Let $\mathcal{C}$ denote the set of all $(U_1,\dots,U_{2k+1})$-cycles in $G$.
	We first show that there is $C = (u_1,\dots,u_{2k+1}) \in \mathcal{C}$ such that for every
	$I \in \mathcal{I}(C_{2k+1})$ there is
	$C' \in \mathcal{C} \setminus \{C\}$ which contains
	$\{u_i : i \in I\}$. We find $C$ greedily as follows. As long as there is $C = (u_1,\dots,u_{2k+1}) \in \mathcal{C}$ and
	$I \in \mathcal{I}(C_{2k+1})$ such that $C$ is the only $(U_1,\dots,U_{2k+1})$-cycle containing $\{u_i : i \in I\}$, we remove $C$ from $\mathcal{C}$, and we say that $C$ was removed due to
	$\{u_i : i \in I\}$. Fixing any
	$I \in \mathcal{I}(C_{2k+1})$
	and $u_i \in U_i$ for $i \in I$,
	observe that at most one cycle from $\mathcal{C}$ was removed due to
	$\{u_i : i \in I\}$. Thus, the overall number of cycles removed is not larger than
	the right-hand side of \eqref{eq:2k+1,2k+3_upper_bound}.
	Since by our assumption \eqref{eq:2k+1,2k+3_upper_bound} is false, there is a cycle
	$C = (u_1,\dots,u_{2k+1}) \in \mathcal{C}$ which had not been removed by the end of the process. Then $C$ satisfies our requirement. Let us fix such a $C = (u_1,\dots,u_{2k+1})$ for the rest of the proof.
	
	Let $J$ be the set of all
	$1 \leq i \leq 2k+1$ such that there is
	$u'_i \in U_i \setminus \{u_i\}$ which is adjacent to $u_{i-1}$ and $u_{i+1}$. We claim that $J$ is a non-empty independent set (of the $(2k+1)$-cycle).
	To show that $J$ is an independent set, assume by contradiction that there is $1 \leq i \leq 2k+1$ such that $i,i+1 \in J$, and let
	$u'_i \in U_i \setminus \{u_i\}$ and
	$u'_{i+1} \in U_{i+1} \setminus \{u_{i+1}\}$
	be witnesses to $i,i+1 \in J$. Then $u'_i,u_{i+1},u_i,u'_{i+1},u_{i+2},\dots,u_{i-1},u'_i$
	is a $(2k+3)$-cycle, a contradiction.
	We now show that $J \neq \emptyset$. Set
	$I' = \{2j : 2 \leq j \leq k\} \cup \{1\}$ and
	$I'' = \{2j : 3 \leq j \leq k\} \cup \{1,3\}$ and note that they are both independent sets.
	By our choice of $C = (u_1,\dots,u_{2k+1})$, there is
	$C' = (u'_1,\dots,u'_{2k+1}) \in \mathcal{C} \setminus \{C\}$ which contains $u_i$ for every
	$i \in I'$. Since $C' \neq C$, one of the following holds:
	either $u'_i \neq u_i$ for some $i \in \{2j+1 : 2 \leq j \leq k\}$, implying that $i \in J$ and we are done, or $(u'_2,u'_3) \neq (u_2,u_3)$.
	If $u'_2 = u_2$ or $u'_3 = u_3$ then $3 \in J$ or $2 \in J$, respectively, and again we are done. We deduce that $u'_2 \neq u_2$ and $u'_3 \neq u_3$.
	By repeating the same argument with respect to $I''$, we get a cycle
	$C'' = (u''_1,\dots,u''_{2k+1}) \in \mathcal{C} \setminus \{C\}$ such that either $u''_i \neq u_i$ for some
	$i \in \{2j+1 : 3 \leq j \leq k\} \cup \{2\}$, implying that
	$i \in J$ and we are done, or $u''_4 \neq u_4$ and
	$u''_5 \neq u_5$.
	But now
	$u_1,u'_2,u'_3,u_4,u_3,u''_4,u''_5,u_6,\dots,u_{2k+1},u_1$
	is a $(2k+3)$-cycle, a contradiction. See the top drawing in Figure 1 for an illustration.
	
	We thus proved that $J$ is a non-empty independent set. Apply Claim \ref{claim:maximal_independent_sets} to $J$ to get $I \in \mathcal{I}(C_{2k+1})$ with the properties stated in the claim.
	By our choice of $C = (u_1,\dots,u_{2k+1})$, there is
	$C' = (u'_1,\dots,u'_{2k+1}) \in \mathcal{C} \setminus \{C\}$ which contains $u_i$ for every
	$i \in I$. Let $i_1,\dots,i_r$ be the elements of $I$ in the order they appear when traversing the cycle $1,\dots,2k+1$. Since $C' \neq C$, there is
	$1 \leq j \leq r$ such that
	$(u'_{i_j + 1},\dots,u'_{i_{j+1}-1}) \neq (u_{i_j + 1},\dots,u_{i_{j+1}-1})$
	\nolinebreak \footnote{Here subscripts are taken modulo $2k+1$, while double subscripts are taken modulo $r$.}.
	Assume without loss of generality that $j = 1$ and $i_1 = 2$ (so in particular, $2 \in I$).
	By the guarantees of Claim \ref{claim:maximal_independent_sets}, we have
	$i_2 - i_1 \equiv 2,3 \pmod{2k+1}$, so either $i_2 = 4$ or $i_2 = 5$.
	Assume first that
	$i_2 = 4$. Then
	$u'_{3} \neq u_{3}$, implying that $3 \in J$, which is impossible as
	$2 \in I$, $J \subseteq I$ and $I$ is an independent set.
	Assume now that
	$i_2 = 5$. If $u'_{3} = u_{3}$ then $u'_4 \neq u_4$ and so
	$4 \in J$, which is again impossible as $5 \in I$, $J \subseteq I$ and $I$ is an independent set. So
	$u'_3 \neq u_3$ and similarly
	$u'_4 \neq u_4$. By the guarantees of Claim \ref{claim:maximal_independent_sets}, we have that either
	$2 \in J$ or $5 \in J$, say without loss of generality that $2 \in J$. Then by the definition of $J$, there is
	$u''_2 \in U_2 \setminus \{u_2\}$ adjacent to $u_{1}$ and
	$u_{3}$. But now
	$u_1,u''_2,u_3,u_2,u'_3,u'_4,u_5,\dots,u_{2k+1},u_1$ is a $(2k+3)$-cycle, a contradiction. See the bottom drawing in Figure 1 for an illustration.
	This completes the proof.	
\end{proof}
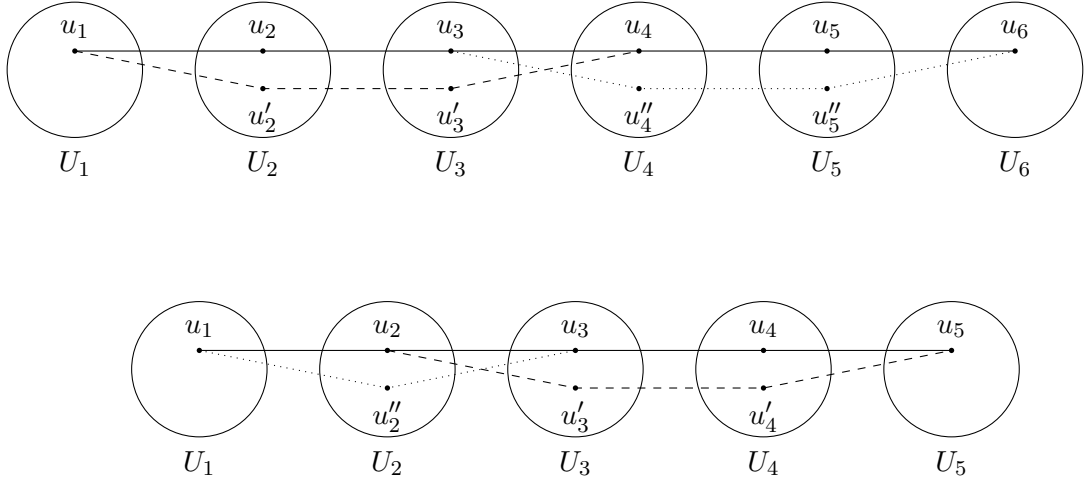
\begin{figure}[h]\label{figure:2k+1,2k+3}
	\centering
	\begin{tikzpicture}
	\foreach \i in {1,2,3,4,5,6}
	{
		\coordinate (a\i) at (2.5*\i,0.25);
		\coordinate (b\i) at (2.5*\i,-0.25);
		\coordinate (c\i) at (2.5*\i,0);
		\draw (a\i) node[fill=black,circle,minimum size=2pt,inner sep=0pt,label=$u_\i$] {};
		\draw (c\i) circle (0.9cm);
		\draw (c\i)+(0,-1.25) node {$U_\i$};
	}
	\foreach \i in {2,3}
	{
		\draw (b\i) node[fill=black,circle,minimum size=2pt,inner sep=0pt,label=below:$u'_\i$] {};
	}
	\foreach \i in {4,5}
	{
		\draw (b\i) node[fill=black,circle,minimum size=2pt,inner sep=0pt,label=below:$u''_\i$] {};
	}
	
	\draw (a1) -- (a2) -- (a3) -- (a4) -- (a5) -- (a6);
	\draw [dashed] (a1) -- (b2) -- (b3) -- (a4);
	\draw [dotted](a3) -- (b4) -- (b5) -- (a6);
	\end{tikzpicture}
	\newline \newline \newline \newline
	\begin{tikzpicture}
	\foreach \i in {1,2,3,4,5}
	{
		\coordinate (a\i) at (2.5*\i,0.25);
		\coordinate (b\i) at (2.5*\i,-0.25);
		\coordinate (c\i) at (2.5*\i,0);
		\draw (a\i) node[fill=black,circle,minimum size=2pt,inner sep=0pt,label=$u_\i$] {};
		\draw (c\i) circle (0.9cm);
		\draw (c\i)+(0,-1.25) node {$U_\i$};
	}
	\foreach \i in {2}
	{
		\draw (b\i) node[fill=black,circle,minimum size=2pt,inner sep=0pt,label=below:$u''_\i$] {};
	}
	\foreach \i in {3,4}
	{
		\draw (b\i) node[fill=black,circle,minimum size=2pt,inner sep=0pt,label=below:$u'_\i$] {};
	}
	
	\draw (a1) -- (a2) -- (a3) -- (a4) -- (a5);
	\draw [dotted] (a1) -- (b2) -- (a3);
	\draw [dashed] (a2) -- (b3) -- (b4) -- (a5);
	\end{tikzpicture}
	\caption{Illustrations for the proof of Lemma \ref{lem:2k+1, 2k+3}}
\end{figure}


\section{Proof of the Main Result}\label{sec:proof main}
In this section we prove the upper bounds for all cases in Theorem \ref{thm:main}, except for the case of two consecutive odd integers which was handled in Section \ref{sec:consec_odds}. The lower bounds will be proven in Section \ref{sec:lower}. At the end of this section, we give the proof of Proposition \ref{prop:C3_even_cycle}.

\subsection{Preliminary Lemmas}\label{subsec:prelim}

Here we introduce several lemmas which will be used in the proof of Theorem \ref{thm:main}. 
We start with the following key lemma, which is the most important ingredient in the proof of Theorem \ref{thm:main}, as it allows us to obtain tight bounds in terms of $n$ and $\ell$. The proof of this lemma appears in Section \ref{sec:key}.
\begin{lemma}\label{lem:P3 even_cycle main}
	Let $\ell \geq 3$, let $G$ be an $n$-vertex graph, let
	$X,Y,Z,W \subseteq V(G)$ be pairwise-disjoint vertex-sets and assume that the bipartite graphs $(X,Y)$, $(Y,Z)$ and $(Z,W)$ are $C_{2\ell}$-free.
	Then there are subsets
	$Y' \subseteq Y$ and $Z' \subseteq Z$ such that
	\begin{enumerate}
		\item $e(Y',X), e(Y',Z), e(Z',Y), e(Z',W) = O(\ell n)$.
		\item $p(X, Y \setminus Y', Z \setminus Z', W) = O(\ell^2 n^2)$.
	\end{enumerate}
\end{lemma}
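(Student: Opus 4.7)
The plan is to define $Y'$ and $Z'$ as the sets of high-degree vertices in the ambient bipartite graphs. Fixing a threshold $T = c\ell\, n^{2/(\ell+1)}$ for a suitable absolute constant $c$, I would set
$$Y' := \{y \in Y : \deg_X(y) > T\} \cup \{y \in Y : \deg_Z(y) > T\},$$
with $Z'$ defined analogously with respect to $\deg_Y$ and $\deg_W$. The exponent $2/(\ell+1)$ is chosen so that the two competing requirements of the lemma become compatible.

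For the four edge bounds in item (1), each of the bipartite graphs involved, say $(Y',X)$, is $C_{2\ell}$-free (as a subgraph of $(Y,X)$), so the Naor--Verstra\"ete bound gives
$$e(Y',X) \le O(\ell)\bigl((|Y'|\,n)^{(\ell+1)/(2\ell)} + |Y'| + n\bigr).$$
On the other hand, each $y$ placed into $Y'$ via $\deg_X(y) > T$ contributes more than $T$ edges to $e(Y',X)$, which gives the lower bound $T\cdot|\{y\in Y':\deg_X(y)>T\}|\le e(Y',X)$. Combining the two inequalities and solving for $|Y'|$ yields $|Y'|=O_\ell\bigl(n^{(\ell-1)/(\ell+1)}\bigr)$. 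Plugging this size estimate back into Naor--Verstra\"ete, applied to $(Y',X)$ and $(Y',Z)$ separately, produces $e(Y',X), e(Y',Z) = O(\ell n)$; the bounds $e(Z',Y), e(Z',W) = O(\ell n)$ follow by the symmetric argument.

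For item (2), after removal every $y \in Y \setminus Y'$ has $\deg_X(y), \deg_Z(y) \le T$ and every $z \in Z\setminus Z'$ has $\deg_Y(z), \deg_W(z) \le T$. I would then express the 4-path count as
$$p(X, Y\setminus Y', Z\setminus Z', W) = \sum_{(x,w)\in X\times W} e\bigl(N_Y(x)\cap (Y\setminus Y'),\; N_Z(w)\cap(Z\setminus Z')\bigr)$$
and apply Naor--Verstra\"ete to each of the $C_{2\ell}$-free bipartite graphs on restricted neighborhoods. Writing $a_x := |N_Y(x)\cap (Y\setminus Y')|$ and $b_w := |N_Z(w)\cap (Z\setminus Z')|$, each summand is at most $O(\ell)\bigl((a_xb_w)^{(\ell+1)/(2\ell)} + a_x + b_w\bigr)$. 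One then sums via H\"older's inequality, using the key identities $\sum_x a_x = e(X,Y\setminus Y') \le T\cdot n$ and $\sum_w b_w = e(Z\setminus Z', W) \le T\cdot n$ (both of which follow from the degree cap on $Y\setminus Y'$ and $Z\setminus Z'$), together with $|X|,|W|\le n$, to obtain the desired $O(\ell^2 n^2)$.

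The hardest part I expect is making the path bound numerically tight: one must choose $T$ small enough that the sum above is $O(\ell^2 n^2)$ and at the same time large enough that $|Y'|,|Z'|$ are small enough to force $e(Y',X)=O(\ell n)$. The crude bound $p \le T^2\cdot e(Y,Z) \le T^2\cdot O(\ell n^{1+1/\ell})$ is not sharp enough for small $\ell$, so the proof must genuinely exploit the finer Naor--Verstra\"ete estimate on each neighborhood pair and the moment control coming from the degree cap. Verifying that the exponent $2/(\ell+1)$ really threads this needle, while also producing constants that depend on $\ell$ only as $O(\ell)$ and $O(\ell^2)$ (without hidden polynomial factors in $\ell$), is the technical heart of the proof.
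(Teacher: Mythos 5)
Your construction of $Y'$ and $Z'$ and your treatment of item (1) are essentially the paper's argument: remove vertices of degree above a threshold $T=\Theta(\ell n^{2/(\ell+1)})$ (the paper uses $2/(\ell+2)$ in the exponent when $\ell$ is even, to match the asymmetric form of the Naor--Verstra\"ete bound, but your bootstrapping of the size bound $|Y'|$ and the re-substitution into Naor--Verstra\"ete to get $e(Y',X)=O(\ell n)$ is exactly the paper's Lemma on high degrees followed by its application). That part is sound.

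The gap is in item (2). Writing $p=\sum_{(x,w)\in X\times W}e(A_x,B_w)$ with $A_x=N_Y(x)\cap(Y\setminus Y')$, $B_w=N_Z(w)\cap(Z\setminus Z')$ and applying Naor--Verstra\"ete to each pair cannot yield $O(\ell^2n^2)$, because the Naor--Verstra\"ete estimate carries an additive error term linear in the side sizes, and you are summing it over $\Theta(n^2)$ pairs. Concretely, $\sum_{(x,w)}(a_x+b_w)=|W|\,e(X,Y\setminus Y')+|X|\,e(Z\setminus Z',W)$, and in the extremal configuration for $\ell=3$ (three extremal $C_6$-free incidence graphs between consecutive classes, where every degree is $\Theta(n^{1/3})$ so that \emph{no} vertex is removed) this is $\Theta(n^{7/3})$; multiplied by the $O(\ell)$ in front it already exceeds the target. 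The main term fares no better: H\"older gives $\sum_{(x,w)}(a_xb_w)^{(\ell+1)/(2\ell)}\le n^2T^{(\ell+1)/\ell}=O(\ell^{1+1/\ell}n^{2+2/\ell})$, again super-quadratic. The underlying problem is that the lemma asserts the \emph{average} of $e(A_x,B_w)$ over the $n^2$ pairs is $O(\ell^2)$, which is far below anything Naor--Verstra\"ete can certify for an individual pair whose sides have polynomial size; no choice of threshold exponent threads this needle.

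What the paper actually does is the opposite decomposition: it groups the \emph{middle} vertices rather than the endpoint pairs. For $\ell\ge4$ the ``crude'' bound you dismiss, $p\le T^2\cdot e(Y\setminus Y',Z\setminus Z')\le T^2\cdot z(n,n,C_{2\ell})=O(\ell^3n^{1+1/\ell+4/(\ell+2)})$, is in fact enough, because the exponent is strictly below $2$ for $\ell\ge4$ and the residual regime where $\ell^3n^{\cdots}>\ell^2n^2$ forces $n=O(\ell)$, where the trivial bound $p\le n^4=O(\ell^2n^2)$ applies. Only $\ell=3$ genuinely resists, and there the paper splits $Y\setminus Y'$ and $Z\setminus Z'$ into a low-degree part plus $O(\log n)$ dyadic degree classes $Y_i,Z_j$ (according to $|N_X(y)|$ and $|N_W(z)|$), bounds $|Y_i|=O(n^22^{-3i})$ via the high-degree lemma, and applies Naor--Verstra\"ete to each of the $O(\log^2 n)$ pairs $(Y_i,Z_j)$ of \emph{controlled size}; the resulting geometric series converges to $O(n^2)$. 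You should replace your endpoint-pair summation with a decomposition of this kind.
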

At the end of Section \ref{sec:key}, we explain why the sets $Y'$ and $Z'$ in the statement of Lemma 
\ref{lem:P3 even_cycle main} are required, and why Lemma \ref{lem:P3 even_cycle main} is false for $\ell = 2$. The falsity of Lemma \ref{lem:P3 even_cycle main} for $\ell = 2$ is the reason we need a separate proof for the case $\ex(n,C_{2k+1},C_{2k+3})$ (see Section \ref{sec:consec_odds}).

In what follows we will need a special case of the following theorem, which gives a tight bound on $\ex(n,P_k,C_{2\ell})$ for every $k \geq 2$. The proof of this theorem appears at the end of this section. 
\begin{theorem}\label{thm:ex_path_cycle}
	For every $k \geq 2$, we have
	\begin{equation*}
	\ex(n,P_k,C_{2\ell}) =
	\begin{cases}
	\Theta_k(n^{k/2 + 1})
	& \ell = 2, \\
	\Theta_k( \ell^{\lfloor (k+1)/2 \rfloor} n^{\lceil (k+1)/2 \rceil})
	& \ell \geq 3.
	\end{cases}
	\end{equation*}
\end{theorem}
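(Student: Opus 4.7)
The plan is to reduce via Claim \ref{prop:random_partition} to bounding the partitioned path count $p(U_0,\ldots,U_k)$ over all $(k{+}1)$-partitions of $V(G)$, and then argue by induction on $k$ with Lemma \ref{lem:P3 even_cycle main} as the key ingredient.

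For the lower bound, I will use explicit constructions. When $\ell \geq 3$, the complete bipartite graph $K_{\ell-1,n-\ell+1}$ is $C_{2\ell}$-free (a $2\ell$-cycle in a bipartite graph needs $\ell$ vertices on each side, but one part has only $\ell-1$), and a direct count shows that a $P_k$-copy uses $\lceil(k+1)/2\rceil$ vertices from the large side and $\lfloor(k+1)/2\rfloor$ from the small side (up to a constant number of orderings), yielding $\Theta(\ell^{\lfloor(k+1)/2\rfloor} n^{\lceil(k+1)/2\rceil})$ copies. When $\ell = 2$, a near-regular $C_4$-free graph of degree $\Theta(\sqrt{n})$ (e.g., the Erd\H{o}s--R\'enyi polarity graph) yields $\Theta(n (\sqrt{n})^k) = \Theta(n^{k/2+1})$ copies of $P_k$.

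For the upper bound with $\ell \geq 3$, observe that the target $f(k) := \ell^{\lfloor(k+1)/2\rfloor} n^{\lceil(k+1)/2\rceil}$ satisfies $f(k)/f(k-1) = n$ when $k$ is even and $f(k)/f(k-1) = \ell$ when $k$ is odd. Accordingly, I will prove by induction on $k$ two separate recurrences: $p(U_0,\ldots,U_k) \leq n \cdot p(U_0,\ldots,U_{k-1})$ for $k$ even (trivial, extending a partitioned $P_{k-1}$-copy through an endpoint neighbor) and $p(U_0,\ldots,U_k) \leq O(\ell) \cdot p(U_0,\ldots,U_{k-1})$ for $k$ odd (the substantive case). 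The base case $k = 3$ of the odd recurrence follows from Lemma \ref{lem:P3 even_cycle main}: after excising the exceptional subsets $U_1' \subseteq U_1$ and $U_2' \subseteq U_2$, the ``good'' count $p(U_0, U_1 \setminus U_1', U_2 \setminus U_2', U_3)$ is $O(\ell^2 n^2)$; paths through $U_1'$ or $U_2'$ are handled using the edge bounds $e(U_1',U_0), e(U_1',U_2), e(U_2',U_1), e(U_2',U_3) = O(\ell n)$ in combination with a Cauchy--Schwarz estimate on sums such as $\sum_{y \in U_1'} \deg_{U_0}(y) \deg_{U_2}(y)$. For larger odd $k$, I apply Lemma \ref{lem:P3 even_cycle main} to the last 4-window $(U_{k-3},U_{k-2},U_{k-1},U_k)$ and combine the resulting 4-vertex estimate with the inductive bound on the prefix $(U_0,\ldots,U_{k-3})$. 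For $\ell = 2$, the $C_4$-free condition $|N(u) \cap N(w)| \leq 1$ immediately gives $\sum_v d(v)^2 = O(n^2)$, and with Nikiforov's spectral inequality $\lambda_1(G) = O(\sqrt{n})$ the number of walks of length $k$ (hence paths) is at most $n \lambda_1^k = O(n^{k/2+1})$.

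The main obstacle will be executing the odd-$k$ inductive step in a way that preserves the tight multiplicative factor $\Theta(\ell)$ rather than the easier $O(n)$. Although Lemma \ref{lem:P3 even_cycle main} cleanly bounds the paths avoiding its exceptional subsets, absorbing the contributions from paths passing through $U_{k-2}'$ or $U_{k-1}'$ while keeping the $\ell$-dependence linear requires a delicate averaging argument that combines the edge bounds from the lemma with the inductive hypothesis via Cauchy--Schwarz; a naive extension of the "good" estimate by $n$-many endpoint neighbors on each side would already produce an extra factor of $n/\ell$ beyond the tight bound, so one must argue that the endpoints of $P_{k-1}$-copies have average degree $O(\ell)$ in an appropriate weighted sense. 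All other pieces (the partition reduction, the lower bounds, and the $\ell = 2$ upper bound) are relatively routine.
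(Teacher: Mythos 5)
Your outline gets the base case right (reduce via Claim \ref{prop:random_partition}, use Lemma \ref{lem:P3 even_cycle main} to handle $k=3$ by excising $Y',Z'$ and controlling the excised paths via the edge bounds $e(Y',X)=O(\ell n)$ etc.), but the inductive step --- which is the heart of the upper bound --- is not actually carried out, and you say so yourself: the odd-$k$ recurrence $\#P_k\le O(\ell)\cdot\#P_{k-1}$ is named as ``the main obstacle'' and left to ``a delicate averaging argument.'' That step does not follow from anything you have set up. Lemma \ref{lem:P3 even_cycle main} gives only an \emph{aggregate} bound $p(U_{k-3},U_{k-2}\setminus U'_{k-2},U_{k-1}\setminus U'_{k-1},U_k)=O(\ell^2n^2)$ over the whole $4$-window, not a per-endpoint bound, so it cannot be multiplied against the inductive count of prefixes $p(U_0,\dots,U_{k-3})$; and the claim that the endpoints of $P_{k-1}$-copies have weighted average degree $O(\ell)$ is precisely the statement you would need to prove. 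The paper sidesteps this entirely by stepping the induction by \emph{two} rather than one: it first proves (Lemma \ref{lem:P2_even_cycle}, a one-line consequence of Erd\H{o}s--Gallai applied to a random bipartition of a neighbourhood) that in a $C_{2\ell}$-free graph every vertex is the endpoint of at most $4(\ell-1)n$ paths of length $2$. Decomposing each $P_k$ as a $P_{k-2}$ and a $P_2$ sharing an endpoint then gives $\#P_k\le\sum_v\#P_{k-2}(v)\cdot\#P_2(v)\le O(\ell n)\cdot\ex(n,P_{k-2},C_{2\ell})$, and since the target satisfies $f(k)/f(k-2)=\ell n$ exactly, induction from the base cases $k=2,3$ closes immediately with the right $\ell$-dependence. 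If you want to salvage your plan, replace the two alternating one-step recurrences by this single two-step recurrence; the per-vertex $P_2$ bound is the missing ingredient.

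Two smaller points. First, your lower-bound construction $K_{\ell-1,\,n-\ell+1}$ contains no copy of $P_k$ at all when $\ell-1<\lfloor(k+1)/2\rfloor$, so for $3\le\ell\lesssim k/2$ you still need a construction giving $\Omega_k(n^{\lceil(k+1)/2\rceil})$ (the paper uses an unbalanced blowup of $P_k$ in which only a maximum independent set is blown up; see Claim \ref{prop:blowup_construction_cycles}). For larger $\ell$ your complete bipartite graph is fine and is essentially equivalent to the paper's Claim \ref{prop:blowup_construction_general}. Second, your spectral argument for $\ell=2$ (Nikiforov's bound $\lambda_1=O(\sqrt n)$ for $C_4$-free graphs, then paths $\le$ walks $\le n\lambda_1^k$) is correct and is a genuinely different, arguably slicker, route than the paper's, which instead runs the same $\sum_v\#P_{k-2}(v)\#P_2(v)$ recurrence with the base case $\#P_3\le O(n)\cdot 2e(G)=O(n^{5/2})$ coming from Bondy--Simonovits.
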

To complement Theorem \ref{thm:ex_path_cycle}, note that
$\ex(n,P_k,C_{2\ell+1}) = \Theta_k(n^{k+1})$, since a blowup of $P_k$ does not contain odd cycles.
The following lemma also plays a key role in the proof of Theorem \ref{thm:main}. 
\begin{lemma}\label{lem:paths_bound main}
	Let $s \geq 2$ and $\lambda \geq 1$, let $G$ be an $n$-vertex graph and let
	$U_1,\dots,U_s \subseteq V(G)$ be pairwise-disjoint sets such that
	$e(U_1,U_2) \leq \lambda(|U_1| + |U_2|)$ and
	$e(N_{U_{i+1}}(u_i),U_{i+2}) \leq
	\lambda(\left| N_{U_{i+1}}(u_i) \right| + \left| U_{i+2} \right|)$ for every
	$1 \leq i \leq s-2$ and $u_i \in U_i$.
	Then
	$$
	p(U_1,\dots,U_s) \leq
	\begin{cases}
	\lambda^{(s-1)/2}n^{(s-3)/2}\left( |U_1||U_s| + \lambda n \right)
	& s \text{ is odd},
	\\
	\lambda^{s/2}n^{s/2-1}(|U_1| + |U_2|)
	& s \text{ is even}.
	\end{cases}
	$$
\end{lemma}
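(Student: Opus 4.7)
The plan is to reduce everything to a single linear recursion for $p_k := p(U_1,\dots,U_k)$, which is then unwound by induction on $s$ in steps of two.

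First I would derive the key recursion
\[ p_k \;\leq\; \lambda\, p_{k-1} + \lambda\, |U_k|\, p_{k-2} \qquad (k \geq 3). \]
To see this, group every $(U_1,\dots,U_k)$-path according to its first $k-2$ vertices: for a fixed $(U_1,\dots,U_{k-2})$-path $(u_1,\dots,u_{k-2})$, the number of extensions $(u_{k-1},u_k)$ is exactly $e(N_{U_{k-1}}(u_{k-2}),\,U_k)$, which by the codegree hypothesis (applied with $i=k-2$) is bounded by $\lambda(|N_{U_{k-1}}(u_{k-2})| + |U_k|)$. Summing over all $(u_1,\dots,u_{k-2}) \in \mathcal P_{k-2}$ and using the identity $\sum_{(u_1,\dots,u_{k-2})} |N_{U_{k-1}}(u_{k-2})| = p_{k-1}$ gives the recursion.

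Next, I would check the two base cases. For $s=2$ the edge-count hypothesis gives $p_2 = e(U_1,U_2) \leq \lambda(|U_1|+|U_2|)$, which is exactly the even bound. For $s=3$, one application of the recursion yields
\[ p_3 \;\leq\; \lambda p_2 + \lambda|U_3|\,p_1 \;\leq\; \lambda^2(|U_1|+|U_2|) + \lambda|U_1||U_3| \;\leq\; \lambda|U_1||U_3| + \lambda^2 n, \]
matching the odd bound (using that the $U_i$ are disjoint, so $|U_1|+|U_2|\leq n$).

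For the inductive step I would treat both parities simultaneously. In the even case $s = 2m$, plug the inductive odd bound $p_{2m-1} \leq \lambda^{m-1}n^{m-2}(|U_1||U_{2m-1}|+\lambda n)$ and the inductive even bound $p_{2m-2} \leq \lambda^{m-1}n^{m-2}(|U_1|+|U_2|)$ into the recursion; the elementary estimates $|U_1||U_{2m-1}| \leq n(|U_1|+|U_2|)$ and $|U_{2m}| \leq n$ then collapse every term into the desired form $O(\lambda^m n^{m-1}(|U_1|+|U_2|))$. The odd case $s=2m+1$ is parallel: the recursion feeds the even bound $p_{2m}$ into a $\lambda n$-type term and the odd bound $p_{2m-1}$ into an $|U_1||U_{2m+1}|$ term (via $|U_{2m+1}| \leq n$), assembling into $O(\lambda^m n^{m-1}(|U_1||U_s|+\lambda n))$.

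The main bookkeeping obstacle is that the additive $\lambda n$ term in the odd bound gets multiplied by $\lambda$ when it is pushed through one step of the recursion; one must check that the resulting $\lambda^2 n$ factor is still absorbed by the next $\lambda n$ slot created by the recursion, which it is, since the factor of $\lambda$ accumulates naturally in front of $n^{m-1}$. Apart from this accounting, the argument is entirely mechanical: the codegree hypothesis was tailored precisely to make this recursion drop out in one line.
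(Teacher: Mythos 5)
Your backward recursion $p_k \leq \lambda\, p_{k-1} + \lambda\, |U_k|\, p_{k-2}$ is correctly derived (the hypothesis with $i=k-2$ counts exactly the two-step extensions of a fixed $(U_1,\dots,U_{k-2})$-path, and the prefix families inherit the hypotheses), and the base cases $s=2,3$ check out. The gap is in the even inductive step, precisely at the point you flag as ``the main bookkeeping obstacle''. Writing $s=2m$, the term $\lambda\, p_{2m-1}$ contributes, via the additive $\lambda n$ inside the odd bound, a summand $\lambda\cdot\lambda^{m-1}n^{m-2}\cdot\lambda n=\lambda^{m+1}n^{m-1}$. The even target $\lambda^{m}n^{m-1}(|U_1|+|U_2|)$ has no $+\lambda n$ slot, so absorbing this summand would require $\lambda=O(|U_1|+|U_2|)$, which the hypotheses do not give ($\lambda$ is only bounded from below; take $|U_1|=|U_2|=1$ and $\lambda$ huge). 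Your assertion that ``the factor of $\lambda$ accumulates naturally in front of $n^{m-1}$'' is exactly where this fails: the accumulated factor is $\lambda^{m+1}$, one power of $\lambda$ too many, and nothing on the even-case right-hand side can pay for it. (The odd step is fine, since there the carried term lands in the $+\lambda n$ slot of the odd target; only the even step breaks, and it breaks already at $s=4$, where your recursion yields an irreducible $\lambda^{3}n$.) A secondary, more cosmetic issue is that each step of your induction proves the bound only up to a multiplicative constant growing with $s$, whereas the lemma is stated with constant $1$.

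The paper avoids this by peeling from the front rather than the back: it writes $p(U_1,\dots,U_s)=\sum_{u_1\in U_1}p(N_{U_2}(u_1),U_3,\dots,U_s)$ and applies the induction hypothesis to this $(s-1)$-set family. The crucial difference is that the additive $\lambda n$ term of the odd bound is then summed over $u_1\in U_1$, so it acquires a factor $|U_1|$ rather than a factor $\lambda$, and $\lambda^{s/2}n^{s/2-1}|U_1|$ is exactly what the even target absorbs. If you insist on keeping your two-step recursion, you would have to strengthen the even induction hypothesis to something like $\lambda^{s/2}n^{s/2-1}(|U_1|+|U_2|+\lambda)$ and treat $\lambda>n$ separately --- but that proves a different (weaker) statement than the one asserted.
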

\begin{proof}
	The proof is by induction on $s$. The base case $s=2$ is given by our assumption that 
	$e(U_1,U_2) \leq \lambda(|U_1| + |U_2|)$. Let then $s \geq 3$. Note that for every $u_1 \in U_1$, the sets $N_{U_2}(u_1),U_3,\dots,U_s$ satisfy the assumptions of the lemma, so we may apply the induction hypothesis to them. Suppose first that $s$ is odd. We have
	\begin{align*}
	p(U_1,\dots,U_s) &=
	\sum\limits_{u_1 \in U_1}{p(N_{U_2}(u_1),U_3,\dots,U_s)} \leq 
	\sum\limits_{u_1 \in U_1}{\lambda^{(s-1)/2}n^{(s-3)/2}(|N_{U_2}(u_1)| + |U_s|)} \\ &= 
	\lambda^{(s-1)/2}n^{(s-3)/2} \cdot ( e(U_1,U_2) + |U_1||U_s| ) \leq 
	\lambda^{(s-1)/2}n^{(s-3)/2} \cdot (\lambda(|U_1|+|U_2|) + |U_1||U_s|) \\ &\leq 
	\lambda^{(s-1)/2}n^{(s-3)/2} \cdot ( |U_1||U_s| + \lambda n ),
	\end{align*}
	where in the first inequality we used the induction hypothesis for $s-1$, and in the second inequality we used the assumption $e(U_1,U_2) \leq \lambda(|U_1| + |U_2|)$. The induction step for even $s$ is similar. Indeed, 
	\begin{align*}
	p(U_1,\dots,U_s) &=
	\sum\limits_{u_1 \in U_1}{p(N_{U_2}(u_1),U_3,\dots,U_s)} \leq 
	\sum\limits_{u_1 \in U_1}{\lambda^{(s-2)/2}n^{(s-4)/2}(|N_{U_2}(u_1)||U_s| + \lambda n)} \\ &= 
	\lambda^{(s-2)/2}n^{(s-4)/2} \cdot e(U_1,U_2) \cdot |U_s| + \lambda^{s/2}n^{s/2-1} \cdot |U_1| \\ &\leq 
	\lambda^{(s-2)/2}n^{(s-4)/2} \cdot \lambda(|U_1| + |U_2|) \cdot |U_s| + \lambda^{s/2}n^{s/2-1} \cdot |U_1| \leq  
	\lambda^{s/2}n^{s/2-1} \cdot (|U_1| + |U_s|),
	\end{align*}
	where in the first inequality we used the induction hypothesis for $s-1$, in the second inequality we used the assumption $e(U_1,U_2) \leq \lambda(|U_1| + |U_2|)$, and in the last inequality we used the trivial bound $|U_1|+|U_2| \leq n$. 
\end{proof}
We now derive two important corollaries of Lemma \ref{lem:paths_bound main}, stated as Lemmas 
\ref{lem:paths_bound forbidden_path} and \ref{lem:paths_bound forbidden_even_cycle} below. In their proof we will use the following well-known theorem of Erd\H{o}s and Gallai.
\begin{theorem}[\cite{EG}]\label{thm:EG}
	For every $t \geq 1$ we have $\ex(n,P_{t}) \leq \frac{t - 1}{2}n$.
\end{theorem}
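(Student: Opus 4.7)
The plan is to establish this classical Erd\H{o}s--Gallai bound by strong induction on $n$: for any $P_t$-free $n$-vertex graph $G$, I would show that $e(G) \leq (t-1)n/2$. The base case $n \leq t$ is immediate, since $e(G) \leq \binom{n}{2} \leq (t-1)n/2$ holds in that range.

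For the inductive step I would first reduce to the connected case: if $G$ decomposes into components $H_1,\dots,H_m$, then applying the induction hypothesis component-wise gives $e(G) = \sum_i e(H_i) \leq \sum_i (t-1)|V(H_i)|/2 = (t-1)n/2$. So I may assume $G$ is connected. Next, if $G$ contains a vertex $v$ with $\deg(v) \leq (t-1)/2$, then $G-v$ is still $P_t$-free on $n-1$ vertices, and the induction hypothesis yields $e(G) \leq e(G-v) + (t-1)/2 \leq (t-1)(n-1)/2 + (t-1)/2 = (t-1)n/2$.

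The main case is when $G$ is connected, $\delta(G) \geq \lceil t/2 \rceil$, and $n \geq t+1$. Here I would aim for a direct contradiction by exhibiting a copy of $P_t$. The standard tool is the longest-path argument: fix a longest path $P = v_0 v_1 \dots v_p$ in $G$. Every neighbor of $v_0$ must lie on $P$ (otherwise $P$ could be extended), so $p \geq \deg(v_0) \geq \lceil t/2 \rceil$. If $p \geq t$ we are done; otherwise, each edge $v_0 v_i \in E(G)$ produces a P\'osa rotation, yielding a new longest path of the same length with endpoint $v_{i-1}$. Iterating rotations at both ends of $P$ produces a large set of ``endpoint candidates'' all of whose neighborhoods are contained in $V(P)$, and combining this with the $\delta \geq \lceil t/2 \rceil$ lower bound forces either $p \geq t$ directly, or closes up a cycle spanning a long sub-arc of $P$, from which a copy of $P_t$ can be extracted.

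The hard part will be exactly this last step. Using only $\delta(G) \geq \lceil t/2 \rceil$ together with the trivial longest-path observation gives a path of length only $\lceil t/2 \rceil$, which is roughly half of the target $t$. Doubling this via rotations --- or equivalently invoking Dirac's theorem that a connected graph with $\delta \geq d$ and $n \geq 2d+1$ contains a path of length $2d$ --- is the substantive extremal content of the argument. The tightness of the bound is witnessed by the disjoint union of $\lfloor n/t \rfloor$ copies of $K_t$, which is $P_t$-free and attains exactly $(t-1)n/2$ edges when $t \mid n$.
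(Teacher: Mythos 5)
The paper does not prove this statement at all --- it is quoted verbatim from Erd\H{o}s and Gallai \cite{EG} and used as a black box --- so there is no in-paper argument to compare against. Your reconstruction is the standard textbook proof and its skeleton is correct: induction on $n$, the trivial base case $n\le t$, reduction to connected graphs, deletion of a vertex of degree at most $(t-1)/2$, and in the remaining case $\delta(G)\ge\lceil t/2\rceil$ with $G$ connected and $n\ge t+1$, an appeal to the long-path lemma. Two small points deserve care if you write this out in full. First, the final step is the only place with real content, and your rotation sketch is not yet a proof; the cleanest finish is the classical lemma that every connected graph contains a path of length $\min\{2\delta(G),\,n-1\}$ (proved by taking a longest path $v_0\dots v_p$, noting all neighbours of $v_0$ and $v_p$ lie on it, and if $p<2\delta$ using the crossing-chords pigeonhole to close a cycle through all of $V(P)$, which by connectivity and $n>p+1$ extends to a longer path). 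Second, your version of that lemma requires $n\ge 2d+1$ with $d=\lceil t/2\rceil$, which fails when $t$ is odd and $n=t+1$; the $\min\{2\delta,n-1\}$ formulation covers that case too, since $\min\{t+1,t\}=t$ still yields the forbidden $P_t$. With those repairs the argument is complete, and your extremal example (disjoint copies of $K_t$) correctly certifies tightness.
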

\begin{lemma}\label{lem:paths_bound forbidden_path}
	Let $2 \leq s < t$ be integers having the same parity, let $G$ be an $n$-vertex graph and let
	$U_1,\dots,U_s \subseteq V(G)$ be pairwise-disjoint vertex-sets such that there is no path of length $t-1$ inside $U_1 \cup \dots \cup U_s$ between a vertex in $U_1$ and a vertex in $U_s$. Then
	$$
	p(U_1,\dots,U_s) \leq
	\begin{cases}
	\left(\frac{t-s}{2}\right)^{(s-1)/2}n^{(s-3)/2}
	\left( |U_1||U_s| + \frac{t-s}{2}n \right)
	& s \text{ is odd},
	\\
	\left(\frac{t-s}{2}\right)^{s/2}n^{s/2-1}
	\left( |U_1| + |U_2| \right)
	& s \text{ is even}.
	\end{cases}
	$$
\end{lemma}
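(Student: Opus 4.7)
The plan is to reduce Lemma \ref{lem:paths_bound forbidden_path} to Lemma \ref{lem:paths_bound main} by taking $\lambda := (t-s)/2$, which is a positive integer since $s<t$ have the same parity; with this choice the two cases of the stated bound coincide with those of Lemma \ref{lem:paths_bound main} exactly. So the task reduces to verifying the two hypotheses of Lemma \ref{lem:paths_bound main}: (i) $e(U_1,U_2) \leq \lambda(|U_1|+|U_2|)$, and (ii) $e(N_{U_{i+1}}(u_i),U_{i+2}) \leq \lambda(|N_{U_{i+1}}(u_i)|+|U_{i+2}|)$ for every $1 \leq i \leq s-2$ and $u_i \in U_i$.

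Before checking these, I would first pass to the ``live'' subsets $U_i^{\ast} := \{u \in U_i : u \text{ lies on some } (U_1,\dots,U_s)\text{-path}\}$. Then $p(U_1,\dots,U_s) = p(U_1^{\ast},\dots,U_s^{\ast})$, and by monotonicity of the Lemma \ref{lem:paths_bound main} bounds in $|U_1|$ and $|U_s|$, it suffices to verify the two hypotheses for the tuple $(U_1^{\ast},\dots,U_s^{\ast})$. The essential feature of this restriction is that every $u \in U_i^{\ast}$ comes equipped with a $(U_1,\dots,U_s)$-path $a_1,a_2,\dots,a_s$ (with $a_i=u$ and $a_j \in U_j$), providing a length-$(i-1)$ \emph{prefix} $a_1,\dots,a_i$ from $U_1$ to $u$ inside $U_1\cup\cdots\cup U_i$, and a length-$(s-i)$ \emph{suffix} $a_i,\dots,a_s$ from $u$ to $U_s$ inside $U_i\cup\cdots\cup U_s$.

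The verification proceeds by contradiction. Any violation of an edge bound at a bipartite pair $(A,B)$ (with $A\subseteq U_j^{\ast}$ and $B\subseteq U_{j+1}^{\ast}$) gives, via Theorem \ref{thm:EG} applied with $k=t-s+1$ (using $2\lambda=t-s$), a path $Q$ with $t-s+1$ edges in $(A,B)$; since $t-s+1$ is odd, its endpoints $y,z$ lie in different parts. I would then splice $Q$ with the canonical prefix/suffix of appropriate live vertices to produce a $U_1$-$U_s$ path of length exactly $t-1$ inside $U_1\cup\cdots\cup U_s$, contradicting the hypothesis. For condition (ii), with endpoints $y\in N_{U_{i+1}^{\ast}}(u_i)$ and $z\in U_{i+2}^{\ast}$, I prepend the prefix of the path through $u_i$ together with the edge $u_iy$, and append the suffix (starting at $z$) of the path through $z$; the resulting walk has $(i-1)+1+(t-s+1)+(s-i-2)=t-1$ edges. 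Condition (i) is handled by the analogous splice consisting only of $Q$ (whose endpoint $x$ already lies in $U_1$) and the suffix of the path through $y$ starting at $y$.

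The only real obstacle is checking that this walk is indeed a simple path. This is where both the live-vertex reduction and the decomposition into three pieces pay off: the prefix is confined to $U_1\cup\cdots\cup U_i$, the bipartite path $Q$ to $U_{i+1}\cup U_{i+2}$, and the suffix (beyond $z$) to $U_{i+3}\cup\cdots\cup U_s$; since the $U_j$'s are pairwise disjoint, these three pieces are automatically vertex-disjoint, overlapping only at the intended concatenation points $u_i$ and $z$. Hence the assembled walk is a simple $U_1$-$U_s$ path of length $t-1$, contradicting the hypothesis. Once both hypotheses are verified for $(U_1^{\ast},\dots,U_s^{\ast})$, Lemma \ref{lem:paths_bound main} applied with $\lambda=(t-s)/2$ yields exactly the claimed bounds.
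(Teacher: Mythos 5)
Your proposal is correct and follows essentially the same route as the paper: reduce to Lemma \ref{lem:paths_bound main} with $\lambda=(t-s)/2$, and verify its edge conditions by contradiction, using Erd\H{o}s--Gallai to extract a path of length $t-s+1$ in the offending bipartite pair and splicing it with a prefix/suffix of an existing $(U_1,\dots,U_s)$-path to produce a forbidden $U_1$--$U_s$ path of length $t-1$. The only (immaterial) difference is that the paper discards edges not lying on any $(U_1,\dots,U_s)$-path and takes the prefix/suffix from the path through the relevant edge, whereas you discard dead vertices and splice through $u_i$; both yield the same length count and the same disjointness argument.
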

\begin{proof}
	We may and will assume that every edge in $G$ is on some $(U_1,\dots,U_s)$-path (as deleting all other edges does not change $p(U_1,\dots,U_s)$). It is sufficient to show that the conditions of Lemma \ref{lem:paths_bound main} hold for $\lambda = \frac{t-s}{2} \geq 1$. We prove the stronger statement that for every $1 \leq i \leq s-1$ and for every
	$U'_i \subseteq U_i$ and $U'_{i+1} \subseteq U_{i+1}$, it holds that
	$e(U'_i,U'_{i+1}) \leq \frac{t-s}{2}\left( |U'_i| + |U'_{i+1}| \right)$.
	If, by contradiction, this does not hold, then by Theorem \ref{thm:EG} there is a path
	$P = v_1,\dots,v_{t-s+2}$ of length $t-s+1$ in the bipartite graph $(U'_i,U'_{i+1})$. Since $t-s+1$ is odd, we may assume without loss of generality that
	$v_1 \in U'_i$ and $v_{t-s+2} \in U'_{i+1}$. By our assumption, the edge $(v_1,v_2)$ is on some $(U_1,\dots,U_s)$-path, implying that there is a path
	$P' \subseteq U_1 \cup \dots \cup U_i$
	between\footnote{It might be the case that $v_1 \in U_1$ (if $i=1$), in which case $P'$ has no edges.} $U_1$ and $v_1$. Similarly, since the edge $(v_{t-s+1},v_{t-s+2})$ is on some $(U_1,\dots,U_s)$-path, there is a path
	$P'' \subseteq U_{i+1} \cup \dots \cup U_s$
	between $v_{t-s+2}$ to $U_s$.
	Then $P' P P''$ is a path of length $t-1$ inside $U_1 \cup \dots \cup U_s$ between $U_1$ and $U_s$, in contradiction to our assumption.
\end{proof}
\begin{lemma}\label{lem:paths_bound forbidden_even_cycle}
	Let $s,\ell \geq 2$, let $G$ be an $n$-vertex $C_{2\ell}$-free graph, let
	$\{u_0\},U_1,\dots,U_s \subseteq V(G)$ be pairwise-disjoint vertex-sets, and suppose that
	$u_0$ is adjacent to every vertex in $U_1$. Then
	$$
	p(U_1,\dots,U_s) \leq
	\begin{cases}
	(\ell-1)^{(s-1)/2}n^{(s-3)/2}\left( |U_1||U_s| + (\ell-1)n \right)
	& s \text{ is odd},
	\\
	(\ell-1)^{s/2}n^{s/2-1}(|U_1| + |U_2|)
	& s \text{ is even}.
	\end{cases}
	$$
\end{lemma}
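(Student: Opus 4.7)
The plan is to deduce this lemma from Lemma \ref{lem:paths_bound main} by taking $\lambda = \ell - 1 \geq 1$; once the two edge-count hypotheses of that lemma are verified, the stated bound follows by direct substitution. So the entire task reduces to showing that $e(U_1, U_2) \leq (\ell - 1)(|U_1| + |U_2|)$, and that $e(N_{U_{i+1}}(u_i), U_{i+2}) \leq (\ell - 1)(|N_{U_{i+1}}(u_i)| + |U_{i+2}|)$ for every $1 \leq i \leq s-2$ and every $u_i \in U_i$.

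The first inequality I would prove by contradiction. If it fails, then Theorem \ref{thm:EG} applied with $t = 2\ell - 1$ to the bipartite graph $(U_1, U_2)$ yields a path $a_0 a_1 \dots a_{2\ell-1}$ of length $2\ell - 1$. Since $2\ell - 1$ is odd, its two endpoints lie on opposite sides of the bipartition, and I may assume $a_0 \in U_1$ and $a_{2\ell-1} \in U_2$, which forces $a_{2\ell-2} \in U_1$ as well. Then $u_0, a_0, a_1, \dots, a_{2\ell-2}, u_0$ is a cycle of length $2\ell$ in $G$, using the edges $u_0 a_0$ and $u_0 a_{2\ell-2}$ that exist by the hypothesis that $u_0$ is adjacent to every vertex of $U_1$. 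This contradicts the $C_{2\ell}$-freeness of $G$.

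The second inequality follows by exactly the same argument applied one level deeper: setting $A := N_{U_{i+1}}(u_i)$, the vertex $u_i$ is adjacent to every element of $A$ by the definition of a neighborhood, so the pair $(A, U_{i+2})$ together with $u_i$ plays the role that $(U_1, U_2)$ together with $u_0$ played above, and we obtain a forbidden $C_{2\ell}$ if $e(A, U_{i+2}) > (\ell - 1)(|A| + |U_{i+2}|)$. The only mild subtlety in both steps is ensuring that the sub-path extracted from the $(2\ell - 1)$-edge path has both of its remaining endpoints on the side containing the universal vertex; this is precisely why odd length $t = 2\ell - 1$ is the right choice in Theorem \ref{thm:EG}, and it is what makes $\lambda = \ell - 1$ the correct value to feed into Lemma \ref{lem:paths_bound main}.
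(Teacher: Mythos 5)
Your proof is correct and is essentially identical to the paper's: both reduce to Lemma \ref{lem:paths_bound main} with $\lambda=\ell-1$, and both verify the edge-count hypotheses by invoking Theorem \ref{thm:EG} to extract a path of length $2\ell-1$, whose subpath of length $2\ell-2$ with both endpoints on the side of the universal vertex closes a forbidden $2\ell$-cycle. No gaps.
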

\begin{proof}
	It is sufficient to show that the conditions of Lemma \ref{lem:paths_bound main} hold with
	$\lambda = \ell-1 \geq 1$. If $e(U_1,U_2) > (\ell-1)(|U_1| + |U_2|)$ then by Theorem \ref{thm:EG}, there is a path of length $2\ell-1$ in the bipartite graph $(U_1,U_2)$. This path contains a subpath of length $2\ell-2$ with both endpoints in $U_1$, which closes a $2\ell$-cycle with $u_0$, in contradiction to the assumption of the lemma.
	Similarly, if
	$e(N_{U_{i+1}}(u_i),U_{i+2}) >
	(\ell - 1)(\left| N_{U_{i+1}}(u_i) \right| + \left| U_{i+2} \right|)$ for some
	$1 \leq i \leq s-2$ and $u_i \in U_i$, then by Theorem \ref{thm:EG} there is a path of length $2\ell-1$ in the bipartite graph with sides $N_{U_{i+1}}(u_i)$ and $U_{i+2}$. This path contains a subpath of length $2\ell-2$ with both endpoints in $N_{U_{i+1}}(u_i)$, which closes a $2\ell$-cycle with $u_i$, in contradiction to the assumption of the lemma.
\end{proof}
The construction in Claim \ref{prop:blowup_construction_cycles} shows that the bounds in the above two lemmas, as well as in Lemma \ref{lem:paths_bound main}, are tight (up to the constants depending on the parameters $\lambda,s,t,\ell$).
We now derive the following corollary of the above two lemmas, which will be used later on.
\begin{lemma}\label{lem:forbidden_cycles_main}
Let $k,\ell \geq 2$, let $G$ be an $n$-vertex graph and assume either that $G$ is $C_{2\ell}$-free or that $G$ is $C_{2\ell + 1}$-free and $\ell > k$.
Then for every partition $V(G) = V_1 \cup \dots \cup V_{2k+1}$ we have
$$
c(V_1,\dots,V_{2k+1}) \leq
\ell^{k-1} n^{k-2} \cdot
\left[ p(V_1,V_2,V_3,V_4) + p(V_{2k+1},V_1,V_2,V_3) \right].
$$
\end{lemma}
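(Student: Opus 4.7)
The plan is to fix, in each $(V_1,\ldots,V_{2k+1})$-cycle, the $4$-path $(v_1,v_2,v_3,v_4)$ on the first four classes and count the number of completions to a full cycle. A completion is precisely a $(V_5,\ldots,V_{2k+1})$-path from $N_{V_5}(v_4)$ to $N_{V_{2k+1}}(v_1)$. In the $C_{2\ell}$-free case, Lemma~\ref{lem:paths_bound forbidden_even_cycle} applies with $u_0=v_4$, $U_1=N_{V_5}(v_4)$, $U_2=V_6,\ldots,U_{2k-3}=N_{V_{2k+1}}(v_1)$ (here $s=2k-3$ is odd and $v_4$ is adjacent to every vertex of $U_1$), yielding a completion bound of $(\ell-1)^{k-2}n^{k-3}\bigl(|N_{V_5}(v_4)|\cdot|N_{V_{2k+1}}(v_1)|+(\ell-1)n\bigr)$. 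In the $C_{2\ell+1}$-free case with $\ell>k$, I instead use Lemma~\ref{lem:paths_bound forbidden_path} with $s=2k-3,\ t=2\ell-3$: same odd parity, $s<t$ since $k<\ell$, and any $(U_1,\ldots,U_s)$-path of length $t-1=2\ell-4$ would, together with the edges $v_4v_5$, $v_{2k+1}v_1$ and the $4$-path, close to a $C_{2\ell+1}$, contradicting the hypothesis. This gives the analogous bound with $\ell-k$ in place of $\ell-1$.

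Summing over all $(V_1,V_2,V_3,V_4)$-paths, the ``flat'' $(\ell-1)n$ term contributes exactly $(\ell-1)^{k-1}n^{k-2}\cdot p(V_1,V_2,V_3,V_4)$, producing the first of the two target terms. The ``product-of-neighborhoods'' term contributes $(\ell-1)^{k-2}n^{k-3}\cdot p(V_{2k+1},V_1,V_2,V_3,V_4,V_5)$, so it remains to bound the $6$-vertex path count. I apply Lemma~\ref{lem:paths_bound forbidden_even_cycle} a second time in its base case $s=2$ with $u_0=v_3$: for each $(V_{2k+1},V_1,V_2,V_3)$-path, the number of $(V_4,V_5)$-extensions equals $e(N_{V_4}(v_3),V_5)\le(\ell-1)(|N_{V_4}(v_3)|+|V_5|)=O(\ell n)$, so $p(V_{2k+1},V_1,V_2,V_3,V_4,V_5)\le O(\ell n)\cdot p(V_{2k+1},V_1,V_2,V_3)$. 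The $C_{2\ell+1}$-free analogue works the same way: a $P_{2\ell+1}$ in the bipartite graph $(N_{V_4}(v_3),V_5)$ contains a sub-$P_{2\ell}$ with both endpoints in $N_{V_4}(v_3)$, which closes via $v_3$ to a $C_{2\ell+1}$; hence by Theorem~\ref{thm:EG}, $e(N_{V_4}(v_3),V_5)\le\ell(|N_{V_4}(v_3)|+|V_5|)=O(\ell n)$. Combining the two steps produces the desired bound.

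The main obstacle is bookkeeping the constants. The direct calculation yields an estimate of the form $C(\ell-1)^{k-1}n^{k-2}\bigl(p(V_1,V_2,V_3,V_4)+p(V_{2k+1},V_1,V_2,V_3)\bigr)$ for a small absolute constant $C$ (arising from the two uses of $|A|+|B|\le 2n$), which must then be absorbed into the claimed prefactor $\ell^{k-1}n^{k-2}$ using the slack between $(\ell-1)^{k-1}$ and $\ell^{k-1}$; in the context of Theorem~\ref{thm:main} such constants are harmless. The edge case $k=2$ (where $s=2k-3=1$ lies outside the scope of Lemma~\ref{lem:paths_bound forbidden_even_cycle}) is handled separately by fixing a $3$-path $(v_1,v_2,v_3)$ and applying Lemma~\ref{lem:paths_bound forbidden_even_cycle} with $s=2$ and both endpoints restricted to $N_{V_4}(v_3)$ and $N_{V_5}(v_1)$, which produces the clean bound $c(V_1,\ldots,V_5)\le(\ell-1)\bigl(p(V_1,V_2,V_3,V_4)+p(V_5,V_1,V_2,V_3)\bigr)$.
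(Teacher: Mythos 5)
Your decomposition (fixing the $(V_1,V_2,V_3,V_4)$-path and invoking the odd-$s$ case of Lemmas \ref{lem:paths_bound forbidden_even_cycle} and \ref{lem:paths_bound forbidden_path} with $s=2k-3$) is a genuinely different route from the paper's, and in the $C_{2\ell}$-free case it goes through: the ``flat'' term gives $(\ell-1)^{k-1}n^{k-2}\,p(V_1,V_2,V_3,V_4)$, and for the product term the second application with $s=2$ gives $e(N_{V_4}(v_3),V_5)\le(\ell-1)\bigl(|N_{V_4}(v_3)|+|V_5|\bigr)\le(\ell-1)n$ (the two sets are disjoint, so in fact no factor of $2$ needs to be absorbed), whence the second term is $(\ell-1)^{k-1}n^{k-2}\,p(V_{2k+1},V_1,V_2,V_3)$ and the stated bound follows. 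Your $k=2$ fallback is also fine --- it is exactly the paper's argument specialized to $k=2$ --- though you should also record its $C_{2\ell+1}$-free variant via Lemma \ref{lem:paths_bound forbidden_path}.

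The genuine gap is in the $C_{2\ell+1}$-free case of your second reduction, $p(V_{2k+1},V_1,\dots,V_5)\le O(\ell n)\cdot p(V_{2k+1},V_1,V_2,V_3)$. You claim that a path with $2\ell$ edges in the bipartite graph $\bigl(N_{V_4}(v_3),V_5\bigr)$ whose endpoints both lie in $N_{V_4}(v_3)$ ``closes via $v_3$ to a $C_{2\ell+1}$''; in fact it closes to a cycle with $2\ell+2$ edges, which is even and not forbidden. More structurally, $v_3$ together with the bipartite graph $(N_{V_4}(v_3),V_5)$ is itself bipartite (place $v_3$ on the $V_5$ side), so no amount of density there can force an odd cycle, and the only available odd-length anchor, the path $v_{2k+1}v_1v_2v_3$, ends at $v_{2k+1}\in V_{2k+1}$, to which the endpoints of the Erd\H{o}s--Gallai path in $(N_{V_4}(v_3),V_5)$ have no guaranteed adjacency. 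I do not see how to repair this step without restructuring. The paper sidesteps the issue by fixing only the $(V_1,V_2,V_3)$-path and applying the \emph{even}-$s$ case ($s=2k-2$) of Lemma \ref{lem:paths_bound forbidden_even_cycle}, respectively Lemma \ref{lem:paths_bound forbidden_path} with $t=2\ell-2$, to the sets $N_{V_4}(v_3),V_5,\dots,V_{2k},N_{V_{2k+1}}(v_1)$; the even-case bound is already linear in $|N_{V_4}(v_3)|+|N_{V_{2k+1}}(v_1)|$ rather than a product of the two, so summing over $3$-paths produces $p(V_1,V_2,V_3,V_4)+p(V_{2k+1},V_1,V_2,V_3)$ directly and no secondary reduction is needed.
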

\begin{proof}
Fix any $(V_1,V_2,V_3)$-path $v_1,v_2,v_3$.
We claim that
\begin{equation}\label{eq:2k-3_paths_bound}
p\left( N_{V_4}(v_3),V_5,\dots,V_{2k},N_{V_{2k+1}}(v_1) \right) \leq
\ell^{k-1} n^{k-2} \cdot \left( |N_{V_4}(v_3)| + |N_{V_{2k+1}}(v_1)| \right).
\end{equation}
Indeed, if $G$ is $C_{2\ell}$-free then \eqref{eq:2k-3_paths_bound} follows from Lemma
\ref{lem:paths_bound forbidden_even_cycle}, applied with
$s = 2k-2$, $u_0 = v_3$ and the sets $N_{V_4}(v_3),V_5,\dots,V_{2k},N_{V_{2k+1}}(v_1)$ as $U_1,\dots,U_s$.
If $G$ is $C_{2\ell + 1}$-free and $\ell > k$ then there is no path of length $2\ell - 3$ inside
$V_4 \cup \dots \cup V_{2k+1}$ between a vertex in $N_{V_4}(v_3)$ and a vertex in $N_{V_{2k+1}}(v_1)$, as such a path would close a $(2\ell + 1)$-cycle with the path $v_1v_2v_3$. So \eqref{eq:2k-3_paths_bound} follows from Lemma \ref{lem:paths_bound forbidden_path}, applied with 
$s = 2k-2$, $t = 2\ell-2$, and the sets $N_{V_4}(v_3),V_5,\dots,V_{2k},N_{V_{2k+1}}(v_1)$ as $U_1,\dots,U_s$.
By summing \eqref{eq:2k-3_paths_bound} over all $(V_1,V_2,V_3)$-paths we get
\begin{align*}
c(V_1,\dots,V_{2k+1}) &=
\sum_{v_1,v_2,v_3}{c(v_1,v_2,v_3,V_4,\dots,V_{2k+1})} =
\sum_{v_1,v_2,v_3}{p(N_{V_4}(v_3),V_5,\dots,V_{2k},N_{V_{2k+1}}(v_1))}
\\ &\leq
\ell^{k-1} n^{k-2} \cdot
\sum_{v_1,v_2,v_3}{ \left( |N_{V_4}(v_3)| + |N_{V_{2k+1}}(v_1)| \right) }
\\ &=
\ell^{k-1} n^{k-2} \cdot
\left[ p(V_1,V_2,V_3,V_4) + p(V_{2k+1},V_1,V_2,V_3) \right]\;,
\end{align*}
thus completing the proof.
\end{proof}

\subsection{Proof of Theorem 1}\label{subsec:main}

Here we prove Theorem \ref{thm:main}.
The proof is split into several parts: Lemma \ref{lem:ex_even_cycles} handles the case that both cycle lengths are even; Lemma \ref{lem:ex_odd_even_cycles} handles the case where the forbidden cycle is even and the cycle whose number of copies is maximized is odd; finally, Lemma \ref{lem:ex_odd_cycles} handles the case where the cycle lengths are non-consecutive odd integers.
For convenience, we rephrase each of the cases, denoting the cycle lengths by $2k$ or $2k+1$ and $2\ell$ or $2\ell+1$ (rather than $k$ and $\ell$).

\begin{lemma}\label{lem:ex_even_cycles}
For every $k,\ell \geq 2$ we have $\ex(n,C_{2k},C_{2\ell})=O_{k}(\ell^k n^k)$.
\end{lemma}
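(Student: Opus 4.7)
The plan is to apply Claim \ref{prop:random_partition}, reducing the task to showing that $c(V_1,\dots,V_{2k}) = O_k(\ell^k n^k)$ for every partition $V(G) = V_1 \cup \dots \cup V_{2k}$. The case $k=2$ I would handle separately with a direct counting argument: each copy of $C_4$ contains exactly $4$ copies of $P_3$ (deleting any one of its $4$ edges), while any given $P_3$ closes into at most one $C_4$ (since there is at most one edge between its two endpoints). Hence $\#C_4(G) \leq \tfrac{1}{4}\#P_3(G) \leq \tfrac{1}{4}\ex(n,P_3,C_{2\ell}) = O(\ell^2 n^2)$ by Theorem \ref{thm:ex_path_cycle}, which is the desired bound.

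For $k \geq 3$, the main idea is to ``cut" each $(V_1,\dots,V_{2k})$-cycle $v_1 v_2 \cdots v_{2k} v_1$ into the initial $(V_1,V_2,V_3)$-path $v_1 v_2 v_3$ and the remaining $(2k-3)$-vertex path from $v_3$ to $v_{2k}$ through $V_4,\dots,V_{2k-1}$. Summing over the initial path gives
$$
c(V_1,\dots,V_{2k}) \;=\; \sum_{v_1 v_2 v_3}\, p\bigl(N_{V_4}(v_3),\, V_5,\dots,V_{2k-1},\, N_{V_{2k}}(v_1)\bigr).
$$
Since $s := 2k-3$ is always odd, Lemma \ref{lem:paths_bound forbidden_even_cycle} (applied with $u_0 = v_3$, which is adjacent to every vertex of $N_{V_4}(v_3)$) bounds each summand by
$$
(\ell-1)^{k-2}\, n^{k-3}\,\bigl(|N_{V_4}(v_3)|\cdot|N_{V_{2k}}(v_1)| \,+\, (\ell-1) n\bigr).
$$

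Summing, the product $|N_{V_4}(v_3)|\cdot |N_{V_{2k}}(v_1)|$ collapses into $p(V_4,V_3,V_2,V_1,V_{2k})$, a $5$-vertex (i.e.\ $P_4$) path count, while the additive term collapses into $(\ell-1) n \cdot p(V_1,V_2,V_3)$. I would then apply Theorem \ref{thm:ex_path_cycle} twice: $p(V_4,V_3,V_2,V_1,V_{2k}) \leq 2\,\ex(n,P_4,C_{2\ell}) = O(\ell^2 n^3)$ (or $O(n^3)$ if $\ell=2$), and $p(V_1,V_2,V_3) \leq 2\,\ex(n,P_2,C_{2\ell}) = O(\ell n^2)$ (or $O(n^2)$ if $\ell=2$). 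Both contributions are then of order $O(\ell^2 n^3)$, so
$$
c(V_1,\dots,V_{2k}) \;\leq\; (\ell-1)^{k-2} n^{k-3}\cdot O(\ell^2 n^3) \;=\; O(\ell^k n^k),
$$
which combined with Claim \ref{prop:random_partition} yields the desired $\ex(n,C_{2k},C_{2\ell}) = O_k(\ell^k n^k)$.

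The main thing to get right is the dependence on $\ell$. The key observation that makes the argument tight is that $2k-3$ is always odd, so Lemma \ref{lem:paths_bound forbidden_even_cycle} supplies the form $|U_1|\cdot|U_s|+(\ell-1)n$ rather than $|U_1|+|U_2|$; this is precisely what allows the sum over initial paths to telescope into a $P_4$ count, and Theorem \ref{thm:ex_path_cycle} then contributes exactly the $\ell^2$ needed to combine with $(\ell-1)^{k-2}$ into $\ell^k$. A less careful split (e.g.\ summing only over an edge $v_1v_2$) would yield only the weaker trivial $|U_1|+|U_2|$ bound in the even case $s=2k-2$ and lose the correct $\ell$-dependence; this is the small obstacle to navigate.
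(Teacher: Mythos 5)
Your proof is correct and follows essentially the same route as the paper's: reduce via Claim \ref{prop:random_partition}, cut each cycle so that Lemma \ref{lem:paths_bound forbidden_even_cycle} is applied with an \emph{odd} number of sets (yielding the product form $|U_1||U_s|+(\ell-1)n$), let the product telescope into a short path count, and finish with Theorem \ref{thm:ex_path_cycle}. The only difference is the cut point: the paper removes the single vertex $v_1$ (taking $u_0=v_1$ and $N_{V_2}(v_1),V_3,\dots,V_{2k-1},N_{V_{2k}}(v_1)$ as the $s=2k-1$ sets), which keeps $k=2$ inside the general argument and needs only the $P_2$ case of Theorem \ref{thm:ex_path_cycle}, whereas your cut after $v_1v_2v_3$ needs the $P_4$ case and forces the separate treatment of $k=2$ --- which is fine, except that for $\ell=2$ your bound $\ex(n,P_3,C_4)=O(n^2)$ is false (it is $\Theta(n^{5/2})$); that subcase is vacuous anyway since $\ex(n,C_4,C_4)=0$.
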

\begin{proof}
Let $G$ be an $n$-vertex $C_{2\ell}$-free graph. By Claim \ref{prop:random_partition}, it is enough to prove that
$c(V_1,\dots,V_{2k}) = O(\ell^k n^k)$ for every partition
$V(G) = V_1 \cup \dots \cup V_{2k}$.
Consider one such partition. Fixing $v_1 \in V_1$, we apply Lemma
\ref{lem:paths_bound forbidden_even_cycle} with $s = 2k-1$, $u_0 = v_1$ and the sets $N_{V_2}(v_1),V_3,\dots,V_{2k-1},N_{V_{2k}}(v_1)$ as $U_1,\dots,U_s$, to get
\begin{equation*}
c(v_1,V_2,\dots,V_{2k}) =
p(N_{V_2}(v_1),V_3,\dots,V_{2k-1},N_{V_{2k}}(v_1))
\leq
\ell^{k-1}n^{k-2} \cdot
\left( |N_{V_2}(v_1)| \cdot |N_{V_{2k}}(v_1)| + \ell n \right).
\end{equation*}
By summing over all $v_1 \in V_1$, we get
\begin{align*}
c(V_1,\dots,V_{2k}) &= \sum_{v_1 \in V_1}{c(v_1,V_2,\dots,V_{2k})} \leq
\ell^{k-1}n^{k-2} \cdot
\left( \sum_{v_1 \in V_1}{|N_{V_2}(v_1)| \cdot |N_{V_{2k}}(v_1)|} \right) +
\ell^k n^{k-1} \cdot |V_1| \\ &=
\ell^{k-1}n^{k-2} \cdot p(V_{2k},V_1,V_2) + \ell^k n^{k-1} \cdot |V_1| = O(\ell^k n^k) \;,
\end{align*}
where in the last inequality we used Theorem \ref{thm:ex_path_cycle}, which gives
$p(V_{2k},V_1,V_2) = O(\ell n^2)$.
\end{proof}

\begin{lemma}\label{lem:ex_odd_even_cycles}
For every $k \geq 2$ we have
\begin{equation*}
\ex(n,C_{2k+1},C_{2\ell}) \leq
\begin{cases}
O_k(n^{k+1/2}) & \ell = 2, \\
O_k(\ell^{k+1}n^k) & \ell \geq 3.
\end{cases}
\end{equation*}
\end{lemma}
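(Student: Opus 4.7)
The plan is to combine the same three ingredients used for even--even case but with one extra factor absorbed through Lemma \ref{lem:forbidden_cycles_main}: (i) the random-partition reduction, (ii) Lemma \ref{lem:forbidden_cycles_main} that reduces counting $(V_1,\dots,V_{2k+1})$-cycles to counting $P_3$-paths between four consecutive parts, and (iii) Theorem \ref{thm:ex_path_cycle} for $k=3$, which bounds the number of $P_3$-copies in a $C_{2\ell}$-free graph.

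Concretely, let $G$ be an $n$-vertex $C_{2\ell}$-free graph. By Claim \ref{prop:random_partition}, it is enough to show that for every partition $V(G)=V_1\cup\dots\cup V_{2k+1}$ we have $c(V_1,\dots,V_{2k+1}) = O_k(n^{k+1/2})$ when $\ell=2$ and $O_k(\ell^{k+1}n^k)$ when $\ell\geq 3$. Fix such a partition. Since $G$ is $C_{2\ell}$-free, Lemma \ref{lem:forbidden_cycles_main} yields
$$
c(V_1,\dots,V_{2k+1}) \;\leq\; \ell^{k-1}\,n^{k-2}\cdot\bigl[\,p(V_1,V_2,V_3,V_4)+p(V_{2k+1},V_1,V_2,V_3)\,\bigr].
$$
Each $(V_i,V_{i+1},V_{i+2},V_{i+3})$-path is in particular a copy of $P_3$ in the $C_{2\ell}$-free graph $G$, so each of the two quantities on the right is at most $\ex(n,P_3,C_{2\ell})$. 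Applying Theorem \ref{thm:ex_path_cycle} with $k=3$ gives $\ex(n,P_3,C_4)=O(n^{5/2})$ and, for $\ell\geq 3$, $\ex(n,P_3,C_{2\ell})=O(\ell^{2}n^{2})$.

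Substituting, for $\ell=2$ we get $c(V_1,\dots,V_{2k+1}) \leq 2^{k-1}n^{k-2}\cdot O(n^{5/2}) = O_k(n^{k+1/2})$, and for $\ell\geq 3$ we get $c(V_1,\dots,V_{2k+1}) \leq \ell^{k-1}n^{k-2}\cdot O(\ell^{2}n^{2}) = O_k(\ell^{k+1}n^k)$, which is exactly the desired bound.

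There is essentially no obstacle beyond having Lemma \ref{lem:forbidden_cycles_main} and Theorem \ref{thm:ex_path_cycle} available: the whole content has been pushed into those two statements (the former absorbing the $k$-th power from ``stitching'' $2k-2$ extra parts via the $C_{2\ell}$-free Erd\H{o}s--Gallai bound in Lemma \ref{lem:paths_bound forbidden_even_cycle}, and the latter providing the tight $P_3$-path bound). The only thing to verify is that the case $k=2$ (where the factor $n^{k-2}=1$ degenerates) still fits the formula, which it does. Note that this short argument is the reason why Lemma \ref{lem:P3 even_cycle main} enters only implicitly, through its use inside Theorem \ref{thm:ex_path_cycle}.
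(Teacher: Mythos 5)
Your proposal is correct and is essentially identical to the paper's own proof: the same reduction via Claim \ref{prop:random_partition}, the same application of Lemma \ref{lem:forbidden_cycles_main}, and the same bound on the two $P_3$-path counts via Theorem \ref{thm:ex_path_cycle} with $k=3$. The arithmetic in both cases ($\ell=2$ and $\ell\geq 3$) checks out.
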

\begin{proof}
We start with the case that $\ell \geq 3$.
Let $G$ be an $n$-vertex $C_{2\ell}$-free graph. By Claim \ref{prop:random_partition}, it is enough to prove that for every partition $V(G) = V_1 \cup \dots \cup V_{2k+1}$ we have
$c(V_1,\dots,V_{2k+1}) = O(\ell^{k+1}n^k)$.
By Theorem \ref{thm:ex_path_cycle} we have
$p(V_{2k+1},V_1,V_2,V_3),p(V_1,V_2,V_3,V_4) \leq O(\ell^2 n^2)$. Plugging these estimates into Lemma \ref{lem:forbidden_cycles_main} gives
$c(V_1,\dots,V_{2k+1}) =  O(\ell^{k+1}n^k)$, as required.

The proof for the case $\ell = 2$ is similar. As in the previous case, we consider a partition 
$V(G) = V_1 \cup \dots \cup V_{2k+1}$ of an $n$-vertex $C_4$-free graph. The only difference is that for $\ell = 2$, Theorem \ref{thm:ex_path_cycle} gives
$p(V_{2k+1},V_1,V_2,V_3),p(V_1,V_2,V_3,V_4) = O(n^{5/2})$. Plugging this into Lemma \ref{lem:forbidden_cycles_main} gives the required bound $c(V_1,\dots,V_{2k+1}) = O_k(n^{k+1/2})$.
\end{proof}

\begin{lemma}\label{lem:ex_odd_cycles}
For every $2 \leq k < \ell - 1$ we have
$\ex(n,C_{2k+1},C_{2\ell+1}) = O( (2k+1)^{2k} \ell^{k+1} n^k )$.
\end{lemma}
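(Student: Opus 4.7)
By Claim~\ref{prop:random_partition}, it suffices to show that $c(V_1,\dots,V_{2k+1}) = O(\ell^{k+1} n^k)$ for every partition $V(G) = V_1 \cup \dots \cup V_{2k+1}$ of $G$. Since $G$ is $C_{2\ell+1}$-free and the hypothesis $k < \ell - 1$ certainly gives $\ell > k$, Lemma~\ref{lem:forbidden_cycles_main} applies and yields
\begin{equation*}
c(V_1,\dots,V_{2k+1}) \leq \ell^{k-1} n^{k-2}\bigl[p(V_1,V_2,V_3,V_4) + p(V_{2k+1},V_1,V_2,V_3)\bigr].
\end{equation*}
This is the same first step as in the proof of Lemma~\ref{lem:ex_odd_even_cycles}. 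In that even-forbidden case the $P_3$-counts on the right were then bounded by Theorem~\ref{thm:ex_path_cycle}. This route is unavailable to us: $\ex(n,P_3,C_{2\ell+1}) = \Theta(n^4)$, already realised by bipartite graphs.

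To sidestep this, my plan is to redo the derivation of Lemma~\ref{lem:forbidden_cycles_main} one level lower, fixing only a single edge rather than a fixed $(V_1,V_2,V_3)$-path. For each edge $(v_1,v_2) \in E(V_1,V_2)$, the cycles through $(v_1,v_2)$ are counted by $p(N_{V_3}(v_2), V_4, \dots, V_{2k}, N_{V_{2k+1}}(v_1))$, a path count through $s=2k-1$ (odd) sets. The $C_{2\ell+1}$-freeness, combined with the edge $v_1v_2$ and the two extra edges from $v_1,v_2$ into the neighborhoods, forbids any path of length $2\ell-2$ between the first and last of these sets. Lemma~\ref{lem:paths_bound forbidden_path}, applied with $s = 2k - 1$, $t = 2\ell - 1$ (same odd parity, and $s < t$ because $k < \ell$) and $\lambda = \ell - k \geq 1$, then produces the product-form bound
\begin{equation*}
p(N_{V_3}(v_2), V_4, \dots, V_{2k}, N_{V_{2k+1}}(v_1)) \leq (\ell-k)^{k-1} n^{k-2}\bigl(|N_{V_3}(v_2)|\cdot|N_{V_{2k+1}}(v_1)| + (\ell-k) n\bigr).
\end{equation*}
Summing this estimate over the edges of $E(V_1,V_2)$ reformulates the bound on $c$ in terms of $p(V_{2k+1},V_1,V_2,V_3)$ and $e(V_1,V_2)$ instead of a $P_3$-count on $V_1,V_2,V_3,V_4$.

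The main obstacle will be the resulting $(\ell-k)^k n^{k-1}\cdot e(V_1,V_2)$ term, which in the worst case is of size $\Theta(\ell^k n^{k+1})$ and so exceeds the target by one factor of $n$. To close this gap, I intend to play the edge-fixed bound against the bound coming from Lemma~\ref{lem:forbidden_cycles_main}: the quantity $p(V_{2k+1},V_1,V_2,V_3)$ appears in both inequalities, and the product structure $|N_{V_3}(v_2)|\cdot|N_{V_{2k+1}}(v_1)|$ from the edge-fixed estimate should allow the excess $n$-factor attached to $e(V_1,V_2)$ to be traded for a factor of $\ell$, by pushing it into a $P_3$-count that is in turn controlled through Lemma~\ref{lem:forbidden_cycles_main}. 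Combined with the factor $(2k+1)^{2k}$ from Claim~\ref{prop:random_partition} and the estimate $\ell - k \leq \ell$ throughout, this should yield the stated bound $O((2k+1)^{2k}\ell^{k+1} n^k)$. The delicate point is the bookkeeping between these two inequalities so that the final $\ell$-dependence is exactly $\ell^{k+1}$ rather than $\ell^k$ or $\ell^{k+2}$.
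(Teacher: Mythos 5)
Your opening reduction and your diagnosis of the obstacle are both correct: Lemma~\ref{lem:forbidden_cycles_main} applies, and the term $p(V_1,V_2,V_3,V_4)$ (equivalently $p(V_{2k+1},V_1,V_2,V_3)$) cannot be bounded via Theorem~\ref{thm:ex_path_cycle} because the forbidden cycle is odd. Your edge-fixed estimate is also a legitimate application of Lemma~\ref{lem:paths_bound forbidden_path}. But the plan for closing the argument does not work, and the reason is that \emph{both} terms it produces are genuinely too large, not just the $e(V_1,V_2)$ term you flag. Summing your edge-fixed bound over $E(V_1,V_2)$ gives $c(V_1,\dots,V_{2k+1}) \leq (\ell-k)^{k-1}n^{k-2}\,p(V_{2k+1},V_1,V_2,V_3) + (\ell-k)^{k}n^{k-1}\,e(V_1,V_2)$, and in a $C_{2\ell+1}$-free graph one can have $e(V_1,V_2)=\Theta(n^2)$ and $p(V_{2k+1},V_1,V_2,V_3)=\Theta(n^4)$ simultaneously (a complete bipartite graph). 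The quantity $p(V_{2k+1},V_1,V_2,V_3)$ appears uncontrolled on the right-hand side of both of your inequalities, so "playing them against each other" cannot yield anything better than either one alone; there is no cancellation mechanism, and the proposal never supplies an independent bound on this path count.

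What is missing is the structural input that the paper extracts before any counting: after normalizing so that every edge of $E(V_i,V_{i+1})$ lies on some $(V_1,\dots,V_{2k+1})$-cycle, each consecutive bipartite graph $(V_i,V_{i+1})$ must be $C_{2\ell-2k+2}$-free (a cycle there plus the $(V_1,\dots,V_{2k+1})$-cycle through one of its edges would close a $C_{2\ell+1}$). This is what lets one invoke the key Lemma~\ref{lem:P3 even_cycle main}, and even then the bound $p(V_{2k+1},V_1,V_2,V_3)=O(\ell^2 n^2)$ is \emph{false} as stated: one must first excise exceptional high-degree sets $V_1'$, $V_2'$ (and $V_2''$, $V_3''$ for the other quadruple), bound the path count only on the complements, and handle the cycles meeting the exceptional sets separately using the guarantee $e(V_1',V_2),e(V_1',V_{2k+1})=O(\ell n)$ together with Lemma~\ref{lem:paths_bound forbidden_path} applied with $s=2k$, $t=2\ell$. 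This dichotomy (Naor--Verstra\"ete bound plus removal of high-degree vertices) is the real content of the proof, and your proposal contains no substitute for it.
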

\begin{proof}
	Let $G$ be an $n$-vertex $C_{2\ell+1}$-free graph.
	By Claim \ref{prop:random_partition}, we only need to prove that the bound
	$c(V_1,\dots,V_{2k+1}) \leq O( \ell^{k+1} n^k )$ holds for every partition
	$V(G) = V_1 \cup \dots \cup V_{2k+1}$.
	Fix one such partition.
	We may and will assume that for every $1 \leq i \leq 2k+1$, every edge in $E(V_i,V_{i+1})$ is on some $(V_1,\dots,V_{2k+1})$-cycle.
	We claim that the bipartite graph $(V_i,V_{i+1})$ is $C_{2\ell - 2k + 2}$-free for every
	$1 \leq i \leq 2k+1$ (with indices taken modulo $2k+1$). Assume by contradiction that there is a $(2\ell - 2k + 2)$-cycle $C$ in the bipartite graph $(V_i,V_{i+1})$, and let $e \in E(V_i,V_{i+1})$ be an arbitrary edge of $C$. By our assumption, there is a $(V_1,\dots,V_{2k+1})$-cycle $C'$ containing $e$. But now $C \cup C' \setminus \{e\}$ is a $(2\ell+1)$-cycle, a contradiction.
	
	In light of the above, we may apply Lemma \ref{lem:P3 even_cycle main} to $(V_{2k+1},V_1,V_2,V_3)$ with $\ell - k + 1 \geq 3$ in place of $\ell$ and thus obtain subsets
	$V'_1 \subseteq V_1$, $V'_2 \subseteq V_2$ satisfying
	$e(V'_1,V_{2k+1}),e(V'_1,V_2),e(V'_2,V_1),e(V'_2,V_3) = O(\ell n)$ and
	$p(V_{2k+1},V_1 \setminus V'_1, V_2 \setminus V'_2, V_3) = O(\ell^2 n^2)$.
	Similarly, applying Lemma \ref{lem:P3 even_cycle main} to $V_1,V_2,V_3,V_4$ gives subsets $V''_2 \subseteq V_2$ and
	$V''_3 \subseteq V_3$ such that
	$e(V''_2,V_1),e(V''_2,V_3),e(V''_3,V_2),e(V''_3,V_4) = O(\ell n)$ and
	$p(V_1, V_2 \setminus V''_2, V_3 \setminus V''_3, V_4) = O(\ell^2 n^2)$.
	Setting $W_1 = V_1 \setminus V'_1$,
	$W_2 = V_2 \setminus (V'_2 \cup V''_2)$ and $W_3 = V_3 \setminus V''_3$, we see that
	\begin{equation}\label{eq:cycle_bound_five_terms}
	\begin{split}
	& c(V_1,\dots,V_{2k+1}) \leq
	c(W_1,W_2,W_3,V_4,\dots,V_{2k+1}) +
	c(V'_1,V_2,\dots,V_{2k+1}) + \\ & \hspace{1cm}
	c(V_1,V'_2,V_3,\dots,V_{2k+1}) +
	c(V_1,V''_2,V_3,\dots,V_{2k+1}) +
	c(V_1,V_2,V''_3,V_4,\dots,V_{2k+1}).
	\end{split}
	\end{equation}
By our choice of $V'_1,V'_2,V''_2,V''_3$ via Lemma \ref{lem:P3 even_cycle main} and by the definition of the sets $W_1,W_2,W_3$, we have
$p(V_{2k+1},W_1,W_2,W_3) = O(\ell^2 n^2)$ and
$p(W_1,W_2,W_3,V_4) = O(\ell^2 n^2)$. 
Plugging these bounds into Lemma \ref{lem:forbidden_cycles_main} gives
$$
c(W_1,W_2,W_3,V_4,\dots,V_{2k+1}) \leq \ell^{k-1} n^{k-2} \cdot O(\ell^2 n^2) \leq
O( \ell^{k+1}n^k ).
$$

It remains to bound the other four terms in \eqref{eq:cycle_bound_five_terms}.
Consider the term $c(V'_1,V_2,\dots,V_{2k+1})$. Fixing any $v_1 \in V'_1$, note that there is no path of length $2\ell-1$ inside $V_2 \cup \dots \cup V_{2k+1}$ between a vertex in $N_{V_2}(v_1)$ and a vertex in $N_{V_{2k+1}}(v_1)$, as such a path would close a $(2\ell+1)$-cycle with $v_1$. Thus, we may apply Lemma
\ref{lem:paths_bound forbidden_path}
with $s = 2k$, $t = 2\ell$ and $N_{V_2}(v_1),V_3,\dots,V_{2k},N_{V_{2k+1}}(v_1)$ as $U_1,\dots,U_s$, to get
 $$
 c(v_1,V_2,\dots,V_{2k+1}) = p(N_{V_2}(v_1),V_3,\dots,V_{2k},N_{V_{2k+1}}(v_1)) \leq
\ell^{k}n^{k-1}\left( |N_{V_2}(v_1)| + |N_{V_{2k+1}}(v_1)|\right).
 $$
 By summing the above over all $v_1 \in V'_1$ we obtain
\begin{align*}
c(V'_1,V_2,\dots,V_{2k+1})&= \sum_{v_1 \in V'_1} c(v_1,V_2,\dots,V_{2k+1}) \leq
\ell^k n^{k-1} \cdot
\sum_{v_1 \in V'_1} {\left( |N_{V_2}(v_1)| + |N_{V_{2k+1}}(v_1)|\right) }
\\ &=
\ell^k n^{k-1} \cdot \left(  e(V'_1,V_2) + e(V'_1,V_{2k+1})\right) \leq
O( \ell^{k+1} n^k )
\;,
\end{align*}
where the last equality relies on the guarantees of Lemma
\ref{lem:P3 even_cycle main}. The remaining three terms in \eqref{eq:cycle_bound_five_terms} are shown to be
$O( \ell^{k+1} n^k )$ in the same manner. This completes the proof.
\end{proof}

Having proven Theorem \ref{thm:main}, we summarize
our upper bounds on $\ex(n,C_{2k+1},C_{2\ell+1})$ in Lemma \ref{cor:ex_odd_cycles} below. This lemma will be used in Section \ref{sec:test}. We need the well-known Even Cycle Theorem of Bondy and Simonovits:
\begin{theorem}[\cite{BS}]\label{thm:even_cycle}
For every $\ell \geq 2$ we have
$\ex(n,C_{2\ell}) \leq O(\ell n^{1 + 1/\ell})$.
\end{theorem}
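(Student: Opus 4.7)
The plan is to follow the classical Bondy--Simonovits argument. Assume for contradiction that $G$ is an $n$-vertex $C_{2\ell}$-free graph with $e(G) \geq C \ell n^{1+1/\ell}$ for an absolute constant $C$ to be chosen. First I would reduce to the large-minimum-degree case: iteratively deleting any vertex whose current degree is less than $e(G)/(2n)$ removes fewer than $e(G)/2$ edges in total, so a nonempty subgraph $H \subseteq G$ remains with $\delta(H) \geq e(G)/(2n) \geq (C/2)\ell n^{1/\ell}$, and $H$ inherits $C_{2\ell}$-freeness from $G$, so it suffices to exhibit a $C_{2\ell}$ in $H$.

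Next, fix a root $v \in V(H)$ and perform a breadth-first search, letting $V_i$ be the vertices at distance exactly $i$ from $v$ for $0 \leq i \leq \ell$. I would count the number $N$ of paths of length $\ell$ starting at $v$: at every extension step the current endpoint has at least $\delta(H) - \ell + 1 \geq \delta(H)/2$ neighbors outside the at most $\ell-1$ vertices already used on the path (provided $C$ is large enough that $\delta(H) \geq 2\ell$), giving $N \geq (\delta(H)/2)^\ell$. Since every such path ends in $V_\ell \subseteq V(H)$, a set of size at most $n$, pigeonhole supplies some $u \in V_\ell$ reached from $v$ by at least $N/n \geq (C/4)^\ell \ell^\ell$ distinct paths of length $\ell$.

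The crux is then to extract, from this large family of $v$-to-$u$ paths of length $\ell$, two that are internally vertex-disjoint: their union is then a copy of $C_{2\ell}$, contradicting the hypothesis. A naive pigeonhole on internal vertices, treating each path as an $(\ell-1)$-subset of $V(H) \setminus \{v,u\}$, is too weak — an Erd\H{o}s--Ko--Rado-style bound would force a threshold growing with $n$ and destroy the $n^{1+1/\ell}$ dependence. Instead one must exploit the layered BFS structure, analyzing for each $i$ how many vertices in $V_i$ are reachable from $v$ by two or more edge-disjoint partial paths and propagating these counts up the tree; the original Bondy--Simonovits analysis shows that once $N/n$ exceeds an explicit $\mathrm{poly}(\ell)$ threshold, two internally disjoint $v$-to-$u$ paths of length $\ell$ must exist, so choosing $C$ a sufficiently large absolute constant completes the contradiction. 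The main obstacle is precisely this extraction step, which is the technical heart of the proof and the reason a generic combinatorial (rather than BFS-based) argument fails to preserve the linear-in-$\ell$ constant in the final bound.
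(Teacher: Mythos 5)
This statement is the Bondy--Simonovits even cycle theorem, which the paper does not prove but imports directly from \cite{BS}; so the only question is whether your sketch would stand on its own. It does not: the argument has a genuine gap exactly where you place "the technical heart." Your reduction to minimum degree $\delta(H) \geq e(G)/(2n)$ and the count of at least $(\delta(H)/2)^{\ell}$ paths of length $\ell$ from $v$ are fine (though note these paths need not end in $V_\ell$ --- a path of length $\ell$ can terminate in an earlier BFS layer; harmless here, since the pigeonhole only needs at most $n$ possible endpoints). But the final step --- that once a single pair $(v,u)$ is joined by more than some threshold number of length-$\ell$ paths, two of them must be internally vertex-disjoint --- is precisely the statement you would need to prove, and you instead assert that "the original Bondy--Simonovits analysis shows" it. It does not: their proof never counts $v$-to-$u$ paths of a fixed length and extracts disjoint ones. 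It instead shows that in a $C_{2\ell}$-free graph of large minimum degree the BFS layers must expand, $|V_{i+1}| \geq \delta \, |V_i| / \mathrm{poly}(\ell)$, via a lemma guaranteeing paths of many \emph{distinct} lengths between two vertices in a suitable subgraph spanned by consecutive layers; iterating this for $\ell$ steps forces $n \geq |V_\ell| \geq (\delta/\mathrm{poly}(\ell))^{\ell}$, which is the source of the $n^{1/\ell}$ bound. So the lemma you are leaning on is both unproved and misattributed.

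Moreover, the intermediate claim as you state it is not obviously true and certainly does not follow from anything you wrote: a family of length-$\ell$ paths from $v$ to $u$, all pairwise internally intersecting, can in principle be very large (for instance, all passing through one common internal vertex $w$, with the two halves ranging over many subpaths), so no threshold on the count that is independent of $n$ yields two disjoint paths by a purely counting argument. Ruling this out requires using the $C_{2\ell}$-freeness and the layered structure in an essential way, and that is the entire content of the theorem. As it stands, your proposal reduces the theorem to an unproved statement that is at least as hard as the theorem itself. Either carry out the layer-expansion argument of Bondy and Simonovits (or one of its modern streamlinings, e.g.\ Pikhurko's), or simply cite \cite{BS} as the paper does.
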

\begin{lemma}\label{cor:ex_odd_cycles}
There is an absolute constant $c$ such that for every $1 \leq k < \ell$ we have the following.
\begin{equation*}
\ex(n,C_{2k+1},C_{2\ell+1}) \leq
\begin{cases}
c \ell^2 n^{1+1/\ell} & k = 1, \\
c (2k+1)^{2k} (2\ell+1)^{k+1} n^k & k \geq 2.
\end{cases}
\end{equation*}
\end{lemma}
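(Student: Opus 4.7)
The plan is simply to combine three previously established bounds, corresponding to three exhaustive cases of the pair $(k,\ell)$ with $1 \leq k < \ell$.

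First, for the case $k = 1$, I would invoke the known inequality \eqref{eq:C3} from the introduction, namely $\ex(n,C_3,C_{2\ell+1}) \leq O(\ell \cdot \ex(n,C_{2\ell}))$, together with the Bondy--Simonovits estimate (Theorem \ref{thm:even_cycle}) $\ex(n,C_{2\ell}) \leq O(\ell n^{1+1/\ell})$. Multiplying these yields $\ex(n,C_3,C_{2\ell+1}) \leq O(\ell^2 n^{1+1/\ell})$, which is exactly the required bound with a suitable absolute constant $c$.

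Next, for the range $k \geq 2$ and $\ell > k+1$ (equivalently $k < \ell - 1$), Lemma \ref{lem:ex_odd_cycles} directly gives $\ex(n,C_{2k+1},C_{2\ell+1}) = O((2k+1)^{2k} \ell^{k+1} n^k)$, and since $\ell^{k+1} \leq (2\ell+1)^{k+1}$ this is absorbed into the claimed bound $c(2k+1)^{2k}(2\ell+1)^{k+1}n^k$.

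The remaining case is the ``consecutive odd'' case $k \geq 2$, $\ell = k+1$, which is not covered by Lemma \ref{lem:ex_odd_cycles}. Here I would invoke Lemma \ref{lem:2k+1, 2k+3}, which supplies $\ex(n,C_{2k+1},C_{2k+3}) \leq (2k+1)^{2k} 2^{2k+1} n^k$. To match the uniform form of the statement, I need $2^{2k+1} \leq C \cdot (2\ell+1)^{k+1} = C(2k+3)^{k+1}$ for an absolute constant $C$; this is immediate since $(2k+3)^{k+1} \geq 7 \cdot 7^k \geq 2^{2k+1}$ for every $k \geq 2$. Absorbing the constant into $c$ yields the desired bound.

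There is no real obstacle here: the statement is a book-keeping consolidation of Lemmas \ref{lem:2k+1, 2k+3} and \ref{lem:ex_odd_cycles} together with \eqref{eq:C3} and Theorem \ref{thm:even_cycle}. The only mild care required is verifying that the three case-specific constants can be absorbed into a single absolute $c$, which reduces to the elementary inequalities $\ell \leq 2\ell+1$ and $2^{2k+1} \leq (2k+3)^{k+1}$ recorded above.
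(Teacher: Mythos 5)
Your proposal is correct and follows exactly the paper's own argument: the case $k=1$ from \eqref{eq:C3} and Theorem \ref{thm:even_cycle}, the case $2 \leq k < \ell-1$ from Lemma \ref{lem:ex_odd_cycles}, and the consecutive case $\ell = k+1$ from Lemma \ref{lem:2k+1, 2k+3}. Your explicit verification that $2^{2k+1} \leq (2k+3)^{k+1}$ is a small extra check the paper leaves implicit, but otherwise the two proofs coincide.
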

\begin{proof}
The case $k=1$ follows immediately by combining \eqref{eq:C3} with Theorem \ref{thm:even_cycle}. As for the case $k \geq 2$,
recall that by Lemma \ref{lem:ex_odd_cycles} we have
$\ex(n,C_{2k+1},C_{2\ell+1}) \leq O( (2k+1)^{2k} (2\ell+1)^{k+1} n^k )$
for every $2 \leq k < \ell - 1$. In light of Lemma \ref{lem:2k+1, 2k+3}, this bound holds for
$\ell = k+1$ as well (as $2\ell+1 = 2k+3 > 4$).
\end{proof}

\subsection{Proof of Theorem \ref{thm:ex_path_cycle} and Proposition \ref{prop:C3_even_cycle}}\label{subsec:path_thm_proof}
\noindent
Here we prove Theorem \ref{thm:ex_path_cycle} and Proposition \ref{prop:C3_even_cycle}. For Theorem \ref{thm:ex_path_cycle} we will need the following lemma. 
\begin{lemma}\label{lem:P2_even_cycle}
	Let $\ell \geq 2$ and let $G$ be an $n$-vertex $C_{2\ell}$-free graph.
	Then every $v \in V(G)$ is the endpoint of at most $4(\ell - 1) n$ paths of length 2.
\end{lemma}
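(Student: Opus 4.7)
The plan is to bound $P_2(v)$, the number of paths of length $2$ starting at $v$, by a simple double-counting argument split on the middle vertex. Let $N := N(v)$ and $M := V(G) \setminus (N \cup \{v\})$. Every path of length $2$ from $v$ has the form $v - u - w$ with $u \in N$ and $w \in N(u) \setminus \{v\}$, so
$P_2(v) = \sum_{u \in N}(d(u) - 1) = 2\,e(G[N]) + e(N, M),$
where $e(G[N])$ denotes the number of edges of $G$ inside $N$ and $e(N, M)$ denotes the number of edges of $G$ between $N$ and $M$. My goal will therefore be to bound each of these two quantities separately.

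The key observation is the following consequence of $C_{2\ell}$-freeness: since $v$ is adjacent to every vertex of $N$, the graph $G - v$ cannot contain any path of length $2\ell - 2$ with both endpoints in $N$, for such a path would close with $v$ into a copy of $C_{2\ell}$. From this I plan to extract two forbidden-path conditions. First, $G[N]$ is $P_{2\ell-2}$-free, since any $P_{2\ell-2}$ in $G[N]$ lies in $G - v$ and automatically has both endpoints in $N$; by the Erd\H{o}s--Gallai theorem (Theorem \ref{thm:EG}) this gives $e(G[N]) \le \tfrac{2\ell-3}{2}|N|$. Second, the bipartite graph $B$ between $N$ and $M$ is $P_{2\ell-1}$-free: a path of odd length $2\ell - 1$ in a bipartite graph has its two endpoints on opposite sides, so removing the endpoint lying in $M$ produces a $P_{2\ell-2}$ in $G - v$ with both endpoints in $N$, which is forbidden. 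Erd\H{o}s--Gallai applied to $B$ then gives $e(N, M) \le (\ell - 1)(|N| + |M|)$.

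Combining these bounds and using $|N|, |N| + |M| \le n$ yields $P_2(v) \le (2\ell - 3)|N| + (\ell - 1)(|N| + |M|) \le (3\ell - 4)n \le 4(\ell - 1)n$ for $\ell \ge 2$, as claimed. I do not expect any real obstacle; the only subtle step is the parity argument used to convert the restriction on $P_{2\ell-2}$'s between $N$-vertices into the cleaner statement that $B$ contains no $P_{2\ell-1}$. It is worth noting that the linearity of the target bound in $n$ is precisely what lets me proceed using only Erd\H{o}s--Gallai rather than a deeper even-cycle extremal bound such as Bondy--Simonovits.
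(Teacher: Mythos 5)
Your proof is correct. It takes a somewhat different route from the paper's: the paper argues by contradiction, randomly partitioning $V(G)\setminus\{v\}$ into two parts $V_1,V_2$ so that (in expectation) a quarter of the length-$2$ paths ending at $v$ survive as edges of the bipartite graph between $N_{V_1}(v)$ and $V_2$, and then invokes the $s=2$ case of Lemma \ref{lem:paths_bound forbidden_even_cycle}, which is exactly the Erd\H{o}s--Gallai/closing-a-cycle argument. You instead decompose deterministically, writing the count as $2e(G[N])+e(N,M)$ with $N=N(v)$ and $M=V(G)\setminus(N\cup\{v\})$, and apply Erd\H{o}s--Gallai (Theorem \ref{thm:EG}) twice: once to $G[N]$ (which is $P_{2\ell-2}$-free) and once to the bipartite graph $(N,M)$ (which is $P_{2\ell-1}$-free by the parity observation). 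Both proofs rest on the same key fact --- a path of length $2\ell-2$ with both endpoints in $N(v)$ closes a $C_{2\ell}$ with $v$ --- but yours avoids the probabilistic averaging and is self-contained, while the paper's reuses machinery already set up for the general induction on $s$. Your constant $(3\ell-4)n$ is in fact slightly better than the stated $4(\ell-1)n$, so the bound follows comfortably.
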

\begin{proof}
	Let $v \in V(G)$ and assume, by contradiction, that $v$ is the endpoint of
	$r > 4(\ell - 1) n$ paths of length $2$. Let
	$V(G) \setminus \{v\} = V_1 \cup V_2$ be a random partition, obtained by putting each
	$u \in V(G) \setminus \{v\}$ in one of the sets $V_1,V_2$ with probability $\frac{1}{2}$, independently. Since
	$\mathbb{E}\left[ p(v,V_1,V_2) \right] = \frac{1}{4}r$, there is a choice of $V_1,V_2$ for which
	$e(N_{V_1}(v),V_2) = p(v,V_1,V_2) \geq \frac{1}{4}r > (\ell - 1)n >
	(\ell - 1)\left( |N_{V_1}(v)| + |V_2|\right)$. This stands in contradiction to Lemma \ref{lem:paths_bound forbidden_even_cycle}, applied with
	$s = 2, u_0 = v, U_1 = N_{V_1}(v), U_2 = V_2$.
\end{proof}
\begin{proof}[Proof of Theorem \ref{thm:ex_path_cycle}]
	The lower bounds are proved in Section \ref{sec:lower}: the lower bound for $\ell=2$ is given by Lemma \ref{lem:C4}, and the lower bound for $\ell \geq 3$ is given by Corollary \ref{cor:lower_bounds}.
	Thus, it remains to prove the upper bounds. We prove both cases simultaneously by induction on $k$. The base cases are $k = 2,3$. For $k = 2$, Lemma \ref{lem:P2_even_cycle} implies that
	$\ex(n,P_2,C_{2\ell}) = O(\ell n^2)$, as required.
	
	Suppose now that $k=3$.
	We first handle the case $\ell \geq 3$.
	By Claim \ref{prop:random_partition}, it is enough to show that $p(X,Y,Z,W) \leq O(\ell^2 n^2)$ for every vertex-partition
	$X \cup Y \cup Z \cup W$ of an $n$-vertex $C_{2\ell}$-free graph.
	Let $Y' \subseteq Y$ and $Z' \subseteq Z$ be as in Lemma \ref{lem:P3 even_cycle main}.
	In light of Item 2 in Lemma \ref{lem:P3 even_cycle main}, it is enough to prove that
	$p(X,Y',Z,W) = O(\ell^2 n^2)$ and $p(X,Y,Z',W) = O(\ell^2 n^2)$. Fix any $y \in Y'$. By Lemma \ref{lem:P2_even_cycle}, we have
	$p(y,Z,W) = O(\ell n)$, and hence
	$p(X,y,Z,W) \leq O(\ell n) \cdot |N_X(y)|$. By summing over all $y \in Y'$ and using the guarantees of Lemma \ref{lem:P3 even_cycle main}, we get
	\begin{equation*}
	p(X,Y',Z,W) = \sum_{y \in Y'}{p(X,y,Z,W)} \leq
	O(\ell n) \cdot \sum_{y \in Y'}{|N_X(y)|} = O(\ell n) \cdot e(Y',X) \leq O(\ell^2 n^2).
	\end{equation*}
	The bound $p(X,Y,Z',W) = O(\ell^2 n^2)$ is proven similarly.
	
	Now we handle the case $\ell = 2$. Let $G$ be an $n$-vertex $C_4$-free graph.
	Observe that the number of paths of length $3$ in a graph $G$ is at most $\sum_{v \in V(G)}{\#P_1(v) \cdot \#P_2(v)}$, where $\#P_i(v)$ is the number of paths of length $i$ having $v$ as an endpoint (so $\#P_1(v)$ is just the degree of $v$). By combining Lemma \ref{lem:P2_even_cycle} with Theorem \ref{thm:even_cycle}, we get that
	$\sum_{v \in V(G)}{\#P_1(v) \cdot \#P_2(v)}
	\leq O(n) \cdot 2e(G) \leq O(n^{5/2})$,
	as required.
	
	Let now $k \geq 4$. Let $G$ be an $n$-vertex $C_{2\ell}$-free graph, and observe that the number of paths of length $k$ in $G$ is at most
	$$
	\sum_{v \in V(G)}{\#P_{k-2}(v) \cdot \#P_2(v)} \leq
	O(\ell n) \sum_{v \in V(G)}{\#P_{k-2}(v)} \leq
	O(\ell n) \cdot \ex(n,P_{k-2},C_{2\ell}),
	$$
	where in the first inequality we used Lemma \ref{lem:P2_even_cycle}. Thus,
	$\ex(n,P_k,C_{2\ell}) \leq O(\ell n) \cdot \ex(n,P_{k-2},C_{2\ell})$.
	It is now easy to see that the theorem follows by induction on $k$, with the base cases $k = 2,3$.
\end{proof}
\begin{proof}[Proof of Proposition \ref{prop:C3_even_cycle}]
We start with the lower bound. For $\ell \geq 3$, this is the statement of Claim \ref{prop:C3_even_cycle_lower_bound}.
For $\ell = 2$, we get it from Lemma \ref{lem:C4} and the well-known fact that $\ex(n,C_4) = O(n^{3/2})$ (see Theorem \ref{thm:even_cycle}).
For the upper bound, let $G$ be an $n$-vertex $C_{2\ell}$-free graph, and observe that for every $v \in V(G)$, the neighbourhood of $v$ does not contain a path of length $2\ell - 2$; indeed, such a path would close a copy of $C_{2\ell}$ with $v$. By Theorem \ref{thm:EG} we have $e(N(v)) \leq \frac{2\ell - 3}{2} \cdot |N(v)|$. On the other hand, the number of triangles containing $v$ is exactly $e(N(v))$, so the number of triangles in $G$ is
$$
\frac{1}{3}\sum_{v \in V(G)}{e(N(v))} \leq
\frac{2\ell - 3}{6}\sum_{v \in V(G)}{|N(v)|} = \frac{2\ell - 3}{3} \cdot e(G)
\leq \frac{2\ell - 3}{3} \cdot \ex(n,C_{2\ell})\;,
$$
thus completing the proof.
\end{proof}

\section{Applications of Theorem \ref{thm:main}}\label{sec:test}
In this section we prove Theorems \ref{thm:hierarchy},  \ref{thm:cycle_sequence_charac}  and \ref{thm:two_sided_test}.
Given a monotone graph property $\mathcal{P}$ and $\varepsilon \in (0,1)$, recall that $w_{\mathcal{P}}(\varepsilon)$ is the minimal positive integer such that for every sufficiently large graph $G$ which is $\varepsilon$-far from satisfying $\mathcal{P}$, a randomly-chosen induced subgraph of $G$ of order $w_{\mathcal{P}}(\varepsilon)$ does not satisfy $\mathcal{P}$ with probability at least $\frac{2}{3}$. Recall that for a set of integers $L$, we say
that a graph is $L$-free if it is $C_{\ell}$-free for every $\ell \in L$. In this section we will only consider cases where $L$ is an infinite set of odd integers. Recall that when writing $w_{L}(\varepsilon)$ we interpret $L$ as the {\em property} of being $L$-free.
In what follows, $c,c',c'',c_1,c_2,\dots$ are absolute constants which are implicitly assumed to be large enough.

The following theorem is a special case of the main result of Alon et al. \cite{ADKK}. For a graph $G$, denote by $\maxcut(G)$ the largest size of a cut in $G$.
\begin{theorem}\label{thm:ADKK}
For every
$\varepsilon \in (0,1/2)$, for every $n$-vertex graph $G$ and for every
$q \geq c\varepsilon^{-4}\log(1/\varepsilon)$,
a uniformly chosen set $Q \in \binom{V(G)}{q}$
satisfies
$
\left| \frac{\maxcut(G)}{n^2} - \frac{\maxcut(G[Q])}{q^2} \right| < \varepsilon
$
with probability at least $\frac{5}{6}$.
\end{theorem}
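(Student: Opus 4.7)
The plan is to bound $|\maxcut(G[Q])/q^2 - \maxcut(G)/n^2|$ in each direction separately, each with failure probability at most $1/12$, and then combine via a union bound.

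The easier direction, $\maxcut(G[Q])/q^2 \geq \maxcut(G)/n^2 - \varepsilon/2$, is handled by fixing an optimal cut $(A^*, B^*)$ of $G$ and considering its restriction $(A^* \cap Q, B^* \cap Q)$ to $Q$. The resulting cut size is a sum of edge indicators over random pairs of $Q$ (sampled without replacement). Normalized by $q^2$, this quantity is an $O(1/q)$-bounded-differences function of the random sample, so by McDiarmid's inequality (or a Hoeffding-type bound for hypergeometric sums) it concentrates within $\varepsilon/2$ of its expectation, which is $\tfrac{q(q-1)}{n(n-1)} e(A^*,B^*) = \maxcut(G)\cdot q^2/n^2 + o(q^2)$. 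This requires only $q = \Omega(\varepsilon^{-2}\log(1/\varepsilon))$, well below the stated bound.

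The harder direction, $\maxcut(G[Q])/q^2 \leq \maxcut(G)/n^2 + \varepsilon/2$, must hold simultaneously for all $2^q$ candidate partitions of $Q$, so a naive union bound is useless. The technique I would adopt, going back to Arora--Karger--Karpinski for dense-instance approximation and refined in \cite{ADKK}, is a \emph{greedy-extension net}. Within $Q$, fix a secondary sub-sample $R$ of size $r = \Theta(\varepsilon^{-2}\log(1/\varepsilon))$. For each of the $2^r$ partitions $(A_R, B_R)$ of $R$, define a greedy extension $(A, B)$ of $V(G)$ by placing each $v \notin R$ on the side maximizing its number of neighbors on the opposite side of $R$. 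The key claim to establish is then: with high probability over $Q$ and $R$, for every partition $(A', B')$ of $Q$, if $(A, B)$ denotes the greedy extension of $(A' \cap R, B' \cap R)$, then $e(A', B')/q^2 \leq e(A, B)/n^2 + \varepsilon/2$. Since $e(A, B) \leq \maxcut(G)$ for any such $(A,B)$, the desired upper bound follows immediately.

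The main obstacle is this key claim: one must argue that a small ``type'' of each partition of $Q$ (its restriction to $R$) already determines, via the greedy rule, an extension to $V(G)$ whose normalized cut value is at least that of the original partition up to $\varepsilon/2$. Establishing this uses a two-level Hoeffding argument: first on the independent greedy choices over $V(G)\setminus R$ relative to the fixed $R$-partition, and then over the further random pairs in $(Q\setminus R)^2$. The union bound over the $2^r$ partitions of $R$ together with a Hoeffding failure probability of $\exp(-\Omega(\varepsilon^2 q))$ forces $2^r \cdot \exp(-\Omega(\varepsilon^2 q)) \leq 1/24$, giving precisely $q = \Omega(r/\varepsilon^2) = \Omega(\varepsilon^{-4}\log(1/\varepsilon))$, which matches the stated sample size. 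Combining both directions by a union bound yields the claimed bound with probability at least $5/6$.
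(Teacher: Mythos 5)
First, a point of order: the paper does not prove Theorem \ref{thm:ADKK} at all --- it is quoted as a special case of the main result of \cite{ADKK}, so there is no in-paper argument to compare yours against. Your outline is the standard exhaustive-sampling paradigm from that literature (restrict the optimal cut for the easy direction; a net of greedy extensions of the $2^r$ partitions of a sub-sample $R$ for the hard direction), and the parameter accounting $q = \Omega(r/\varepsilon^2) = \Omega(\varepsilon^{-4}\log(1/\varepsilon))$ is the right one. So the strategy is sound and matches the known proofs.

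As a proof, however, the proposal is incomplete exactly where you flag ``the main obstacle''. The key claim --- that for every partition $(A',B')$ of $Q$, the greedy extension $(A,B)$ of its trace on $R$ satisfies $e(A',B')/q^2 \le e(A,B)/n^2 + \varepsilon/2$ --- is the entire content of the hard direction, and the phrase ``a two-level Hoeffding argument'' does not establish it. Two genuine difficulties are glossed over. (i) The greedy rule assigns $v$ by comparing $|N(v)\cap A'\cap R|$ with $|N(v)\cap B'\cap R|$; these are estimates, based on $R$, of $v$'s neighborhood profile with respect to a partition that is not fixed in advance but is chosen adversarially after $Q$ and $R$ are drawn, so a per-vertex Hoeffding bound plus a union bound over partitions of $Q$ is unavailable --- one may union-bound only over the $2^r$ traces, and must then argue that for each fixed trace the greedy extension simultaneously dominates \emph{all} cuts of $Q$ with that trace. (ii) Even granting accurate per-vertex estimates, reassigning many vertices of $Q\setminus R$ from their $(A',B')$-side to their greedy side changes the ``opposite side'' against which each vertex's gain was measured, so the individual gains do not simply add; the standard proofs handle this by comparing both cuts to a common linearized objective evaluated against the fixed $R$-partition. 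Without these two steps carried out, the proposal is a correct roadmap to the proof in \cite{ADKK} rather than a proof.
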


\noindent
We now derive the following lemma from Theorem \ref{thm:ADKK}.

\begin{lemma}\label{thm:bipartite_distance}
For every $\varepsilon \in (0,1)$ and for every graph $G$ which is $\varepsilon$-far from bipartiteness, it holds that with probability at least $\frac{2}{3}$, a random induced subgraph of $G$ of order
$c \varepsilon^{-5}$ is $\frac{\varepsilon}{2}$-far from bipartiteness.
\end{lemma}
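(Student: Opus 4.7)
The plan is to reformulate ``$\varepsilon$-far from bipartiteness'' in terms of $\maxcut$ and then combine Theorem \ref{thm:ADKK} with a straightforward concentration bound for the edge count. Specifically, a graph $H$ on $m$ vertices is $\varepsilon$-far from bipartite precisely when $e(H) - \maxcut(H) \geq \varepsilon m^2$, since $e(H) - \maxcut(H)$ is exactly the minimum number of edges one must remove from $H$ to make it bipartite. Hence, writing $q = c\varepsilon^{-5}$ and letting $Q \in \binom{V(G)}{q}$ be the random sample, our goal is to show that with probability at least $2/3$, both $\maxcut(G[Q])/q^2$ and $e(G[Q])/q^2$ are close enough to their $G$-counterparts that the inequality $e(G[Q]) - \maxcut(G[Q]) \geq (\varepsilon/2)q^2$ is inherited from $e(G) - \maxcut(G) \geq \varepsilon n^2$.

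Concretely, I will apply Theorem \ref{thm:ADKK} with approximation parameter $\varepsilon/4$; since $c\varepsilon^{-5} \geq c'(\varepsilon/4)^{-4}\log(4/\varepsilon)$ for $c$ sufficiently large, the theorem guarantees that with probability at least $5/6$,
\begin{equation*}
\frac{\maxcut(G[Q])}{q^2} \leq \frac{\maxcut(G)}{n^2} + \frac{\varepsilon}{4}.
\end{equation*}
In parallel, I will control $e(G[Q])/q^2$ via a standard concentration argument: the random variable $e(G[Q])$ (with $Q$ a uniform $q$-subset) has expectation $e(G)\binom{q}{2}/\binom{n}{2}$, and changing one element of $Q$ alters $e(G[Q])$ by at most $q$, so an Azuma/McDiarmid-type bound for sampling without replacement yields $|e(G[Q])/q^2 - e(G)/n^2| < \varepsilon/4$ with probability at least $5/6$ once $q \gtrsim \varepsilon^{-2}$, which is comfortably satisfied by $q = c\varepsilon^{-5}$ (and the $O(1/n)$ error coming from comparing $\binom{q}{2}/\binom{n}{2}$ to $q^2/n^2$ is negligible for $n$ large enough, per the ``sufficiently large $G$'' hypothesis in the definition of $w_{\mathcal{P}}$).

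A union bound then gives both inequalities simultaneously with probability at least $2/3$, so that
\begin{equation*}
\frac{\maxcut(G[Q])}{q^2} \leq \frac{\maxcut(G)}{n^2} + \frac{\varepsilon}{4} \leq \frac{e(G)}{n^2} - \varepsilon + \frac{\varepsilon}{4} \leq \frac{e(G[Q])}{q^2} - \frac{\varepsilon}{2},
\end{equation*}
where the middle step uses the $\varepsilon$-farness hypothesis $\maxcut(G)/n^2 \leq e(G)/n^2 - \varepsilon$. Rearranging yields $e(G[Q]) - \maxcut(G[Q]) \geq (\varepsilon/2)q^2$, which is exactly the statement that $G[Q]$ is $\varepsilon/2$-far from bipartite.

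There is no real obstacle here: the whole argument is a transfer principle that leverages Theorem \ref{thm:ADKK} as a black box. The only minor point requiring care is the concentration of $e(G[Q])/q^2$, which is routine (Chebyshev or McDiarmid), and the choice of approximation parameter $\varepsilon/4$ in each of the two estimates so that the slack of $\varepsilon - \varepsilon/4 - \varepsilon/4 = \varepsilon/2$ survives. The exponent $5$ in $c\varepsilon^{-5}$ is dictated by the $\varepsilon^{-4}\log(1/\varepsilon)$ requirement of Theorem \ref{thm:ADKK}, and we simply absorb the logarithmic factor into an extra power of $\varepsilon^{-1}$.
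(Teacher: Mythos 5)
Your proposal is correct and follows essentially the same route as the paper: rewrite $\varepsilon$-farness from bipartiteness as $e(\cdot)-\maxcut(\cdot)$ being large, apply Theorem \ref{thm:ADKK} with parameter $\varepsilon/4$, control the edge density of the sample with a standard concentration bound (the paper uses a second-moment argument where you invoke McDiarmid, an immaterial difference), and finish with a union bound. No gaps.
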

\begin{proof}
Let $G$ be a graph which is $\varepsilon$-far from bipartiteness. Then clearly
$$\maxcut(G) \leq e(G) - \varepsilon n^2 =
\left( \frac{e(G)}{n^2} - \varepsilon \right)n^2.$$
Set $q = c \varepsilon^{-5}$ and let $Q \in \binom{V(G)}{q}$ be chosen uniformly at random.
Then with probability at least $\frac{5}{6}$ we have
$\maxcut(G[Q]) \leq (\frac{e(G)}{n^2} - \frac{3\varepsilon}{4})q^2$,
where we applied Theorem \ref{thm:ADKK} with $\frac{\varepsilon}{4}$ in place of $\varepsilon$. By a standard second-moment-method argument one can easily show that a randomly chosen induced subgraph of order at least $c \varepsilon^{-2}$ has the same edge density as $G$, up to an additive error of $\varepsilon$. Thus, (by applying this argument with $\varepsilon/4$ in place of $\varepsilon$), the inequality
$$
\left| \frac{e(G)}{n^2} - \frac{e(G[Q])}{q^2} \right| < \frac{\varepsilon}{4}
$$
holds with probability at least $\frac{5}{6}$. Thus, with probability at least $\frac{2}{3}$ we have
$$\maxcut(G[Q]) \leq \left( \frac{e(G)}{n^2} - \frac{3\varepsilon}{4} \right)q^2 \leq
e(G[Q]) - \frac{\varepsilon}{2}q^2,$$
which implies that $G[Q]$ is $\frac{\varepsilon}{2}$-far from bipartiteness. This completes the proof.
\end{proof}
The next lemma we will need is Lemma \ref{cor:k-cycle_sample} below, which relies on Lemma \ref{thm:bipartite_distance} and on the following theorem of Koml\'{o}s.
\begin{theorem}[\cite{Komlos}]\label{thm:Komlos}
For every $\varepsilon \in (0,1/2)$, every graph which is $\varepsilon$-far from bipartiteness contains an odd cycle of length at most
$c \varepsilon^{-1/2}$.
\end{theorem}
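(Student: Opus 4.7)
My plan is to prove the contrapositive: any $n$-vertex graph $G$ with odd-girth greater than $L = 2k+1$ must be at most $O(n^2/k^2)$-close to being bipartite. Once established, setting this bound below $\varepsilon n^2$ forces $k \geq \Omega(\varepsilon^{-1/2})$, so that some odd cycle of length at most $2k+1 = O(\varepsilon^{-1/2})$ must exist.

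For any fixed $v \in V(G)$, the BFS layers $L_0^v, L_1^v, \ldots$ from $v$ induce a bipartition $(A_v, B_v)$ according to the parity of the layer, and the edges violating this bipartition are precisely those sitting inside a single layer $L_i^v$. Since $G$ has odd girth greater than $2k+1$, no such edge can occur with $i \leq k$: any intra-layer edge in $L_i^v$ together with the two BFS paths back to $v$ forms a closed walk of odd length $2i+1$, which contains an odd cycle of length at most $2i+1$ and contradicts the odd-girth hypothesis. Hence all violating edges lie in layers of depth $i > k$, and the bipartite distance of $G$ is at most the number of such ``deep'' intra-layer edges for any choice of $v$.

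To exhibit a good $v$, I would average over $V(G)$ and swap summations:
$$\sum_{v \in V(G)} |\{\text{violating edges for } v\}| \;=\; \sum_{e=(u,w) \in E(G)} |\{v : d(v,u) = d(v,w) > k\}|.$$
It then suffices to prove the per-edge estimate $|\{v : d(v,u) = d(v,w) > k\}| \leq O(n/k^2)$, since combined with $|E(G)| \leq n^2/2$ this yields an average of $O(n^2/k^2)$ violating edges per $v$. The intuition is that odd-girth greater than $2k+1$ forces a locally bipartite structure in $G$, so that $d(v,u) = d(v,w)$ (for an edge $(u,w)$) is quite restrictive: the two BFS shortest paths from $v$ to $u$ and to $w$ share a common initial segment ending at some ``branching point'' $z$, and odd-girth forces $z$ to lie at depth $< i - k$ below $v$. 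Summing this restriction over all candidate $z$'s is meant to yield the desired saving.

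The main obstacle is establishing the sharp $O(n/k^2)$ per-edge count---the naive bound is $n$, which yields only $L = O(\varepsilon^{-1})$. I expect the correct argument to go either spectrally, by showing that odd-girth greater than $2k+1$ forces a gap $\lambda_1 - |\lambda_n| \geq \Omega(1/k)$ in the adjacency spectrum (so that $\trace(A^L) > 0$ for $L = \Omega(\varepsilon^{-1/2})$, which guarantees an odd closed walk and hence a short odd cycle), or combinatorially, by exploiting the depth constraint on branching points via a second-moment argument that simultaneously captures the depth of branching and the spreading of BFS layer sizes.
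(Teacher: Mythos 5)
The statement you set out to prove is quoted in the paper directly from Koml\'os's article ``Covering odd cycles''; the paper contains no proof of it, so your argument has to stand entirely on its own. Your reduction is correct as far as it goes: the BFS-layer bipartition, the observation that the violating edges are exactly the intra-layer ones, the fact that odd girth greater than $2k+1$ pushes every intra-layer edge to depth greater than $k$, and the double count
$\sum_{v}\left|\{\text{violating edges for }v\}\right|=\sum_{(u,w)\in E}\left|\{v: d(v,u)=d(v,w)>k\}\right|$.
But this is the standard easy argument, and as you yourself note it only yields the bound $O(\varepsilon^{-1})$ without a further idea; the entire content of Koml\'os's theorem is the improvement to $\varepsilon^{-1/2}$.

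That further idea is missing, and worse, the per-edge estimate you propose as the sufficient condition is false. Take the balanced blow-up of $C_{2k+3}$ with parts $V_0,\dots,V_{2k+2}$ of size $m=n/(2k+3)$; its odd girth is $2k+3>2k+1$. For an edge $(u,w)$ with $u\in V_0$ and $w\in V_1$, the set $\{v: d(v,u)=d(v,w)\}$ is exactly the antipodal part $V_{k+2}$ (with common distance $k+1>k$), so the per-edge count is $\Theta(n/k)$ for \emph{every} edge, not $O(n/k^2)$. The totals still come out to $O(n^3/k^2)$ in this example only because the graph has merely $\Theta(n^2/k)$ edges; hence a uniform per-edge bound combined with the trivial $|E|\le n^2/2$ cannot work, and any correct argument must couple the number of edges with the per-edge counts (or proceed quite differently, as Koml\'os does). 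Neither fallback you sketch is carried out: in particular, odd girth greater than $2k+1$ is not known to force a gap $\lambda_1-|\lambda_n|=\Omega(1/k)$ in the unnormalized adjacency spectrum, and edit-distance farness from bipartiteness does not translate directly into such a spectral statement. As it stands, the proposal establishes only the $O(\varepsilon^{-1})$ bound and not the theorem.
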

\begin{lemma}\label{cor:k-cycle_sample}
Let
$\varepsilon \in (0,1)$, suppose that
$n \geq q \geq c_1 \varepsilon^{-11}$
and let $G$ be an $n$-vertex graph. If $G$ is $\varepsilon$-far from being bipartite then there is an odd $3 \leq s \leq c_1 \varepsilon^{-1/2}$ such that with probability at least $\frac{2}{3}$, a random induced subgraph of $G$ of order $q$ contains at least
$\left( \varepsilon^{6} q/c_1 \right)^{s}$
copies of $C_{s}$.
\end{lemma}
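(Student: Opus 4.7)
The plan combines Lemma~\ref{thm:bipartite_distance}, Theorem~\ref{thm:Komlos}, and a standard second-moment concentration argument for induced subgraph counts. Crucially, the parameter $s$ is chosen as a function of $G$ alone (not of the sample $Q$).

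First, I would apply Lemma~\ref{thm:bipartite_distance}, extended from the stated sample size $c\varepsilon^{-5}$ to any $q \geq c\varepsilon^{-5}$ by a minor modification of its proof (both Theorem~\ref{thm:ADKK} and the second-moment edge-density step only require that $q$ exceed polynomial thresholds in $1/\varepsilon$). Since $q \geq c_1\varepsilon^{-11}$ comfortably exceeds $c\varepsilon^{-5}$, this gives that $G[Q]$ is $\varepsilon/2$-far from being bipartite with probability at least $5/6$.

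Next, I would pick $s$ to be the smallest odd integer in $[3, c_1\varepsilon^{-1/2}]$ for which $\#C_s(G)$ exceeds a threshold $T_s$ calibrated so that, after multiplying by the hypergeometric factor $(q/n)^s$, one obtains $\mathbb{E}[\#C_s(G[Q])] \geq 2(\varepsilon^6 q/c_1)^s$. To show such $s$ exists, I would argue by iterative edge-destruction: if $\#C_{s'}(G) < T_{s'}$ for every odd $s' \leq c_1\varepsilon^{-1/2}$, then at most $\sum_{s'}T_{s'}$ edges suffice (one per remaining copy, iterated over $s'$) to destroy every odd cycle of length at most $c_1\varepsilon^{-1/2}$. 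Provided the total number of edges removed is less than $\varepsilon n^2/4$, the residual graph is still $\varepsilon/2$-far from bipartite yet has no odd cycle of length at most $c_1\varepsilon^{-1/2}$, contradicting Theorem~\ref{thm:Komlos} applied with $\varepsilon/2$ in place of $\varepsilon$ (so the Komlós bound is still strictly below $c_1\varepsilon^{-1/2}$).

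Finally, once $s$ is fixed, a second-moment (Chebyshev) argument on the hypergeometric subgraph count $\#C_s(G[Q])$ transfers the density lower bound from $G$ to $G[Q]$, yielding $\#C_s(G[Q]) \geq (\varepsilon^6 q/c_1)^s$ with probability at least $5/6$; combining with the first step via a union bound gives the claimed probability of $2/3$. The main obstacle is calibrating the per-$s$ thresholds $T_s$ in the destruction step, since the target count $(\varepsilon^6 q/c_1)^s$ (scaled back to $G$) grows exponentially in $s$, whereas the edge-removal budget $\varepsilon n^2/4$ is independent of $s$. Balancing these two quantities across the range $s \leq c_1\varepsilon^{-1/2}$ is where the particular exponents in the statement originate: the exponent $6$ in $\varepsilon^6 q/c_1$ is the largest density one can extract via this balance, and the exponent $11$ in $q \geq c_1\varepsilon^{-11}$ is what makes the second-moment concentration succeed at that density.
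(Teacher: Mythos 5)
Your selection of $s$ via edge-destruction cannot produce the threshold your first-moment step needs, and this is a fatal gap rather than a calibration issue. To get $\mathbb{E}[\#C_s(G[Q])]\geq 2(\varepsilon^6 q/c_1)^s$ from the hypergeometric factor $(q/n)^s$ you must take $T_s$ of order $(\varepsilon^6 n/c_1)^s$, which already for $s=3$ is $\Theta(\varepsilon^{18}n^3)$. This exceeds the removal budget $\varepsilon n^2/4$ once $n\gg \varepsilon^{-17}$, and the lemma must hold for arbitrarily large $n$ (only $q$ is bounded in terms of $\varepsilon$). What the destruction argument genuinely yields is that some odd $s\leq c_1\varepsilon^{-1/2}$ satisfies $\#C_s(G)=\Omega(\varepsilon^{3/2}n^2)$ (the budget divided by the number of lengths); after scaling by $(q/n)^s$ the expected count is of order $\varepsilon^{3/2}q^s/n^{s-2}$, which tends to $0$ as $n\to\infty$ for every $s\geq 3$. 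The mismatch is in the power of $n$ ($n^s$ versus $n^2$), so no choice of exponents of $\varepsilon$ can repair it. A secondary problem is the concluding Chebyshev step: subgraph counts under vertex sampling do not concentrate merely because their expectation is large --- if the copies of $C_s$ all pass through a bounded set of vertices, the variance is dominated by overlapping pairs and the count is $0$ with probability close to $1$.

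The paper's proof avoids counting copies in $G$ altogether. It chooses $s$ by pigeonhole: a random set $P$ of $c\varepsilon^{-5}$ vertices induces a graph that is $\frac{\varepsilon}{2}$-far from bipartite with probability at least $\frac{2}{3}$ (Lemma \ref{thm:bipartite_distance}), hence contains some odd cycle of length at most $c(\varepsilon/2)^{-1/2}$ by Theorem \ref{thm:Komlos}, so some particular odd length $s$ occurs with probability at least $\varepsilon^{1/2}/c'$. Taking the union $R$ of $d=4c'\varepsilon^{-1/2}$ independent such samples amplifies this to: at least a $\frac{5}{6}$-fraction of all $r$-sets, $r=4cc'\varepsilon^{-11/2}$, span a $C_s$ (this is where the hypothesis $n\geq q\geq c_1\varepsilon^{-11}$ enters, to make the samples pairwise disjoint with probability at least $\frac{1}{2}$). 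Finally, a double count over the $r$-subsets of a typical $q$-set $Q$ gives at least $\binom{q}{s}/(2\binom{r}{s})\geq \frac{1}{2}(q/(er))^s\geq(\varepsilon^6 q/c_1)^s$ copies of $C_s$ in $G[Q]$. The base $\varepsilon^6 q/c_1$ thus arises as $q/(er)$ with $r=\Theta(\varepsilon^{-11/2})$, not from any moment computation. If you want to salvage your outline, you should replace the first-moment and Chebyshev machinery with this ``most $r$-subsets contain a copy'' amplification followed by the double count.
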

\begin{proof}
	By Theorem \ref{thm:bipartite_distance}, a uniformly chosen
	$P \in \binom{V(G)}{c\varepsilon^{-5}}$ induces a graph which is
	$\frac{\varepsilon}{2}$-far from bipartiteness with probability at least $2/3$. By Theorem \ref{thm:Komlos}, such an induced subgraph contains an odd cycle of length at most
	$c (\varepsilon/2)^{-1/2}$.
	Thus, there is
	$3 \leq s \leq c (\varepsilon/2)^{-1/2}$
	such that a random
	$P$ as above
	contains an $s$-cycle with probability at least
	$\varepsilon^{1/2}/c'$.
	Set
	$d = 4c'\varepsilon^{-1/2}$
	and let $P_1,\dots,P_d \in \binom{V(G)}{c\varepsilon^{-5}}$ be chosen uniformly at random and independently. Setting
	$R = P_1 \cup \dots \cup P_d$, we see that $G[R]$ contains an $s$-cycle with probability at least
	$1 -
	\left( 1 - \varepsilon^{1/2}/c' \right)^{4c'\varepsilon^{-1/2}}
	\geq 1 - e^{-4} \geq 11/12$.
	Moreover, the probability that there are $1 \leq i < j \leq d$ for which $P_i \cap P_j \neq \emptyset$ is at most
	$\binom{d}{2} \cdot n \cdot ( c\varepsilon^{-5}/n )^{2} \leq
	c'' \varepsilon^{-11}/n \leq \frac{1}{2}$,
	where in the last inequality we used the assumption that
	$n \geq c_1 \varepsilon^{-11}$. Thus, setting
	$r = d \cdot c\varepsilon^{-5} = 4cc' \varepsilon^{-11/2}$,
	we see that
	$\mathbb{P}[|R| = r] \geq \frac{1}{2}$.
	Since $G[R]$ contains an $s$-cycle with probability at least $\frac{11}{12}$, we infer that at least a $\frac{5}{6}$-fraction of all sets
	$R' \in \binom{V(G)}{r}$ are such that $G[R']$ contains an $s$-cycle.
	Let $\mathcal{R}$ be the set of all
	$R' \in \binom{V(G)}{r}$ having this property, and note that
	$|\mathcal{R}| \geq \frac{5}{6} \binom{n}{r}$.
	
	Fix any $q \geq r$.
	For $Q \in \binom{V(G)}{q}$, define the random variable
	$Z(Q) = | \binom{Q}{r} \cap \mathcal{R} |$ (namely, $Z(Q)$ is the number of sets in $\mathcal{R}$ which are contained in $Q$), and let
	$
	\mathcal{Q} =
	\left\{Q \in \binom{V(G)}{q} : Z(Q) \geq \frac{1}{2}\binom{q}{r} \right\}.
	$
	By linearity of expectation, we have
	$\mathbb{E}[Z] = |\mathcal{R}| \cdot \binom{q}{r}/\binom{n}{r} \geq \frac{5}{6}\binom{q}{r}$.
	Since
	$0 \leq Z  \leq \binom{q}{r}$,
	it is now easy to deduce (by averaging) that
	$\mathbb{P}[Z \geq \frac{1}{2}\binom{q}{r}] \geq \frac{2}{3}$,
	implying that
	$|\mathcal{Q}| \geq \frac{2}{3} \binom{n}{q}$.
	
	Now let $Q \in \mathcal{Q}$. By the definition of $\Q$, there are at least $\frac{1}{2}\binom{q}{r}$
	$r$-sets $R \subseteq Q$ such that $G[R]$ contains a copy of $C_{s}$.
	On the other hand, a copy of $C_{s}$ in $G[Q]$ is contained in exactly $\binom{q-s}{r-s}$ such $r$-sets. Thus, $G[Q]$ contains at least
	\begin{equation*}
	\frac{\binom{q}{r}}{2\binom{q-s}{r-s}} =
	\frac{\binom{q}{s}}{2\binom{r}{s}} \geq
	\frac{1}{2}\left( \frac{q}{er} \right)^{s} \geq
	\big( \varepsilon^6 q/c_1 \big)^{s} \; .
	\end{equation*}
	copies of $C_{s}$. This completes the proof.
\end{proof}
Lemma \ref{lem:far_from_bipartite_many_L_cycles}, stated below, is the main lemma in this section. Its proof uses Lemma \ref{cor:ex_odd_cycles}, Lemma \ref{cor:k-cycle_sample} and the following lemma from \cite{AS_separation}.
\begin{lemma}[\cite{AS_separation}]\label{lem:blowup_distance}
Let $K$ be a $k$-vertex graph, let $F$ be an $f$-vertex graph which has a homomorphism into $K$ and let $G$ be the $\frac{n}{k}$-blowup of $K$ where $n \geq n_0(k,f)$. Then $G$ is $\frac{1}{2k^2}$-far from being $F$-free.
\end{lemma}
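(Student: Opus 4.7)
The plan is to lower-bound the edit distance from $G$ to any $F$-free graph via a double-counting argument anchored on the given homomorphism $\phi\colon V(F)\to V(K)$. First I would count labelled $\phi$-respecting copies of $F$: injections $\psi\colon V(F)\hookrightarrow V(G)$ with $\psi(v)\in V_{\phi(v)}$ for every $v$, where $V_1,\dots,V_k$ are the blowup parts of size $n/k$. Since $\phi$ is a homomorphism and $G$ contains the complete bipartite graph between $V_i$ and $V_j$ whenever $\{i,j\}\in E(K)$, every such $\psi$ is automatically a copy of $F$. For $n\ge n_0(k,f)$, the fraction of non-injective placements is negligible, so this count is at least $(1-o(1))(n/k)^f$.

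Next I would bound the multiplicity of any fixed edge $e$ of $G$ among these copies. If $e=\{x,y\}$ with $x\in V_i,\ y\in V_j$, then $e$ can only appear as the image of some $F$-edge $\{u,v\}$ with $\{\phi(u),\phi(v)\}=\{i,j\}$, and once $\psi(u)=x,\ \psi(v)=y$ are fixed the remaining $f-2$ vertices admit at most $(n/k)^{f-2}$ placements. Hence $e$ participates in at most $|E(F)|\,(n/k)^{f-2}$ $\phi$-respecting copies. If $S\subseteq E(G)$ is such that $G\setminus S$ is $F$-free, then each of our $(1-o(1))(n/k)^f$ copies loses some edge to $S$, and double counting yields
\[
|S|\;\ge\;(1-o(1))\,(n/k)^2\big/|E(F)|\;=\;\Omega_{k,f}(n^2),
\]
which already gives $\Omega_{k,f}(1)$-farness.

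The main obstacle is upgrading this $\Omega_{k,f}(1)$-farness to the clean $f$-independent constant $\tfrac{1}{2k^2}$. Because $n_0$ may depend on $f$, all factors polynomial in $f$ can be absorbed into the slack once $n$ is taken large enough. A natural way to carry this out is to couple the double-count above with the Erd\H{o}s--Stone theorem applied to the non-isolated part of $G$: any $F$-free subgraph of $G$ has at most $\bigl(1-\tfrac{1}{\chi(F)-1}+o(1)\bigr)\binom{|V'|}{2}$ edges, where $V'$ is the union of the non-isolated blowup parts, whereas $|E(G)|=|E(K)|(n/k)^2$ and the existence of $\phi$ forces $\chi(K)\ge\chi(F)$. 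For $\chi(F)=2$ (bipartite $F$) the Erd\H{o}s--Stone bound directly gives $\ex(n,F)=o(n^2)$ and the desired lower bound follows immediately. For $\chi(F)\ge 3$, one refines the double-count by using $d(e)\le e_{ij(e)}(n/k)^{f-2}$ (rather than the uniform bound $|E(F)|(n/k)^{f-2}$), reducing the task to showing $\sum_{\{i,j\}\in E(K)} e_{ij}\,s_{ij}\ge(1-o(1))(n/k)^2$ whenever $|S|<n^2/(2k^2)$. This last inequality is then established by a case analysis on the chromatic structure of $K$, handling in particular the delicate situations (e.g., Mycielski-like $K$) where $K$ has high chromatic number but relatively few edges, by applying bipartite Tur\'an-type bounds within each surviving pair $(V_i,V_j)$ and summing. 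I expect this last constant-calibration to be the hardest step; the $\Omega_{k,f}(1)$-farness itself drops out of the clean counting argument.
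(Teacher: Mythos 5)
There is a genuine gap, and it sits exactly where you predicted: the calibration of the constant to $\tfrac{1}{2k^2}$. Your first double-count is fine, but it only yields that one must delete at least $\Omega\bigl((n/k)^2/|E(F)|\bigr)$ edges, i.e.\ $\tfrac{c}{|E(F)|k^2}$-farness, and the whole point of the lemma (note that the paper does not prove it here -- it is imported from \cite{AS_separation}) is that the constant is independent of $F$, since $f$ may be enormous compared to $k$ in the applications of Section 4. Your proposed repair via Erd\H{o}s--Stone does not go through: already for $K=F=C_5$ the blow-up $G$ has $|E(G)|=\tfrac{n^2}{5}$ edges while the Erd\H{o}s--Stone bound for $C_5$-free graphs is $\bigl(\tfrac12+o(1)\bigr)\binom{n}{2}\approx \tfrac{n^2}{4}>|E(G)|$, so it gives no lower bound on the number of edges one must delete; and the inequality you reduce to, $\sum_{\{i,j\}\in E(K)} e_{ij}s_{ij}\ge (1-o(1))(n/k)^2$ whenever $|S|<n^2/(2k^2)$, is false as stated (an adversary concentrates $S$ on pairs with $e_{ij}=0$). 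The ``case analysis on the chromatic structure of $K$'' is precisely the unproven hard step, and I do not see how to complete it along these lines.

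The missing idea is to not embed $F$ directly but to first find a large complete blow-up of $K$ inside $G\setminus S$. Suppose $|S|<\tfrac{n^2}{2k^2}=\tfrac12 m^2$ with $m=n/k$. Pick a uniformly random transversal $(v_1,\dots,v_k)\in V_1\times\dots\times V_k$; since every edge of $G$ lies between a pair $(V_i,V_j)$ with $\{i,j\}\in E(K)$, a union bound over $E(K)$ gives that the transversal avoids $S$ entirely, and hence induces a copy of $K$ in $G\setminus S$, with probability at least $1-|S|/m^2>\tfrac12$. Thus $G\setminus S$ contains more than $\tfrac12 m^k$ transversal copies of $K$. Viewing these as the edges of a $k$-partite $k$-uniform hypergraph and applying the hypergraph K\H{o}v\'ari--S\'os--Tur\'an (Erd\H{o}s supersaturation) theorem, for $n\ge n_0(k,f)$ one finds sets $U_i\subseteq V_i$ with $|U_i|=f$ such that every transversal of $U_1\times\dots\times U_k$ induces a copy of $K$; equivalently, $G\setminus S$ contains the $f$-blow-up of $K$, which contains $F$ because $F$ is homomorphic to $K$. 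All dependence on $f$ is absorbed into $n_0(k,f)$, which is exactly the quantifier structure the lemma allows.
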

\begin{lemma}\label{lem:far_from_bipartite_many_L_cycles}
There is a constant $c_2 \geq c_1$ (where $c_1$ is from Lemma \ref{cor:k-cycle_sample}) such that the following holds. Let $(\ell_i)_{i \geq 1}$ be an infinite increasing sequence of odd integers with $\ell_1 \geq 3$, and set $L = \{\ell_i : i \geq 1\}$. Then the following holds.
\begin{enumerate}
\item Let $\varepsilon \in (0,1)$ be small enough so that $c_1 \varepsilon^{-1/2} \geq \ell_1$. Let $\ell_i$ be the maximal element of $L$ not larger than $c_1 \varepsilon^{-1/2}$, let
$n \geq q \geq c_2 \varepsilon^{-13} \cdot \ell_1^{2} \cdot \ell_{i+1}$, and let $G$ be an $n$-vertex graph which is $\varepsilon$-far from being bipartite. Then with probability at least $\frac{2}{3}$, a random induced subgraph of $G$ of order $q$ is not $L$-free. Thus,
$w_{L}(\varepsilon) \leq c_2 \varepsilon^{-13} \cdot \ell_1^{2} \cdot \ell_{i+1}$.
\item For every $i \geq 1$ we have $w_{L}( \frac{1}{2(\ell_i + 2)^2} ) \geq \ell_{i+1}$.
\end{enumerate}
\end{lemma}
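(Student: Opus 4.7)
For Part 1, the plan is to couple Lemma~\ref{cor:k-cycle_sample} with Lemma~\ref{cor:ex_odd_cycles}. Because $G$ is $\varepsilon$-far from bipartite and $q \geq c_1\varepsilon^{-11}$ (which follows from the hypothesis on $q$ once $c_2 \geq c_1$), Lemma~\ref{cor:k-cycle_sample} produces an odd $s$ with $3 \leq s \leq c_1\varepsilon^{-1/2}$ such that a uniform $Q \in \binom{V(G)}{q}$ satisfies $\#C_s(G[Q]) \geq (\varepsilon^6 q/c_1)^s$ with probability at least $2/3$. I fix such a $Q$ and argue $G[Q]$ cannot be $L$-free. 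If $s \in L$ this is immediate. Otherwise let $\ell_m$ be the smallest element of $L$ with $\ell_m > s$; since $s \leq c_1\varepsilon^{-1/2} < \ell_{i+1}$, we have $\ell_m \leq \ell_{i+1}$. Were $G[Q]$ $L$-free it would in particular be $C_{\ell_m}$-free, so Lemma~\ref{cor:ex_odd_cycles} would give $\#C_s(G[Q]) \leq \ex(q, C_s, C_{\ell_m})$; the plan is to check that the choice $q \geq c_2\varepsilon^{-13}\ell_1^2\ell_{i+1}$ makes this bound strictly smaller than $(\varepsilon^6 q/c_1)^s$, producing a contradiction.

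Verifying the inequality splits into two subcases. When $s = 3$ we must have $\ell_1 \geq 5$ (else $3 \in L$), so $\ell_m = \ell_1$ and the first clause of Lemma~\ref{cor:ex_odd_cycles} gives $\ex(q, C_3, C_{\ell_1}) \leq O(\ell_1^2 \cdot q^{1+2/(\ell_1-1)})$. Rearranging, the required inequality takes the form $q^{2 - 2/(\ell_1 - 1)} \geq \Omega(\ell_1^2/\varepsilon^{18})$, and since the exponent on $q$ lies in $[3/2, 2)$ for $\ell_1 \geq 5$, a choice $q \geq \Omega(\ell_1^2/\varepsilon^{12})$ suffices; this is absorbed by the hypothesis, and is the origin of the $\ell_1^2$ factor in the final bound. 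When $s \geq 5$ the second clause of Lemma~\ref{cor:ex_odd_cycles} gives $\ex(q, C_s, C_{\ell_m}) \leq O(s^{s-1}\ell_{i+1}^{(s+1)/2} q^{(s-1)/2})$; taking the $(s+1)/2$-th root, the required condition becomes $q \geq \Omega\bigl((c_1^s s^{s-1})^{2/(s+1)} \cdot \ell_{i+1} \cdot \varepsilon^{-12s/(s+1)}\bigr)$. Exploiting $s \leq c_1\varepsilon^{-1/2}$ bounds each factor: $(c_1^s)^{2/(s+1)} \leq c_1^2$, $s^{2(s-1)/(s+1)} \leq s^2 \leq c_1^2/\varepsilon$, and $\varepsilon^{-12s/(s+1)} \leq \varepsilon^{-12}$, collapsing the threshold to $q \geq \Omega(\ell_{i+1}/\varepsilon^{13})$, again absorbed.

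For Part 2 the plan is a direct blowup construction. Let $G$ be the $(n/(\ell_i+2))$-blowup of $C_{\ell_i+2}$, noting that $\ell_i + 2$ is odd since $\ell_i$ is. Because $\ell_{i+1}$ and $\ell_i+2$ are both odd with $\ell_{i+1} \geq \ell_i + 2$, there is a graph homomorphism $C_{\ell_{i+1}} \to C_{\ell_i+2}$: walk once around $C_{\ell_i+2}$ and append $(\ell_{i+1} - \ell_i - 2)/2$ back-and-forth steps, an integer since $\ell_{i+1} - \ell_i - 2$ is even. Lemma~\ref{lem:blowup_distance} then yields that $G$ is $\frac{1}{2(\ell_i+2)^2}$-far from being $C_{\ell_{i+1}}$-free, and hence from being $L$-free. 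Conversely, the odd girth of $G$ equals $\ell_i + 2$, so every odd cycle in an induced subgraph $G[S]$ with $|S| \leq \ell_{i+1} - 1$ has length in the interval $[\ell_i + 2, \ell_{i+1} - 1]$, which misses $L$ entirely since $\ell_i$ and $\ell_{i+1}$ are consecutive in $L$. Thus $G[S]$ is $L$-free with probability $1$, giving $w_L\!\left(\frac{1}{2(\ell_i+2)^2}\right) \geq \ell_{i+1}$.

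The main obstacle is the constant-chasing in Part~1: because $s$ from Lemma~\ref{cor:k-cycle_sample} may be as large as $c_1\varepsilon^{-1/2}$, the factors $c_1^s$ and $s^{s-1}$ appearing in Lemma~\ref{cor:ex_odd_cycles} are a priori super-polynomial in $1/\varepsilon$, and one must carefully exploit the exponent $2/(s+1)$ coming from inverting the $q^{(s-1)/2}$-dependence to bring them back down to polynomial size. Once this taming is in place the rest of Part~1 is routine arithmetic in choosing $c_2$ large enough compared to $c$ and $c_1$, and Part~2 is immediate from the blowup together with Lemma~\ref{lem:blowup_distance}.
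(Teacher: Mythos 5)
Your proposal is correct and follows essentially the same route as the paper: Lemma \ref{cor:k-cycle_sample} combined with Lemma \ref{cor:ex_odd_cycles} for Part 1 (with the same case split $s=3$ versus $s \geq 5$ and the same taming of $c_1^s$ and $s^{s-1}$ via the exponent $2/(s+1)$), and the blowup of $C_{\ell_i+2}$ together with Lemma \ref{lem:blowup_distance} for Part 2. The only cosmetic difference is that for $s \geq 5$ you target the smallest $\ell_m \in L$ exceeding $s$, whereas the paper shows directly that $G[Q]$ contains a copy of $C_{\ell_{i+1}}$; both work since $\ell_m \leq \ell_{i+1}$.
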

\begin{proof}
	We start by proving the first assertion of Item 1. Let $G$ be an $n$-vertex graph which is $\varepsilon$-far from bipartiteness.
	By Lemma \ref{cor:k-cycle_sample}, there is an odd
	$3 \leq s \leq c_1 \varepsilon^{-1/2}$ such that for a randomly chosen $Q \in \binom{V(G)}{q}$, the graph $G[Q]$ contains at least
	$( \varepsilon^{6} q/c_1 )^{s}$ copies of $C_{s}$ with probability at least $\frac{2}{3}$. We claim that if $G[Q]$ has this property then $G[Q]$ is not $L$-free. This will show that a random induced subgraph of $G$ of order $q$ is not $L$-free with probability at least $\frac{2}{3}$. This will also prove the upper bound on $w_L(\varepsilon)$ stated in Item 1, since every graph which is $\varepsilon$-far from being $L$-free is also $\varepsilon$-far from bipartiteness (as $L$ contains only odd integers).
	
	Assume first that $s=3$. If $\ell_1 = 3$ then $G[Q]$ is clearly not $L$-free, as it contains at least one triangle. So we may assume that $\ell_1 = 2\ell + 1 > 3$. It is easy to see that for $c_2$ large enough, our choice of $q$ guarantees that
	$$(\varepsilon^{6} q/c_1)^3 >
	c \ell_1^2 q^{3/2} >
	c \ell^2 q^{1+1/\ell} \geq
	\ex(q,C_3,C_{2\ell+1})\;,$$ where in the last inequality we use Lemma \ref{cor:ex_odd_cycles}. This means that $G[Q]$ contains more triangles than $\ex(q,C_3,C_{2\ell + 1})$. So $G[Q]$ contains a cycle of length $\ell_1 = 2\ell+1$ and hence is not $L$-free.
	
	Assume from now on that $s > 3$. Observe that for a large enough $c_2$ we have
	$$
	( \varepsilon^{6} q/c_1 )^{s} >
	c \cdot (c_1 \varepsilon^{-1/2})^s \cdot \ell_{i+1}^{s/2} \cdot q^{s/2} \geq
	c s^s \ell_{i+1}^{s/2} q^{s/2} \geq
	c s^s \ell_{i+1}^{(s+1)/2} q^{(s-1)/2} \geq
	\ex(q,C_s,C_{\ell_{i+1}})\;,
	$$
	where in the first and third inequalities we use our choice of $q$, in the second inequality we use $s \leq c_1 \varepsilon^{-1/2}$ and in the last inequality we use Lemma \ref{cor:ex_odd_cycles} with $2k+1 = s$ and $2\ell+1 = \ell_{i+1}$, noting that
	$s < \ell_{i+1}$ by our choice of $\ell_{i}$ and by $s \leq c_1 \varepsilon^{-1/2}$.
	As $G[Q]$ contains more $s$-cycles than $\ex(q,C_s,C_{\ell_{i+1}})$, it must contain a cycle of length $\ell_{i+1}$. Thus, $G[Q]$ is not $L$-free.
	
	We now prove the second Item. Fixing $i \geq 1$, let $n$ be large enough so that Lemma \ref{lem:blowup_distance} is applicable to $k = \ell_i + 2$ and $f = \ell_{i+1}$, and let $G$ be the $\frac{n}{\ell_i + 2}$-blowup of $C_{\ell_i + 2}$. Note that $C_{\ell_{i+1}}$ has a homomorphism into $C_{\ell_i + 2}$, as
	$\ell_{i+1} \geq \ell_i + 2$. Thus, by applying Lemma \ref{lem:blowup_distance} with
	$K = C_{\ell_i + 2}$ and $F = C_{\ell_{i+1}}$, we conclude that $G$ is
	$\frac{1}{2(\ell_i + 2)^2}$-far from being $C_{\ell_{i+1}}$-free and hence also $\frac{1}{2(\ell_i + 2)^2}$-far from being $L$-free. On the other hand, there is no homomorphism from $C_{k}$ to
	$C_{\ell_i + 2}$ for any odd $k \leq \ell_i$. Thus, every subgraph of $G$ on less than $\ell_{i+1}$ vertices is $L$-free. Item 2 of the lemma follows.
\end{proof}

\noindent
The proofs of Theorems \ref{thm:hierarchy} and \ref{thm:cycle_sequence_charac} now follow quite easily from the above lemma.

\begin{proof}[Proof of Theorem \ref{thm:hierarchy}]
Set $\ell_1 = 3$ and
$\ell_{i+1} = 2f( \frac{1}{2(\ell_i + 2)^2} ) + 1$. Then $\ell_i$ is odd for every $i \geq 1$, and $(\ell_i)_{i \geq 1}$ is increasing as
$f$ satisfies $f(x) \geq 1/x$.
Setting $L = \{\ell_i : i \geq 1\}$, we will show that the property of $L$-freeness satisfies the assertion of the theorem. More precisely, we will show that there is an absolute constant $\varepsilon_0 > 0$ such that $w_{L}(\varepsilon) \leq \varepsilon^{-14}f(\varepsilon/c)$ for every $\varepsilon < \varepsilon_0$, and that
$w_{L}(\varepsilon) \geq f(\varepsilon)$ for an infinite sequence of values of $\varepsilon$ which tends to $0$.
Let $\varepsilon \in (0,1)$ be small enough so that
$c_1 \varepsilon^{-1/2} \geq 3 = \ell_1$, and let $\ell_i$ be the maximal element of $L$ not larger than $c_1 \varepsilon^{-1/2}$.
Item 1 of Lemma \ref{lem:far_from_bipartite_many_L_cycles} implies that
$$
w_{L}(\varepsilon) \leq
c_2 \varepsilon^{-13} \cdot \ell_1^2 \cdot \ell_{i+1} =
9c_2 \varepsilon^{-13} \cdot \ell_{i+1} \leq
27c_2 \varepsilon^{-13} \cdot f( \frac{1}{2(\ell_i + 2)^2} ) \leq
\varepsilon^{-14} \cdot f(\varepsilon/c)\;,
$$
where in the last inequality we used that
$\ell_i \leq c_1 \varepsilon^{-1/2}$, that $f$ is decreasing, and that $1/\varepsilon > 27c_2$ (which can be guaranteed by appropriately choosing $\varepsilon_0$).
The second part of Lemma \ref{lem:far_from_bipartite_many_L_cycles} implies that for every
$i \geq 1$,
$w_{L}( \frac{1}{2(\ell_i + 2)^2} ) \geq \ell_{i+1} > f( \frac{1}{2(\ell_i + 2)^2} )$. So there is a decreasing sequence $(\varepsilon_i)_{i \geq 1}$ with $\varepsilon_i \rightarrow 0$ (namely $\varepsilon_i = \frac{1}{2(\ell_i + 2)^2}$) such that
$w_L(\varepsilon_i) \geq f(\varepsilon_i)$.
The theorem follows.
\end{proof}

\begin{proof}[Proof of Theorem \ref{thm:cycle_sequence_charac}]
	The first part of Lemma \ref{lem:far_from_bipartite_many_L_cycles} implies that for a sufficiently small $\varepsilon$ we have
	$w_L(\varepsilon) \leq \poly(1/\varepsilon) \cdot \ell_{i+1}$, where $\ell_i$ is the maximal element of $L$ not larger than $c_1 \varepsilon^{-1/2}$. Thus, if $\ell_{i+1} \leq \ell_i^d$ for some $d = d(L)$ and every sufficiently large $i$, then
	$w_L(\varepsilon) \leq \poly(1/\varepsilon)$ for every sufficiently small $\varepsilon$. On the other hand, the second part of Lemma \ref{lem:far_from_bipartite_many_L_cycles} implies that unless $\ell_{i+1} \leq \ell_i^d$ for some $d = d(L)$ and for every large enough $i$, the function $w_L(\varepsilon)$ is super-polynomial in $1/\varepsilon$ for infinitely many values of $\varepsilon$. We conclude that $w_{L}(\varepsilon) = \poly(1/\varepsilon)$ if and only if $\ell_{i+1} \leq \ell_i^d$ for every large enough $i$, which is equivalent to having
	$\limsup_{j \longrightarrow \infty}{\frac{\log \ell_{j+1}}{\log \ell_j}} \leq d < \infty$.
\end{proof}

\begin{lemma}\label{lem:o(1)-close to bipartite}
Let $(\ell_i)_{i \geq 1}$ be an infinite increasing sequence of odd integers with $\ell_1 \geq 3$, and set $L = \{\ell_i : i \geq 1\}$. Then every $L$-free graph is $o(1)$-close to bipartiteness.
\end{lemma}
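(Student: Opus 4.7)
The plan is to deduce this lemma as an immediate corollary of Item 1 of Lemma \ref{lem:far_from_bipartite_many_L_cycles}, leveraging the hereditary nature of $L$-freeness. I would interpret the statement that every $L$-free graph is $o(1)$-close to bipartite as follows: for every fixed $\varepsilon > 0$, every $L$-free graph on sufficiently many vertices is $\varepsilon$-close to bipartite. So the task reduces to producing, for each small enough $\varepsilon$, a threshold $n_0(\varepsilon)$ above which no $L$-free graph is $\varepsilon$-far from bipartite.

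For the main step, I would fix $\varepsilon > 0$ small enough that $c_1 \varepsilon^{-1/2} \geq \ell_1$, let $\ell_i$ be the largest element of $L$ satisfying $\ell_i \leq c_1 \varepsilon^{-1/2}$, and set $q := c_2 \varepsilon^{-13} \ell_1^2 \ell_{i+1}$. Suppose for contradiction that some $n$-vertex graph $G$ with $n \geq q$ is both $L$-free and $\varepsilon$-far from bipartite. Item 1 of Lemma \ref{lem:far_from_bipartite_many_L_cycles} applied directly to $G$ then asserts that a uniformly random induced subgraph of $G$ of order $q$ fails to be $L$-free with probability at least $2/3$; in particular there exists at least one induced subgraph $G[Q]$ that is not $L$-free. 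But $L$-freeness is preserved under taking (induced) subgraphs, so this contradicts the $L$-freeness of $G$. Hence taking $n_0(\varepsilon) := q(\varepsilon)$ works, and since $n_0(\varepsilon)$ is finite for every fixed $\varepsilon$, the $o(1)$-closeness follows.

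I do not anticipate a genuine obstacle here, since all of the combinatorial machinery---the Koml\'os bound, the \cite{ADKK} sampling theorem, and our Tur\'an-type estimates via Lemma \ref{cor:ex_odd_cycles}---has already been absorbed into Lemma \ref{lem:far_from_bipartite_many_L_cycles}. The only points to double-check are that $q(\varepsilon)$ is indeed finite for each fixed $\varepsilon$ (immediate, as $q$ depends polynomially on $1/\varepsilon$ and on fixed elements of $L$), and that the restriction to $\varepsilon$ small enough that $c_1 \varepsilon^{-1/2} \geq \ell_1$ is harmless, which it is since the $o(1)$-conclusion only concerns $\varepsilon \to 0$.
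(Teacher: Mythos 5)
Your proposal is correct and is essentially the paper's own proof: the paper likewise fixes $\varepsilon$ with $c_1\varepsilon^{-1/2}\geq \ell_1$, takes $\ell_i$ maximal with $\ell_i\leq c_1\varepsilon^{-1/2}$, and invokes the contrapositive of Item 1 of Lemma \ref{lem:far_from_bipartite_many_L_cycles} together with the hereditary nature of $L$-freeness to conclude that every $L$-free graph on at least $c_2\varepsilon^{-13}\ell_1^2\ell_{i+1}$ vertices is $\varepsilon$-close to bipartiteness. Your phrasing as a proof by contradiction rather than a direct contrapositive is an immaterial difference.
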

\begin{proof}
Our goal is to show that for every sufficiently small $\varepsilon$ there is $n_0(\varepsilon)$ such that every $L$-free graph on $n \geq n_0(\varepsilon)$ vertices is $\varepsilon$-close to being bipartite. So fix $\varepsilon > 0$ small enough to satisfy
$c_1 \varepsilon^{-1/2} \geq \ell_1$, and let
$\ell_i$ be the maximal element of $L$ not larger than $c_1 \varepsilon^{-1/2}$. By (the contrapositive of) Item 1 in Lemma \ref{lem:far_from_bipartite_many_L_cycles}, every $n$-vertex $L$-free graph is $\varepsilon$-close to bipartiteness, provided that $n$ is large enough to satisfy
$n \geq c_2 \varepsilon^{-13} \cdot \ell_1^2 \cdot \ell_{i+1}$.
This completes the proof.
\end{proof}

The quantitative version of Lemma \ref{lem:o(1)-close to bipartite} states that $L$-free $n$-vertex graphs are roughly $\Theta(\ell_i^{-2})$-close to bipartiteness, where $i$ is the maximal integer satisfying $n \geq \ell_{i+1}$ (here we assume that the sequence $(\ell_i)_{i \geq 1}$ grows fast enough).
Let us explain why this dependence on the sequence $(\ell_i)_{i \geq 1}$ is unavoidable.
For $n = \ell_{i+1}$, let $G$ be the
$\frac{n-1}{\ell_i + 2}$-blowup of
$C_{\ell_i + 2}$, plus an isolated vertex. Then $G$ is $L$-free; it contains neither an odd cycle of length at most $\ell_i$ (as such a cycle is not homomorphic to $C_{\ell_i + 2}$), nor an odd cycle of length at least $\ell_{i+1}$ (as $\ell_{i+1} > n-1$ and $G$ has an isolated vertex). Nonetheless, it is easy to see that $G$ is $\Theta(\ell_i^{-2})$-far from bipartiteness. This shows that the $o(1)$-term in Lemma \ref{lem:o(1)-close to bipartite} may tend to zero arbitrarily slowly, depending on the family $L$. For example, if $\ell_i = \mbox{tower}(i)$ then
$\ell_i = \log_2(\ell_{i+1})$, so every $L$-free $n$-vertex graph is roughly
$\Theta(\frac{1}{\log^2 n})$-close to bipartiteness, and this is tight.

\begin{proof}[Proof of Theorem \ref{thm:two_sided_test}]
By (the proof of) Theorem \ref{thm:hierarchy}, there is an increasing sequence of odd integers
$L = \{\ell_1 = 3,\ell_2,\ell_3,\dots\}$ such that $w_L(\varepsilon) \geq f(\varepsilon)$. Thus, it remains to present a $2$-sided tester for $L$-freeness which has query complexity $\poly(1/\varepsilon)$.
Our $\varepsilon$-tester works as follows: it samples a random induced subgraph of the input of order $q = q(\varepsilon) = c \varepsilon^{-5}$
and accepts if and only if this subgraph is $\frac{\varepsilon}{2}$-close to bipartiteness. Let us prove that this algorithm is indeed a valid $\varepsilon$-tester for graphs of order
$n \geq n_0(\varepsilon)$, where $n_0(\varepsilon)$ will be (implicitly) chosen later. Let $G$ be an $n$-vertex input graph. If $G$ is $\varepsilon$-far from $L$-freeness then it is also $\varepsilon$-far from bipartiteness, so Lemma \ref{thm:bipartite_distance} implies that with probability at least $\frac{2}{3}$, $G$ is rejected. Assume now that $G$ is $L$-free. By Lemma \ref{lem:o(1)-close to bipartite}, if $n$ is large enough then $G$ is $\frac{\varepsilon}{12}$-close to bipartiteness.
Hence, there is a set $E \subseteq E(G)$ of size
$|E| \leq \frac{\varepsilon}{12}n^2$ such that $G \setminus E$ (the graph obtained from $G$ by deleting the edges in $E$) is bipartite. Let
$Q = \{x_1,\dots,x_q\}$ denote the vertex-set sampled by the tester. The expected number of pairs $1 \leq i < j \leq q$ for which $\{x_i,x_j\} \in E$ is
$\binom{q}{2} \cdot \frac{2|E|}{n(n-1)} \leq \frac{\varepsilon}{6}q^2$. By Markov's inequality, we have
$|E(G[Q]) \cap E| \leq \frac{\varepsilon}{2}q^2$ with probability at least $\frac{2}{3}$. Thus, with probability at least $\frac{2}{3}$, $G[Q]$ is $\frac{\varepsilon}{2}$-close to bipartiteness (as deleting the edges in $E(G[Q]) \cap E$ makes $G[Q]$ bipartite), and $G$ is accepted by the tester.
\end{proof}

\section{Proof of Lemma \ref{lem:P3 even_cycle main}}\label{sec:key}
We will need an upper bound on Zarankiewicz numbers for even cycles, proved by Naor and Verstra\"{e}te \cite{NV}. For integers $n,m \geq 1$ and $\ell \geq 2$, let $z(n,m,C_{2\ell})$ denote the maximal number of edges in a $C_{2\ell}$-free bipartite graph with sides of size $n$ and $m$.
\begin{theorem}[\cite{NV}]\label{thm:Zarankiewicz_even_cycles}
	For $m \leq n$ it holds that
	\begin{equation*}
	z(n,m,C_{2\ell}) \leq
	\begin{cases}
	(2\ell - 3)\left( (nm)^{1/2 + 1/(2\ell)} + 2n \right) &
	\ell \text{ is odd}, \\
	(2\ell - 3)\left( n^{1/2}m^{1/2 + 1/\ell} + 2n \right) &
	\ell \text{ is even}.
	\end{cases}
	\end{equation*}
\end{theorem}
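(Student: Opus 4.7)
The plan is to prove this bound via the Bondy--Simonovits expansion method, adapted to the bipartite Zarankiewicz setting with asymmetric sides. Fix $\ell \geq 2$ and let $G$ be a $C_{2\ell}$-free bipartite graph with parts $A,B$ of sizes $n\geq m$ and $e=e(G)$ edges. First I would pre-process by iteratively deleting $A$-vertices of degree less than $e/(4n)$ and $B$-vertices of degree less than $e/(4m)$. This removes only a constant fraction of edges and leaves a subgraph with uniform minimum-degree bounds $\delta_A \gtrsim e/n$ and $\delta_B \gtrsim e/m$; the additive $2n$ slack in the theorem absorbs exactly this cleanup cost, so henceforth I assume these minimum-degree guarantees.

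The core of the argument is a BFS expansion from a root $v$ chosen so that the $\ell$-th BFS layer $N_\ell$ lands in the smaller side $B$: take $v\in A$ when $\ell$ is odd and $v\in B$ when $\ell$ is even. The central structural input from $C_{2\ell}$-freeness is an expansion lemma: at each step $i\le \ell-1$, the layer $|N_{i+1}|$ grows essentially by the minimum-degree factor on the current side, up to a multiplicative $O(\ell)$ correction arising from back-edges within the BFS structure. The reason is that a vertex of $N_{i+1}$ with too many neighbors in $N_i$ would, via the shortest-path tree, produce many distinct length-$\ell$ walks returning to $v$, and quantifying this carefully gives a pigeonhole bound whose constant is exactly the $(2\ell-3)$ prefactor appearing in the theorem.

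Iterating the expansion bound $\ell$ times, with growth factors alternating between $\delta_A$ and $\delta_B$ according to the side of the current layer, yields
\[
|N_\ell|\gtrsim \delta_A^{(\ell+1)/2}\,\delta_B^{(\ell-1)/2}\quad (\ell\text{ odd}),\qquad |N_\ell|\gtrsim \delta_A^{\ell/2}\,\delta_B^{\ell/2}\quad (\ell\text{ even}),
\]
up to the $(2\ell-3)$-type corrections. Since $N_\ell\subseteq B$ forces $|N_\ell|\leq m$, substituting $\delta_A\gtrsim e/n$ and $\delta_B\gtrsim e/m$ and rearranging gives $e^\ell \lesssim (2\ell-3)^{\Theta(1)}(nm)^{(\ell+1)/2}$ for odd $\ell$, whence $e\lesssim (2\ell-3)^{1/\ell}(nm)^{1/2+1/(2\ell)}$, and $e^\ell \lesssim (2\ell-3)^{\Theta(1)} n^{\ell/2} m^{\ell/2+1}$ for even $\ell$, whence $e\lesssim (2\ell-3)^{1/\ell}n^{1/2}m^{1/2+1/\ell}$. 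The stated form follows after absorbing the $(2\ell-3)^{1/\ell}$ factor into the coefficient and combining with the $2n$ cleanup term. The parity asymmetry in the theorem is thus a direct consequence of the parity asymmetry in how the $\ell+1$ walk-vertices distribute between the two parts.

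The main obstacle will be making the expansion lemma quantitatively sharp with the precise constant $2\ell - 3$. The naive statement ``each $N_{i+1}$-vertex has $O(1)$ neighbors in $N_i$'' is false for $i$ much smaller than $\ell-1$, since multiple back-edges at layer $i$ only force cycles of length $2i+2<2\ell$, which are permitted. The correct argument counts length-$\ell$ walks rather than BFS layers directly, and shows that in a $C_{2\ell}$-free graph the number of walks of length $\ell$ from $v$ that return via a back-edge at some layer is bounded by $(2\ell-3)$ times the walk count in a tree of comparable degrees. A secondary subtlety is the choice of root $v$: an averaging argument produces a $v$ whose first layer is dense, but the uniform control on subsequent layers for this specific $v$ must then be verified, which is where the minimum-degree lower bounds from pre-processing are essential.
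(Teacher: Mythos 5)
This theorem is not proved in the paper at all: it is quoted verbatim from Naor and Verstra\"{e}te \cite{NV}, so there is no in-paper argument to compare against. Measured against the actual proof in \cite{NV}, your strategy is the right one --- a bipartite adaptation of the Bondy--Simonovits breadth-first-search expansion, rooted on the side that forces $N_\ell$ into the part of size $m$ --- and your parity bookkeeping is correct: iterating the expansion and using $|N_\ell|\leq m$ does give $e^{\ell}\lesssim (nm)^{(\ell+1)/2}$ for odd $\ell$ and $e^{\ell}\lesssim n^{\ell/2}m^{\ell/2+1}$ for even $\ell$, which are exactly the stated exponents.

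The genuine gap is the expansion lemma itself, which is the entire mathematical content of the theorem and which you assert rather than prove. You correctly observe that the naive statement (``each vertex of $N_{i+1}$ has $O(1)$ neighbours in $N_i$'') is false for small $i$, but the replacement you sketch --- counting length-$\ell$ walks that return to $v$ via back-edges --- does not obviously produce a cycle of length \emph{exactly} $2\ell$; combining two walks from $v$ of length $\leq\ell$ only yields an even cycle of length \emph{at most} $2\ell$, and $C_{2\ell}$-freeness says nothing about shorter cycles. The actual argument of Bondy--Simonovits and of \cite{NV} requires a separate structural lemma about the bipartite graph induced between consecutive BFS layers (roughly: each component is close to a tree, because a long path or theta-subgraph there could be completed, using the BFS tree back to $v$, into a cycle of length exactly $2\ell$), and all of the delicacy --- including the origin of the constant $2\ell-3$ --- lives in that lemma. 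A secondary inaccuracy: the additive $2n$ term does not ``absorb the cleanup cost'' of deleting low-degree vertices (that cost is a constant \emph{fraction} of $e$, hence multiplicative); it covers the degenerate regime where $e=O(\ell n)$ and the minimum degree after cleanup is too small for the expansion to start. As written, this is a correct plan with the decisive lemma left unproven, so it does not constitute a proof.
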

\noindent
The following lemma is an easy corollary of Theorem \ref{thm:Zarankiewicz_even_cycles}.
\begin{lemma}\label{lem:high_degrees}
Let $\ell \geq 2$, let $G$ be an $n$-vertex graph and let
$X,Y \subseteq V(G)$ be disjoint sets such that the bipartite graph $(X,Y)$ is $C_{2\ell}$-free. Let $Y'$ be the set of all vertices in $Y$ having at least $d$ neighbours in $X$. Then
\begin{equation*}
|Y'| \leq
	\begin{cases}
	\max \{(6\ell/d)^{2\ell/(\ell-1)}n^{(\ell + 1)/(\ell - 1)}, 6\ell n/d\} & \ell \text{ is odd}, \\
	\max \{(6\ell/d)^{2\ell/(\ell-2)}n^{\ell/(\ell - 2)}, 6\ell n/d\} &
	\ell \text{ is even and } \ell \geq 4, \\
	2n/(d - n^{1/2}) & \ell = 2 \text{ and } d > n^{1/2}.
	\end{cases}
\end{equation*}
\end{lemma}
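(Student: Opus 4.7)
The plan is to apply Theorem \ref{thm:Zarankiewicz_even_cycles} to the bipartite subgraph $(X, Y')$. This graph is still $C_{2\ell}$-free, and by the definition of $Y'$ it has at least $d \cdot |Y'|$ edges. Writing $m := |Y'|$ and noting that $|X|, m \le n$ because $X$ and $Y'$ are disjoint subsets of an $n$-vertex graph, Theorem \ref{thm:Zarankiewicz_even_cycles} yields an inequality of the form $dm \le Z$, where $Z$ is the Zarankiewicz bound expressed in $n$ and $m$, which we then solve for $m$.

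For odd $\ell \ge 3$ the Zarankiewicz bound is symmetric in the two sides, so after using $|X| \cdot m \le nm$ and $\max(|X|, m) \le n$ we obtain $dm \le (2\ell - 3)\bigl((nm)^{1/2 + 1/(2\ell)} + 2n\bigr)$. Splitting into the two subcases according to which summand dominates, standard rearrangement of exponents yields either $m \le (2(2\ell-3)/d)^{2\ell/(\ell-1)} n^{(\ell+1)/(\ell-1)}$ or $m \le 4(2\ell-3)n/d$. Both estimates lie within the claimed bounds after absorbing $2(2\ell - 3)$ into $6\ell$.

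For even $\ell \ge 4$ the same strategy works, but the Zarankiewicz estimate is asymmetric in the two sides, and handling this asymmetry is the main technical point. When $|X| \ge m$ one immediately obtains $dm \le (2\ell - 3)(n^{1/2}m^{1/2 + 1/\ell} + 2n)$ and the two-subcase analysis proceeds as in the odd case. In the complementary subcase $|X| < m$ the estimate becomes $dm \le (2\ell - 3)(m^{1/2}|X|^{1/2 + 1/\ell} + 2m)$; here $|X|^{1/2 + 1/\ell} < m^{1/2 + 1/\ell}$ forces $d \le (2\ell - 3)(m^{1/\ell} + 2)$, from which one checks, using $m \le n$, that either the trivial estimate $m \le 6\ell n/d$ already suffices (when $d$ is small in terms of $\ell$) or the resulting bound on $m$ is no worse than the one obtained in the symmetric subcase.

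For $\ell = 2$ the Zarankiewicz bound reads $e(X, Y') \le |X|^{1/2} m + 2|X|$ in the natural orientation $m \le |X|$. Combined with $e(X, Y') \ge dm$ and $|X| \le n$, this rearranges to $(d - n^{1/2}) m \le 2n$, yielding the claimed $m \le 2n/(d - n^{1/2})$ as long as $d > n^{1/2}$. The reverse orientation $|X| < m$ gives $(d - 2)m^{1/2} \le |X| < m^{1/2}$, which forces $d < n^{1/2} + 2$, a narrow regime in which the claimed bound $2n/(d - n^{1/2})$ already exceeds $n$ and so holds trivially.
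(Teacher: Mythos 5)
Your overall strategy is the same as the paper's: feed $e(X,Y') \ge d\,|Y'|$ into Theorem \ref{thm:Zarankiewicz_even_cycles} and solve for $m = |Y'|$. The paper, however, always invokes the theorem in the form $z(n,m,C_{2\ell})$ with the larger slot set to $n$ (pad $X$ with isolated vertices, which preserves $C_{2\ell}$-freeness and the edge count, and note $m \le n$), so the asymmetry of the even-$\ell$ bound never arises. By instead orienting the theorem according to which of $|X|$ and $m$ is actually larger, you create a subcase ($|X| < m$, $\ell$ even, $\ell \ge 4$) that your argument does not close. From $dm \le (2\ell-3)\bigl(m^{1/2}|X|^{1/2+1/\ell} + 2m\bigr)$ and $|X| < m$ you deduce $d \le (2\ell-3)(m^{1/\ell}+2)$; but this is a \emph{lower} bound on $m$, not an upper bound, so there is no ``resulting bound on $m$'' to compare with the symmetric subcase. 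Nor does the fallback cover the remaining range: in this subcase $d$ can be as large as roughly $2\ell n^{1/\ell}$, which is not ``small in terms of $\ell$,'' and for intermediate $d$ the trivial estimate $m \le n$ need not lie below the claimed maximum (for instance, when $n < (4/3)^{\ell}$ both terms of the maximum can be smaller than $n$). As written, this step is a genuine gap. The repair is exactly the paper's move: use monotonicity of $z$ in the side sizes to write $e(X,Y') \le z(n,m,C_{2\ell})$ with $m \le n$ unconditionally, after which only the one computation is needed.

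Two smaller points. In the odd case, splitting ``according to which summand dominates'' gives the constant $4(2\ell-3) = 8\ell - 12$ in the second branch, which exceeds $6\ell$ once $\ell \ge 7$; split instead on whether $(nm)^{1/2+1/(2\ell)} \ge n$, as the paper does, which yields $3(2\ell-3) \le 6\ell$ in both branches. In the $\ell = 2$ reverse orientation you wrote $(d-2)m^{1/2} \le |X| < m^{1/2}$; the correct comparison is $|X| < m$, but the conclusion $d < n^{1/2}+2$, and hence the triviality of the claimed bound in that regime, still stands.
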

\begin{proof}
Note that
\begin{equation}\label{eq:min_degree edge bound}
d |Y'| \leq e(Y',X) \leq z(n,|Y'|,C_{2\ell}).
\end{equation}
Suppose first that $\ell$ is odd. We apply Theorem \ref{thm:Zarankiewicz_even_cycles} with parameter
$m = |Y'|$. If $(|Y'|n)^{1/2 + 1/(2\ell)} \geq n$ then Theorem \ref{thm:Zarankiewicz_even_cycles} gives $z(n,|Y'|,C_{2\ell}) \leq 6\ell (|Y'|n)^{1/2 + 1/(2\ell)}$, and if
$(|Y'|n)^{1/2 + 1/(2\ell)} \leq n$ then Theorem \ref{thm:Zarankiewicz_even_cycles} gives
$z(n,|Y'|,C_{2\ell}) \leq 6\ell n$.
By combining these inequalities with
\eqref{eq:min_degree edge bound} we get that either
$|Y'| \leq (6\ell/d)^{2\ell/(\ell-1)}n^{(\ell + 1)/(\ell - 1)}$
or
$|Y'| \leq 6\ell n / d$, as required.


Suppose now that $\ell$ is even and $\ell \geq 4$.
By Theorem \ref{thm:Zarankiewicz_even_cycles}, we have
$z(n,|Y'|,C_{2\ell}) \leq 6\ell n^{1/2} |Y'|^{1/2 + 1/\ell}$
if
$n^{1/2} |Y'|^{1/2 + 1/\ell} \geq n$
and
$z(n,|Y'|,C_{2\ell}) \leq 6\ell n$ otherwise. By combining these inequalities with \eqref{eq:min_degree edge bound} we get that
either $|Y'| \leq (6\ell/d)^{2\ell/(\ell-2)}n^{\ell/(\ell - 2)}$
or
$|Y'| \leq 6\ell n / d$, as required.

Finally, suppose that $\ell = 2$ and that $d > n^{1/2}$. Theorem \ref{thm:Zarankiewicz_even_cycles} gives $z(n,|Y'|,C_4) \leq n^{1/2}|Y'| + 2n$. By combining this with
\eqref{eq:min_degree edge bound} we get that $|Y'| \leq 2n/(d - n^{1/2})$, as required.
\end{proof}

\noindent
We are now ready to prove Lemma \ref{lem:P3 even_cycle main}.

\begin{proof}[Proof of Lemma \ref{lem:P3 even_cycle main}]
We start by considering the case of even $\ell \geq 4$.
Define the sets $Y' = \{ y \in Y : |N_X(y)| \geq \ell n^{2/(\ell+2)} \}$ and $Z' = \{ z \in Z : |N_W(z)| \geq \ell n^{2/(\ell+2)} \}$. Apply Lemma \ref{lem:high_degrees} with $d = \ell n^{2/(\ell+2)}$ to get
$|Y'|,|Z'| \leq O(n^{\ell/(\ell + 2)})$.
By plugging these bounds into Theorem \ref{thm:Zarankiewicz_even_cycles}, one can check that
$e(Y',X),e(Y',Z),e(Z',Y),e(Z',W) \leq
z(n,O(n^{\ell/(\ell + 2)}),C_{2\ell}) = O(\ell n)$.
Next, note that by the definitions of $Y'$ and $Z'$ we have
\begin{align*}
p(X,Y \setminus Y' ,Z \setminus Z' ,W) &<
e(Y \setminus Y',Z \setminus Z') \cdot
\ell n^{2/(\ell+2)} \cdot \ell n^{2/(\ell+2)} \\ &\leq
z(n,n,C_{2\ell}) \cdot \ell^2 n^{4/(\ell+2)} \leq
O(\ell^3 n^{1 + 1/\ell + 4/(\ell+2)}),
\end{align*}
where in the last inequality we used Theorem \ref{thm:Zarankiewicz_even_cycles}. So if
$\ell^3 n^{1 + 1/\ell + 4/(\ell+2)} \leq \ell^2 n^2$ then we get the required bound $p(X,Y \setminus Y' ,Z \setminus Z' ,W) = O(\ell^2 n^2)$, and the proof is complete (for even $\ell$). Otherwise, we have
$\ell^3 n^{1 + 1/\ell + 4/(\ell+2)} > \ell^2 n^2$ and hence
$n < \ell^{\ell(\ell+2)/(\ell^2-3\ell-2)} =
\ell \cdot \ell^{(5\ell+2)/(\ell^2-3\ell-2)} \leq O(\ell)$. Since
$p(X,Y,Z,W) \leq n^4$, we have
$p(X,Y,Z,W) \leq n^4 = O(\ell^2 n^2)$, and again we are done.

We now consider the case of odd $\ell \geq 3$. 
We define
$Y' = \{ y \in Y : |N_X(y)| \geq \ell n^{2/(\ell+1)} \}$ and
$Z' = \{ z \in Z : |N_W(z)| \geq \ell n^{2/(\ell+1)} \}$. Similarly to the previous case, we apply Lemma \ref{lem:high_degrees} with
$d = \ell n^{2/(\ell+1)}$ to obtain
$|Y'|,|Z'| \leq O(n^{(\ell - 1)/(\ell + 1)})$. We then plug these bounds into Theorem \ref{thm:Zarankiewicz_even_cycles} to get
$e(Y',X),e(Y',Z),e(Z',Y),e(Z',W) \leq
z(n,O(n^{(\ell - 1)/(\ell + 1)}),C_{2\ell}) = O(\ell n)$. 
It remains to bound $p(X,Y \setminus Y' ,Z \setminus Z' ,W)$. Assume first that $\ell \geq 5$. 
By the definitions of $Y'$ and $Z'$ we have
\begin{align*}
p(X,Y \setminus Y' ,Z \setminus Z' ,W) &<
e(Y \setminus Y',Z \setminus Z') \cdot
\ell n^{2/(\ell+1)} \cdot \ell n^{2/(\ell+1)} \\ &\leq
z(n,n,C_{2\ell}) \cdot \ell^2 n^{4/(\ell+1)} \leq
O(\ell^3 n^{1 + 1/\ell + 4/(\ell+1)}),
\end{align*}
where in the last inequality we used Theorem \ref{thm:Zarankiewicz_even_cycles}. If
$\ell^3 n^{1 + 1/\ell + 4/(\ell+1)} \leq \ell^2 n^2$ then by the above we have 
$p(X,Y \setminus Y' ,Z \setminus Z' ,W) = O(\ell^2 n^2)$, as required. Otherwise, we have
$\ell^3 n^{1 + 1/\ell + 4/(\ell+1)} > \ell^2 n^2$ and hence
$n < \ell^{\ell(\ell+1)/(\ell^2-4\ell-1)} =
\ell \cdot \ell^{(5\ell+1)/(\ell^2-4\ell-1)} = O(\ell)$. But then
$p(X,Y,Z,W) \leq n^4 = O(\ell^2 n^2)$, and again we are done.

Thus, it remains to show that $p(X,Y \setminus Y' ,Z \setminus Z' ,W) = O(n^2)$ when $\ell = 3$. Recall that in this case we defined
$Y' = \{ y \in Y : |N_X(y)| \geq 3 n^{1/2} \}$ and similarly
$Z' = \{ z \in Z : |N_W(z)| \geq 3 n^{1/2} \}$.
We need some additional definitions.
Define $Y_{low} = \{y \in Y: |N_X(y)| < n^{1/3}\}$ and similarly
$Z_{low} = \{z \in Z: |N_W(z)| < n^{1/3}\}$. Define
$\mathcal{I} = \{i : \frac{1}{2}n^{1/3} \leq 2^i < 3n^{1/2}\}$, and for each
$i \in \mathcal{I}$ set
$Y_i = \left\{ y \in Y : 2^{i} \leq |N_X(y)| < 2^{i+1} \right\}$ and
$Z_i = \left\{ z \in Z : 2^{i} \leq |N_W(z)| < 2^{i+1} \right\}$. It is immediate from these definitions that
$Y \setminus Y' \subseteq Y_{low} \cup \bigcup_{i \in I}{Y_i}$
and similarly
$Z \setminus Z' \subseteq Z_{low} \cup \bigcup_{i \in I}{Z_i}$.
Note that
\begin{equation*}
p(X,Y_{low},Z_{low},W) <
e(Y_{low},Z_{low}) \cdot n^{1/3} \cdot n^{1/3} \leq
z(n,n,C_6) \cdot n^{2/3} \leq O(n^2),
\end{equation*}
where in the last inequality we used Theorem \ref{thm:Zarankiewicz_even_cycles}.
Hence, in order to finish the proof we need to bound
$p(X,\bigcup_{i \in I}{Y_i},Z_{low},W)$,
$p(X,Y_{low},\bigcup_{i \in I}{Z_i},W)$
and
$p(X,\bigcup_{i \in I}{Y_i},\bigcup_{i \in I}{Z_i},W)$.
We start with the first two terms.
Fix any $i \in \mathcal{I}$.
By Lemma \ref{lem:high_degrees} with $d = 2^i$, we have
$|Y_i| \leq \max\{ 18^{3} \cdot 2^{-3i} \cdot n^2, 18 \cdot 2^{-i} \cdot n\} =
O(2^{-3i} \cdot n^2)$, where we used the fact that
$9 \cdot 2^{-3i} n^2 > 2^{-i}n$, which follows from $2^i < 3n^{1/2}$. So we get
$$
e(Y_i,Z_{low}) \leq z(|Y_i|,n,C_6) \leq
3 \cdot \left( (|Y_i|n)^{2/3} + 2n \right) \leq
3 \cdot \left( O(n^2 2^{-2i}) + 2n \right) \leq
O(n^2 \cdot 2^{-2i}),
$$
where in the second inequality we used Theorem \ref{thm:Zarankiewicz_even_cycles}, and in the last inequality we used
$n^2 \cdot 2^{-2i} > n/9$ which follows from $2^i < 3n^{1/2}$.
Now we have
\begin{align*}
p(X,\bigcup{Y_i},Z_{low},W) &=
\sum_{i \in \mathcal{I}}{p(X,Y_i,Z_{low},W)} <
\sum_{i \in \mathcal{I}}{e(Y_i,Z_{low}) \cdot 2^{i+1} \cdot n^{1/3}} \leq
\sum_{i \in \mathcal{I}}{O(n^2 \cdot 2^{-2i}) \cdot 2^{i+1} \cdot n^{1/3}} \\ &=
O(n^{7/3}) \cdot \sum_{i \in \mathcal{I}}{2^{-i}} \leq
O(n^{7/3}) \cdot \sum_{i : \; 2^i \geq \frac{1}{2}n^{1/3}}{2^{-i}} =
O(n^{7/3}) \cdot O(n^{-1/3}) =
O(n^2),
\end{align*}
where in the first inequality we used the definitions of $Z_{low}$ and $Y_i$, and in the last inequality we used the definition of $\mathcal{I}$.
The bound
$p(X,Y_{low},\bigcup{Z_i},W) = O(n^2)$
is proved similarly.

Finally, we bound $p(X,\bigcup{Y_i},\bigcup{Z_i},W)$. To this end, fix any
$i,j \in \mathcal{I}$.
We showed above that $|Y_i| \leq O(n^2 \cdot 2^{-3i})$. By the same argument we get $|Z_j| \leq O(n^2 \cdot 2^{-3j})$. Thus we have
\begin{align*}
e(Y_i,Z_j) &\leq
z(|Y_i|,|Z_j|,C_6) \leq
3 \cdot \left( (|Y_i||Z_j|)^{2/3} + |Y_i| + |Z_j| \right) \\ &\leq
O(n^{8/3}) \cdot 2^{-2i} \cdot 2^{-2j} + O(n^2) \cdot (2^{-3i} + 2^{-3j})
\leq
O(n^{8/3}) \cdot 2^{-2i} \cdot 2^{-2j},
\end{align*}
where in the second inequality we used Theorem \ref{thm:Zarankiewicz_even_cycles}, and in the last inequality we used the fact that
$18 n^{8/3} \cdot 2^{-2i} \cdot 2^{-2j} \geq \max
\{ n^2 2^{-3i}, n^2 2^{-3j} \}$,
which follows from
$\frac{1}{2} n^{1/3} \leq 2^i,2^j < 3n^{1/2}$. Now we get
\begin{align*}
p(X,\bigcup{Y_i},\bigcup{Z_i},W) &=
\sum_{i,j \in \mathcal{I}}{p(X,Y_i,Z_j,W)} \leq
\sum_{i,j \in \mathcal{I}}{e(Y_i,Z_j) \cdot 2^{i+1} \cdot 2^{j+1}} \\ &\leq
\sum_{i,j \in \mathcal{I}}{O(n^{8/3}) \cdot 2^{-2i} \cdot 2^{-2j} \cdot 2^{i+1} \cdot 2^{j+1}}
=
O(n^{8/3}) \cdot \sum_{i,j \in \mathcal{I}}{2^{-i} \cdot 2^{-j}}
\\ &\leq
O(n^{8/3}) \cdot \sum_{2^i,2^j \geq \frac{1}{2}n^{1/3}}{2^{-i} \cdot 2^{-j}} =
O(n^{8/3}) \cdot O(n^{-2/3}) = O(n^2),
\end{align*}
where in the first inequality we used the definitions of $Y_i$ and $Z_j$, and in the last inequality we used the definition of $\mathcal{I}$. This completes the proof.
\end{proof}

Let us explain why the sets $Y'$ and $Z'$ in Lemma \ref{lem:P3 even_cycle main} are required, (namely, that the statement $p(X,Y,Z,W) = O_{\ell}(n^2)$ is generally false).
Note that by Theorem \ref{thm:Zarankiewicz_even_cycles}, the average degree between the four sets in Lemma
\ref{lem:P3 even_cycle main}
is $O(n^{1/3})$. One might thus guess that
$p(X,Y,Z,W) = O(n\cdot(n^{1/3})^3)=O(n^2)$. To see that this is not the case, we can take $Y$ to be a single vertex connected to all the vertices of $X$ and $Z$, distribute all other vertices equally among $X$, $Z$ and $W$, and take the bipartite graph between $Z,W$ to be
an extremal graph with no $C_{2\ell}$. Although this example satisfies $p(X,Y,Z,W) \gg n^2$, by removing the single vertex of $Y$
we can make sure that $p(X,Y,Z, W) = O(n^2)$. This is precisely what Lemma
\ref{lem:P3 even_cycle main} states.
What we see in the proof of Theorem \ref{thm:ex_path_cycle} is that if one assumes that the
{\em entire} graph is $C_{2\ell}$-free (and not just
the $3$ bipartite graphs between the $4$ sets) then one no longer needs to remove vertices in order to guarantee that $p(X,Y,Z,W) = O_{\ell}(n^2)$.	

Let us note that Lemma \ref{lem:P3 even_cycle main} does not hold for $\ell = 2$. Indeed, in the proof of Lemma \ref{lem:C4} we construct an $n$-vertex $C_4$-free graph, in which every vertex has degree $\Theta(n^{1/2})$ and lies on $\Theta(n^{3/2})$ paths of length $3$. Taking a random vertex partition of this graph into four sets $X,Y,Z,W$, we see that with high probability, every vertex $y \in Y$ (resp. $z \in Z$) has $\Theta(n^{1/2})$ neighbours in $X$ (resp. $W$), and every vertex in the graph lies on $\Theta(n^{3/2})$ $(X,Y,Z,W)$-paths. Suppose now, by contradiction, that the assertion of Lemma \ref{lem:P3 even_cycle main} holds for the sets $X,Y,Z,W$. Since every $y \in Y$ has $\Theta(n^{1/2})$ neighbours in $X$, and since
$e(Y',X) = O(n)$, we must have $|Y'| = O(n^{1/2})$. Similarly, $|Z'| = O(n^{1/2})$. As every vertex lies on $\Theta(n^{3/2})$ $(X,Y,Z,W)$-paths, we have $p(X,Y,Z,W) = \Theta(n^{5/2})$ and
$p(X,Y',Z,W), p(X,Y,Z',W) = O(n^2)$. But this implies that
$p(X,Y \setminus Y', Z \setminus Z', W) = \Theta(n^{5/2})$, in contradiction to the statement of Lemma \ref{lem:P3 even_cycle main}.

\section{Lower Bound on $\mbox{ex}(n,C_k,C_{\ell})$ and
	$\mbox{ex}(n,P_k,C_{\ell})$}\label{sec:lower}

In this section we prove all lower bounds in Theorem \ref{thm:main} and Proposition \ref{prop:C3_even_cycle}. We start with the following two claims, which handle the case where the forbidden cycle is {\em not} $C_4$. Claim \ref{prop:blowup_construction_cycles} gives lower bounds on $\ex(n,C_k,C_{\ell})$ and $\ex(n,P_k,C_{\ell})$ with the correct dependence on $n$, whenever $\ell \neq 4$. To get the correct dependence on $\ell$ for $\ell \gg k$, we need Claim \ref{prop:blowup_construction_general}, which gives a general lower bound for $\ex(n,T,H)$, but is only applicable when $H$ (that is, $C_\ell$) is somewhat larger than $T$ (that is, $C_k$ or $P_k$). To prove the lower bound for all values of $k$ and $\ell \neq 4$, we need to combine these two claims, which is done in Corollary \ref{cor:lower_bounds}.
For a graph $G$, denote by $\alpha(G)$ the independence number of $G$.
\begin{claim}\label{prop:blowup_construction_cycles}
	For a pair of distinct $k \geq 3$ and $4 \neq \ell \geq 3$ we have
	$\ex(n,C_k,C_{\ell}) =
	\Omega_k \big( n^{\lfloor k/2 \rfloor} \big)$.
	For $k \geq 2$ and $4 \neq \ell \geq 3$ we have
	$\ex(n,P_k,C_{\ell}) =
	\Omega_k \big( n^{\lceil (k+1)/2 \rceil} \big)$.
\end{claim}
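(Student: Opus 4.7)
\textbf{Plan for proving Claim \ref{prop:blowup_construction_cycles}.} For both parts of the claim, I will use a single unified \emph{blowup} construction.

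For the $C_k$ part, label $C_k$ as $v_1 v_2 \cdots v_k v_1$ and let $I = \{v_1, v_3, v_5, \ldots\}$ be a maximum independent set of $C_k$, of size $s = \lfloor k/2 \rfloor$. Let $G$ be the $n$-vertex blowup of $C_k$ obtained by replacing each $v_i \in I$ with an independent set $V_i$ of size approximately $n/s$, keeping each $v_j \notin I$ as a single vertex, with edges inherited from $C_k$ in the usual blowup sense. I claim $G$ satisfies the required properties. First, for every choice of one representative $u_i \in V_i$ for each $v_i \in I$, the set $\{u_i : v_i \in I\} \cup \{v_j : v_j \notin I\}$ spans a copy of $C_k$ in $G$ via the ``direct traversal'' $u_1 \to v_2 \to u_3 \to v_4 \to \cdots$, producing at least $\prod_{v_i \in I} |V_i| = \Theta_k(n^{\lfloor k/2 \rfloor})$ copies of $C_k$.

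Second, I will verify that $G$ contains no $C_\ell$ for $\ell \notin \{4, k\}$. Given a hypothetical cycle $C$ of length $\ell$ in $G$, project each of its vertices to its preimage in $C_k$, obtaining a closed walk $W$ of length $\ell$ in $C_k$. Because each singleton $v_j \notin I$ is represented by a single vertex of $G$ and $C$ has distinct vertices, $W$ visits each singleton at most once, while the vertices in $I$ may be visited multiple times. Crucially, $I$ is independent in $C_k$, so two consecutive positions of $W$ lying in $I$ must be separated by at least one singleton-visit, except possibly across the unique singleton-singleton edge $v_{k-1}v_k$ (which is present iff $k$ is odd). Letting $r$ and $s'$ denote the numbers of $I$-visits and singleton-visits of $W$, this gives $s' \geq r - 1$, together with $s' \leq \lceil k/2 \rceil$, whence $\ell = r + s' \leq 2\lceil k/2 \rceil + 1$. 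A short case analysis then rules out each $\ell \notin \{4, k\}$: for $\ell = 3$ no triangle exists since no three vertices of $C_k$ are pairwise adjacent; for other $\ell \notin \{4, k\}$, a direct enumeration of closed walks of length $\ell$ in $C_k$ shows that each such walk must revisit some singleton, contradicting the constraint. The surviving realizable lengths are precisely $\ell = k$ (direct traversal) and $\ell = 4$ (from two distinct vertices of some $V_i$ together with the two singleton neighbors of $v_i$ in $C_k$), both of which are excluded.

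For the $P_k$ part, I perform the analogous construction: label $P_k = v_0 v_1 \cdots v_k$, let $J = \{v_0, v_2, v_4, \ldots\}$ be a maximum independent set of $P_k$ of size $t = \lceil (k+1)/2 \rceil$, and blow up each $v_i \in J$ to a set of size approximately $n/t$ while keeping the rest as singletons. The direct traversal of $P_k$ yields $\prod_{v_i \in J} |V_i| = \Theta_k(n^{\lceil (k+1)/2 \rceil})$ copies of $P_k$. Since $P_k$ is a tree, any closed walk in $P_k$ must fold back, and the singleton-visit constraint leaves only the length-$4$ walks of the form $v_i \to v_{i\pm 1} \to v_i \to v_{i\mp 1} \to v_i$ realizable as simple cycles in the blowup, yielding only $C_4$'s (from a pair in some $V_{v_i}$ together with the two singleton neighbors of $v_i$). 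Hence $G$ is $C_\ell$-free for every $\ell \neq 4$, as required.

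The main technical obstacle will be the walk analysis for the $C_k$ case, especially when $k$ is odd: the singleton-singleton edge $v_{k-1}v_k$ gives closed walks slightly more flexibility, so one must carefully enumerate the possible closed walks of each length $\ell \notin \{4, k\}$ and confirm that any such walk must revisit some singleton, thereby failing to correspond to a simple cycle in $G$.
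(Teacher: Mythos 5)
Your proposal is correct and uses essentially the same construction as the paper: blow up a maximum independent set of $C_k$ (resp.\ $P_k$) to classes of size $\Theta_k(n)$ while keeping the remaining vertices as singletons, count the $m^{\alpha}$ direct-traversal copies, and observe that the only cycle lengths arising are $4$ and $k$ (resp.\ only $4$), which are excluded by hypothesis. The paper leaves the $C_\ell$-freeness check as an easy verification, and your walk analysis is just a more explicit version of that same check.
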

\begin{proof}
	We start with the first part of the claim.
	Let $I$ be a maximum independent set of the $k$-cycle $1,\dots,k$.
	Replace each $i \in I$ with a vertex-set of size $m$, where different vertices are replaced with disjoint sets and all of these sets are disjoint from $[k] \setminus I$. Edges of $C_k$ are replaced with complete bipartite graphs. In other words, we take a blowup of $C_{k}$ in which vertices $i \in [k] \setminus I$ are not blown up, while vertices $i \in I$ are blown up to size $m$.
	As $|I| = \alpha(C_k) = \lfloor k/2 \rfloor$, the resulting graph has
	$n := \lfloor k/2 \rfloor \cdot m + \lceil k/2 \rceil$ vertices and
	$m^{|I|} = m^{\lfloor k/2 \rfloor} =
	\Omega_k \big( n^{\lfloor k/2 \rfloor} \big)$ copies of $C_{k}$. It is easy to check that this graph is $C_{\ell}$-free by our assumptions that $\ell \neq k$ and
	$\ell \neq 4$.
	
	We now prove the second part of the claim using a similar construction. Let $I$ be a maximum independent set of the path $P_k$ on the vertices $1,\dots,k+1$. Replace each $i \in I$ with a vertex-set of size $m$, where different vertices are replaced with disjoint sets and all of these sets are disjoint from
	$[k+1] \setminus I$. Edges of $P_k$ are replaced with complete bipartite graphs.
	As $|I| = \alpha(P_k) = \lceil (k+1)/2 \rceil$, the resulting graph has
	$n := \lceil (k+1)/2 \rceil \cdot m + \lfloor (k+1)/2 \rfloor$ vertices and
	$m^{|I|} = m^{\lceil (k+1)/2 \rceil} = \Omega_k ( n^{\lceil (k+1)/2 \rceil} )$ copies of $P_k$. It is easy to check that this graph is $C_{\ell}$-free by our assumptions that
	$\ell \neq 4$.
\end{proof}
\begin{claim}\label{prop:blowup_construction_general}
	Let $T,H$ be graphs on $t$ and $h$ vertices, respectively, such that 
	$h - \alpha(H) - 1 \geq t - \alpha(T)$. Then for every
	$n \geq h - \alpha(H) - 1 + \alpha(T)$,
	it holds that
	$\ex(n,T,H) \geq
	\Omega_{t}
	\left( (h-\alpha(H))^{t - \alpha(T)}n^{\alpha(T)} \right)$.
\end{claim}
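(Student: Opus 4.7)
The plan is to construct the extremal graph as a blow-up of $T$. I would fix a maximum independent set $I \subseteq V(T)$ with $|I| = \alpha(T)$, set $C := V(T)\setminus I$ with $|C| = t - \alpha(T)$, and choose $b := \lfloor (h-\alpha(H)-1)/(t-\alpha(T)) \rfloor$, which is at least $1$ by the hypothesis. I would then pick $m \ge 1$ with $\alpha(T)m + (t-\alpha(T))b \le n$; the assumption $n \ge h-\alpha(H)-1+\alpha(T)$ together with $(t-\alpha(T))b \le h-\alpha(H)-1$ makes this possible, and for large $n$ one has $m = \Theta_t(n)$. The graph $G$ is obtained by replacing each $v \in I$ with an independent set $V_v$ of size $m$, each $v \in C$ with an independent set $V_v$ of size $b$, replacing each edge of $T$ with the corresponding complete bipartite graph between the relevant blow-up parts, and padding with isolated vertices to reach $n$.

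The copy count should be immediate: every tuple $(x_v)_{v \in V(T)}$ with $x_v \in V_v$ yields a distinct copy of $T$, since the $V_v$ are pairwise disjoint and the edge structure on $\{x_v\}$ coincides with that of $T$, giving at least $m^{\alpha(T)}b^{t-\alpha(T)}$ copies. For $n$ sufficiently large relative to $h-\alpha(H)$, this is $\Omega_t(n^{\alpha(T)}(h-\alpha(H))^{t-\alpha(T)})$, using the bound $b \ge (h-\alpha(H)-1)/(t-\alpha(T))-1$ together with $m = \Theta_t(n)$.

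The main step will be verifying that $G$ is $H$-free. I would suppose for contradiction that $\psi\colon V(H) \hookrightarrow V(G)$ is an embedding and consider the natural projection $\pi\colon V(G) \to V(T)$ sending $V_v \mapsto v$; then $\phi := \pi \circ \psi \colon H \to T$ is a homomorphism with $|\phi^{-1}(v)| \le |V_v|$ for every $v$. Since $I$ is independent in $T$ and $\phi$ preserves edges, $\phi^{-1}(I)$ is an independent set of $H$, forcing $|\phi^{-1}(I)| \le \alpha(H)$ and hence $|\phi^{-1}(C)| \ge h - \alpha(H)$. Pigeonhole over the $t-\alpha(T)$ vertices of $C$ then yields some $v \in C$ with $|\phi^{-1}(v)| \ge \lceil (h-\alpha(H))/(t-\alpha(T)) \rceil > b = |V_v|$, the desired contradiction.

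The hard part will be calibrating $b$: it must be as large as possible to make $b^{t-\alpha(T)}$ comparable to $(h-\alpha(H))^{t-\alpha(T)}$, yet strictly smaller than $\lceil (h-\alpha(H))/(t-\alpha(T))\rceil$ so that the pigeonhole rules out every homomorphism $H \to T$. For values of $n$ close to the lower threshold $h-\alpha(H)-1+\alpha(T)$, where $m$ is forced to be $O_t(1)$, the blow-up alone gives a weaker count, and I expect that one also needs to compare with an auxiliary dense construction (e.g., $K_{\min(n,h-1)}$ plus isolated vertices, which is $H$-free and contains $\Theta_t(h^t)$ copies of $T$) in order to see that the claimed bound holds uniformly over the full stated range of $n$.
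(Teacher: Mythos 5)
Your construction and your $H$-freeness argument coincide with the paper's: blow up $T$ so that the classes corresponding to $V(T)\setminus I$ (for $I$ a maximum independent set of $T$) have total size at most $h-\alpha(H)-1$; then any copy of $H$ would be forced to put at least $\alpha(H)+1$ of its vertices into the union $\bigcup_{v\in I}V_v$, which is an independent set of $G$ -- your homomorphism-plus-pigeonhole phrasing is just the contrapositive of this counting. Two remarks. First, rather than padding with isolated vertices you should absorb the leftover vertices into the classes $V_v$, $v\in I$, as the paper does: with padding, an isolated vertex of $H$ could be embedded outside $\bigcup_v V_v$, where your projection $\pi$ is undefined and the count $|\phi^{-1}(V(T)\setminus I)|\geq h-\alpha(H)$ can fail by one (a degenerate case, irrelevant when $H$ is a cycle, but the claim is stated for general $H$). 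Second, your closing concern is legitimate and is in fact a point the paper's proof elides: the assertion that the blow-up contains $\Omega_{t}\bigl((h-\alpha(H))^{t-\alpha(T)}n^{\alpha(T)}\bigr)$ copies of $T$ requires the independent classes to have size $\Omega_t(n)$, i.e.\ $n\geq C_t(h-\alpha(H))$, which the stated threshold $n\geq h-\alpha(H)-1+\alpha(T)$ does not guarantee; for the remaining range one does need an auxiliary graph such as your $K_{\min(n,h-1)}$ plus isolated vertices (or one simply restricts to $n=\Omega_t(h)$, which is all that is used in Corollary \ref{cor:lower_bounds}).
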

\begin{proof}
	Suppose that $V(T) = \{1,\dots,t\}$ and let $I$ be a maximum independent set of $T$.
	Let $U_1,\dots,U_t$ be disjoint vertex-sets such that 
	$|U_1| + \dots + |U_t| = n$ and such that the following holds:
	$\sum_{i \in V(T) \setminus I}{|U_i|} = h - \alpha(H) - 1$, these
	$h - \alpha(H) - 1$ vertices are divided as equally as possible among the $t - \alpha(T)$ sets
	$(U_i)_{i \in V(T) \setminus I}$, and the
	$n - h + \alpha(H) + 1$ vertices of $\bigcup_{i \in I}{U_i}$ are divided as equally as possible among $(U_i)_{i \in I}$. Then none of $U_1,\dots,U_t$ is empty by the assumptions of the claim.
	Define a graph $G$ on $U_1 \cup \dots \cup U_t$ by making $(U_i,U_j)$ a complete bipartite graph if $\{i,j\} \in E(T)$, and an empty bipartite graph otherwise (there are no edges inside the sets $U_1,\dots,U_t$).
	Then $G$ has
	$\Omega_{t}
	\left( (h-\alpha(H))^{t - \alpha(T)}n^{\alpha(T)} \right)$
	copies of $T$. It remains to show that $G$ is $H$-free. Assume by contradiction that there is a copy of $H$ in $G$. Then this copy contains two adjacent vertices which are both in $\bigcup_{i \in I}{U_i}$, since
	$\sum_{i \in V(T) \setminus I}{|U_i|} < h - \alpha(H)$. But
	$\bigcup_{i \in I}{U_i}$ is an independent set in $G$, as $I$ is an independent set in $T$ and $G$ is a blowup of $T$, a contradiction.
\end{proof}
We are now ready to prove the lower bounds in the last two items of Theorem \ref{thm:main} and in the second item of Theorem \ref{thm:ex_path_cycle}. In other words, we handle all cases in which the forbidden cycle is not $C_4$. 
\begin{corollary}\label{cor:lower_bounds}
For a pair of distinct $k \geq 3$ and $4 \neq \ell \geq 3$ we have
$\ex(n,C_k,C_{\ell}) =
\Omega_k \big( \ell^{\lceil k/2 \rceil} n^{\lfloor k/2 \rfloor} \big)$.
For $k \geq 2$ and $4 \neq \ell \geq 3$ we have
$\ex(n,P_k,C_{\ell}) =
\Omega_k(\ell^{\lfloor (k+1)/2 \rfloor}n^{\lceil (k+1)/2 \rceil})$.
\end{corollary}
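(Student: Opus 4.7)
The plan is to combine the two preparatory claims by splitting into two regimes according to the size of $\ell$ relative to $k$. Claim \ref{prop:blowup_construction_cycles} already supplies the correct dependence on $n$ (with no dependence on $\ell$), while Claim \ref{prop:blowup_construction_general} supplies the correct dependence on $\ell$, but only when $\ell$ is large enough for its hypothesis to apply. Neither alone gives the full corollary, but together they cover all $\ell \neq 4$.

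Specifically, for the cycle bound, I would instantiate Claim \ref{prop:blowup_construction_general} with $T = C_k$ and $H = C_{\ell}$, noting that $\alpha(C_k) = \lfloor k/2 \rfloor$ (so $t - \alpha(T) = \lceil k/2 \rceil$) and $h - \alpha(H) = \lceil \ell/2 \rceil = \Theta(\ell)$. Its hypothesis becomes $\lceil \ell/2 \rceil - 1 \geq \lceil k/2 \rceil$. When this holds, the claim delivers
\[
\ex(n, C_k, C_{\ell}) \;\geq\; \Omega_k\!\left( \lceil \ell/2 \rceil^{\lceil k/2 \rceil} n^{\lfloor k/2 \rfloor} \right) \;=\; \Omega_k\!\left(\ell^{\lceil k/2 \rceil} n^{\lfloor k/2 \rfloor}\right),
\]
as desired. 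When this hypothesis fails, we have $\ell \leq k+2$, and so $\ell^{\lceil k/2 \rceil} = O_k(1)$; thus the target bound $\Omega_k(\ell^{\lceil k/2 \rceil} n^{\lfloor k/2 \rfloor})$ collapses to $\Omega_k(n^{\lfloor k/2 \rfloor})$, which is exactly what Claim \ref{prop:blowup_construction_cycles} supplies (and which is valid precisely because $\ell \neq 4$ and $\ell \neq k$).

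The path bound is proved in the same manner: apply Claim \ref{prop:blowup_construction_general} with $T = P_k$ and $H = C_{\ell}$, using $\alpha(P_k) = \lceil (k+1)/2 \rceil$, so that $t - \alpha(T) = \lfloor (k+1)/2 \rfloor$ and the hypothesis reads $\lceil \ell/2 \rceil - 1 \geq \lfloor (k+1)/2 \rfloor$. In the large-$\ell$ regime the claim yields $\Omega_k(\ell^{\lfloor (k+1)/2 \rfloor} n^{\lceil (k+1)/2 \rceil})$, and in the small-$\ell$ regime the prefactor $\ell^{\lfloor (k+1)/2 \rfloor}$ is $O_k(1)$, so the bound reduces to $\Omega_k(n^{\lceil (k+1)/2 \rceil})$, which follows from the second part of Claim \ref{prop:blowup_construction_cycles}.

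There is no real obstacle here; the only care needed is to check that the two regimes overlap (or at worst meet) so that every pair $(k,\ell)$ with $\ell \neq 4$ is covered by at least one claim, and to verify the lower-order hypothesis $n \geq h - \alpha(H) - 1 + \alpha(T)$ of Claim \ref{prop:blowup_construction_general}, which is automatic for all sufficiently large $n$ (the only regime in which the asymptotic statement is meaningful). All $k$-dependent constants can be absorbed into the $\Omega_k$ notation, so the case-stitching is clean.
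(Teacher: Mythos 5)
Your proposal is correct and is essentially the paper's own argument: the paper likewise splits on whether $\ell$ is small relative to $k$ (where the $\ell$-factor is $O_k(1)$ and Claim \ref{prop:blowup_construction_cycles} suffices) or $\ell \geq k+3$ (where the hypothesis of Claim \ref{prop:blowup_construction_general} holds and yields the $\ell$-dependence). The instantiations of the independence numbers and the handling of the side conditions ($\ell \neq 4$, $\ell \neq k$, and $n$ large) match the paper's proof.
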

\begin{proof}
Note that since our bound hides constants that depend on $k$, if 
$\ell < k + 3$ then the assertion of the corollary follows from Claim \ref{prop:blowup_construction_cycles}. So we may assume that 
$\ell \geq k + 3$, which implies that 
$\lceil \ell/2 \rceil \geq \lceil k/2 \rceil + 1$. Under this assumption, Claim \ref{prop:blowup_construction_general} is applicable to $(T,H)=(C_k,C_{\ell})$, giving 
$\ex(n,C_k,C_{\ell}) =
\Omega_k(\ell^{\lceil k/2 \rceil}n^{\lfloor k/2 \rfloor})$, 
and to $(T,H) = (P_k,C_{\ell})$, giving 
$\ex(n,P_k,C_{\ell}) =
\Omega_k(\ell^{\lfloor (k+1)/2 \rfloor}n^{\lceil (k+1)/2 \rceil})$.
\end{proof}


When excluding $C_4$, a different construction is required.
The construction we use is due to Erd\H{o}s and R\'{e}nyi \cite{ER}. The case of $\ex(n,C_3,C_4)$ was handled (using the same construction) in \cite{Alon_Shikhelman}. Via the following lemma, we get the lower bound in the first item of Theorem \ref{thm:main} and of Theorem \ref{thm:ex_path_cycle}. 
\begin{lemma}\label{lem:C4}
	Let $q$ be a
	prime power and set
	$n = q^2 - 1$. Then there is an $n$-vertex $C_4$-free graph which contains at least $\left( \frac{1}{2k} - o(1) \right)n^{\frac{k}{2}}$ copies of $C_k$ for every $4 \neq k \geq 3$, and at least
	$\left( \frac{1}{2} - o(1) \right)n^{\frac{k}{2}+1}$ copies of $P_{k}$ for every $k \geq 1$. Here, the $o(1)$ term is a function which depends on $k$ and tends to $0$ as $n$ tends to infinity. Hence,
	$\ex(n,C_k,C_4) \geq \left( \frac{1}{2k} - o(1) \right)n^{\frac{k}{2}}$ for every $4 \neq k \geq 3$, and
	$\ex(n,P_k,C_4) \geq \left( \frac{1}{2} - o(1) \right)n^{\frac{k}{2}+1}$ for every $k \geq 1$.
\end{lemma}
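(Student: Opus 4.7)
The construction will be the classical Erd\H{o}s--R\'enyi polarity graph of the projective plane $PG(2,q)$, slightly trimmed to have exactly $q^2-1$ vertices. Explicitly, take $V(G)$ to be (a suitable subset of) the points of $PG(2,q)$, with $u \sim v$ iff $u$ lies on the polar line of $v$, and with self-loops at absolute points removed. The two facts about $G$ we will exploit are: (i) every two distinct vertices have \emph{exactly one} common neighbor (since any two points of $PG(2,q)$ lie on a unique line, whose pole supplies the common neighbor); (ii) every vertex has degree $q$ or $q+1$. In particular $G$ is $C_4$-free, since (i) says any two vertices have at most one common neighbor.

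To count copies of $P_k$, I would count labeled paths (sequences of distinct vertices $v_0,\dots,v_k$ with consecutive vertices adjacent) by a one-vertex-at-a-time extension. Given a partial labeled path $v_0,\dots,v_j$ with $j<k$, the number of admissible choices for $v_{j+1}$ is $d(v_j) - |\{0 \le i < j : v_i \in N(v_j)\}| \ge q - k$, using property (ii) and the trivial bound $j \le k$ on the exclusion set. Iterating yields at least $n(q-k)^k = (1-o_k(1))\, n q^k$ labeled paths, and since each unlabeled $P_k$ corresponds to exactly two labeled paths, this gives $\#P_k(G) \ge (\tfrac12 - o_k(1))\, n q^k = (\tfrac12 - o_k(1))\, n^{k/2+1}$, as required.

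For cycles, I would count labeled cyclic orderings $(v_0,\dots,v_{k-1})$ of $C_k$ (distinct vertices with $v_i v_{i+1 \bmod k} \in E$); each unlabeled $C_k$ contributes exactly $2k$ such orderings. The key point is that property (i) forces the closing vertex to be essentially determined: given a labeled $(k-2)$-path $v_0,v_1,\dots,v_{k-2}$, there is a \emph{unique} common neighbor $w$ of $v_0$ and $v_{k-2}$, and setting $v_{k-1}:=w$ produces a labeled $C_k$ ordering provided $w \notin \{v_1,\dots,v_{k-3}\}$. The number of labeled $(k-2)$-paths is $(1-o_k(1))\, n q^{k-2}$ by the previous step, so it suffices to show that only $o_k(n q^{k-2})$ of them suffer $w \in \{v_1,\dots,v_{k-3}\}$. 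Since $n q^{k-2} = n \cdot n^{(k-2)/2} = n^{k/2}$, this yields $2k \cdot \#C_k(G) \ge (1-o_k(1))\, n^{k/2}$, i.e.\ $\#C_k(G) \ge (\tfrac{1}{2k}-o_k(1))\, n^{k/2}$.

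The main obstacle is the last step: ruling out the "bad" $(k-2)$-paths whose unique closing vertex $w$ collides with an already-used vertex. The plan is to observe that each such collision forces the existence of a small closed configuration (e.g.\ a short cycle or an $F$-subgraph on at most $k-1$ vertices containing the path), and then to bound the number of such configurations in $G$ by iterating property (i): in a graph where every two vertices have a unique common neighbor, the number of copies of any fixed graph on $t < k$ vertices is $O_k(q^{t-1}) = O_k(n^{(t-1)/2})$, which multiplied by the remaining free choices along the $(k-2)$-path is of order $n^{(k-1)/2} = o(n^{k/2})$. The case $k=3$ (triangles) is slightly special and will be handled separately, essentially reducing to counting the number of edges in $G$.
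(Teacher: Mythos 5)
Your construction and overall strategy coincide with the paper's: the Erd\H{o}s--R\'enyi polarity graph on $q^2-1$ vertices, a greedy degree-based count of paths, and closing each $(k-2)$-path through the unique common neighbour of its endpoints after discarding a negligible set of ``bad'' paths. However, your premise (i) is false as stated, and this leaves a real gap in the cycle count. In the polarity graph two distinct vertices have \emph{at most} one common neighbour, but not always exactly one: if $u$ is an absolute point and $v$ lies on the polar line of $u$, the unique candidate pole of the line $uv$ is $u$ itself, so $u$ and $v$ have no common neighbour; similarly, pairs whose pole is one of the trimmed-away points have none (in the paper's coordinates, $V=\mathbb{F}^2\setminus\{(0,0)\}$ with $(a,b)\sim(c,d)$ iff $ac+bd=1$, the linearly dependent pairs have no common neighbour). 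Consequently your bad-path analysis, which only treats the collision case $w\in\{v_1,\dots,v_{k-3}\}$, silently omits the $(k-2)$-paths whose endpoints have no common neighbour at all. The paper closes this with an extra step: all but $O(qn)$ pairs of vertices do have a common neighbour, and any fixed pair is joined by at most $q^{t-2}$ paths of length $t$, so this class of bad paths has size $O(qn)\cdot q^{k-4}=O(q^{k-1})=o\bigl(n^{k/2}\bigr)$. Your argument needs this (or an equivalent) addition.

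A second, smaller issue: the blanket claim that ``the number of copies of any fixed graph on $t<k$ vertices is $O_k(q^{t-1})$'' is false in general (an independent set on $t$ vertices has $\Theta(q^{2t})$ copies), and even for the connected configurations you actually need it rests on the quantitative lemma that two fixed vertices are joined by at most $q^{t-2}$ paths of length $t$ (maximum degree at most $q$ for the interior vertices, uniqueness of the common neighbour for the penultimate one). With that lemma made explicit, your bound on the second class of bad paths goes through essentially as the paper's does, and the remainder of your plan -- the greedy path count, the $2k$-fold overcounting of $(k-2)$-subpaths of a $k$-cycle, and deferring $k=3$ to an edge count -- is correct and matches the paper.
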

\begin{proof}
	The last part of the theorem is deduced from the first part as follows. It is known that for every large enough $x$ there is a prime in the interval $[x-x^{\theta},x]$ for an absolute constant
	$\theta \in [\frac{1}{2},1)$, see e.g. \cite{BHP}.
	Fixing a large enough $n$, let $p$ be a prime in
	$[x-x^{\theta},x]$ for $x = n^{1/2}$.
	Now take the construction from the first part of the theorem on $p^2-1$ vertices and add isolated vertices to get a graph on $n$ vertices. This graph gives the required lower bounds on $\ex(n,C_k,C_4)$ and $\ex(n,P_k,C_4)$.
	
	From now on we assume that $n = q^2 - 1$, where $q$ is a prime power. Let $\mathbb{F}$ be the field with $q$ elements. The vertex set of $G$ is $\mathbb{F}^2 \setminus \{(0,0)\}$ and a pair of vertices $(a,b),(c,d)$ are adjacent if and only if $ac + bd = 1$. Note that $(a,b) \in V(G)$ has a loop if and only if $a^2 + b^2 = 1$. The number of solutions to $x^2 + y^2 = 1$ is at most $2q$, since for every fixed $x \in \mathbb{F}$ there are at most $2$ solutions for $y$. This implies that the number of loops is at most $2q$.
	Note that for every
	$(a,b) \in V(G)$ there are $q$ solutions $(x,y)$ to
	$ax + by = 1$. Thus, the degree of every $(a,b) \in V(G)$ is either $q - 1$ or $q$, depending on whether or not $(a,b)$ has a loop. This implies that for every $k \geq 1$, $G$ contains at least $\frac{1}{2}n(q-1)(q-2)\dots(q-k) =
	\left( \frac{1}{2} - o(1) \right)n^{\frac{k}{2}+1}$ paths of length $k$.
	
	Observe that for every pair of vertices $(a,b),(c,d) \in V(G)$, there is at most one solution to the system $ax + by = cx + dy = 1$, implying that $(a,b)$ and $(c,d)$ have at most one common neighbour. This shows that $G$ is $C_4$-free. To finish the proof, it remains to show that the number of $k$-cycles in $G$ is as stated. Since this was proved for $k=3$ in \cite{Alon_Shikhelman}, we may assume from now on that $k \geq 5$.
	
	Note that if $(a,b),(c,d) \in V(G)$ are linearly independent and have no loops then they have a common neighbour. Indeed, by linear independence there is a (unique) solution to the system $ax + by = cx + dy = 1$. As $(a,b)$ and $(c,d)$ do not have loops, this solution is neither $(a,b)$ nor $(c,d)$, and hence it is a common neighbour of $(a,b)$ and $(c,d)$. As the number of loops in $G$ is at most $2q$, the number of pairs of vertices $(a,b),(c,d) \in V(G)$ for which either $(a,b)$ or $(c,d)$ has a loop is at most $2qn$. Furthermore, the number of collinear pairs
	$(a,b),(c,d) \in V(G)$ is $\frac{(q-1)n}{2}$. Therefore, all but
	$2qn + \frac{(q-1)n}{2} \leq 3nq$
	of the pairs of vertices are linearly independent and do not have loops, and hence have a common neighbour. We have thus proven the following.
	\begin{fact}\label{fact:common_neighbour_pairs}
		All but $3nq$ of the pairs of vertices in $G$ have a common neighbour.
	\end{fact}
	
	
	Note that for every $t \geq 2$ and $v_1,v_{t+1} \in V(G)$, the number of paths of length $t$ between $v_1$ and $v_{t+1}$ is at most $q^{t-2}$. Indeed, consider a path $v_1,\dots,v_{t+1}$. Since the maximal degree in $G$ is $q$, the number of choices of $v_2,\dots,v_{t-1}$ is at most $q^{t-2}$. Since $v_t$ is a common neighbour of $v_{t-1}$ and $v_{t+1}$, there is at most one choice for $v_t$ given $v_2,\dots,v_{t-1}$.
	
	A path is {\em good} if its endpoints have a common neighbour which is not on the path, and otherwise it is {\em bad}. To complete the proof, it is enough to show that for every $t \geq 3$, the number of bad paths of length $t$ is $O(nq^{t-1})$.
	Indeed, we already proved that $G$ contains at least
	$\left( \frac{1}{2} - o(1) \right)n^{\frac{k}{2}}$ paths of length $k-2$.
	Since the number of bad paths of length $k-2$ is
	$O(nq^{k-3}) = O \big( n^{\frac{k-1}{2}} \big)$, the number of good paths of length $k-2$ is at least $\left( \frac{1}{2} - o(1) \right)n^{\frac{k}{2}}$. A good path of length $k-2$ can be made into a $k$-cycle by adding the (unique) common neighbour of the endpoints of the path. Since every cycle contains $k$ subpaths of length $k-2$, the lemma follows.
	
	It thus remains to show that for every $t \geq 3$, the number of bad paths of length $t$ is $O(nq^{t-1})$. There are two types of bad paths: those whose endpoints do not have a common neighbour, and those whose endpoints have a common neighbour which is on the path. First, by Fact \ref{fact:common_neighbour_pairs}, the number of pairs of vertices $u,v \in V(G)$ which do not have a common neighbour is at most
	$3nq$.
	We proved that for each such $u,v$ there are at most $q^{t-2}$ paths of length $t$ between $u$ and $v$. Thus, there are at most $O(nq^{t-1})$ paths of length $t$ whose endpoints do not have a common neighbour. Second, let
	$u,v \in V(G)$ be vertices having a common neighbour and let $w$ be their unique common neighbour. The number of paths of length $t$ from $u$ to $v$ in which $w$ is at distance $i$ from $u$ (and hence at distance $t-i$ from $v$) is at most $q^{t-3}$ if $i \in \{1,t-1\}$ and at most $q^{i-2}q^{t-i-2} = q^{t-4}$ if $2 \leq i \leq t - 2$. By summing over $1 \leq i \leq t-1$ we get that the number of paths of length $t$ from $u$ to $v$ which contain $w$ is at most $2q^{t-3} + (t-3)q^{t-4} = O(q^{t-3})$. Since the number of choices for $u,v$ is at most $\binom{n}{2}$, the total number of paths of length $t$ that contain the common neighbour of their endpoints is $O(n^2q^{t-3}) = O(nq^{t-1})$. In conclusion, the number of bad paths is $O(nq^{t-1})$, as required.
\end{proof}

\noindent
We end this section by proving the lower bound in Proposition \ref{prop:C3_even_cycle}.
\begin{claim}\label{prop:C3_even_cycle_lower_bound}
	For every $\ell \geq 3$ we have
	$\ex(n,C_3,C_{2\ell}) =
	\Omega \big( \ex(n,\{C_4,C_6,\dots,C_{2\ell}\}) \big)$.
\end{claim}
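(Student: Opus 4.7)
The plan is to construct, for each $\ell \ge 3$, a $C_{2\ell}$-free graph on $n$ vertices containing $\Omega(\ex(n, \{C_4,\ldots,C_{2\ell}\}))$ triangles. I would start from an extremal $\{C_4, C_6, \ldots, C_{2\ell}\}$-free graph $H_0$ and pass to a bipartite subgraph $H \subseteq H_0$ retaining at least half the edges, via a random bipartition; $H$ is then still $\{C_4,\ldots,C_{2\ell}\}$-free and bipartite. I would then form $G$ from $H$ by adding, for each edge $e = \{u,v\} \in E(H)$, a new degree-two vertex $w_e$ whose only neighbours are $u$ and $v$. This immediately produces $e(H)$ triangles of the form $\{u,v,w_e\}$, and these are the only triangles in $G$ (the $w_e$'s are pairwise non-adjacent, and $H$ is triangle-free being bipartite). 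A choice of parameters and padding with isolated vertices lets me put $G$ on the correct number of vertices and translate the triangle count to the desired lower bound on $\ex(n, C_3, C_{2\ell})$.

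The heart of the argument is to show that $G$ is $C_{2\ell}$-free. Given a hypothetical $2\ell$-cycle $C$ in $G$, let $t$ be the number of new vertices on $C$. Each $w_e$ on $C$ has degree two, so its two $C$-neighbours are exactly the endpoints $u,v$ of $e$; replacing every such sub-path $u - w_e - v$ by the edge $\{u,v\} \in E(H)$ yields a closed walk $C'$ in $H$ of length $2\ell - t$. A short verification confirms that $C'$ is in fact a simple cycle in $H$: the old vertices of $C$ are distinct; distinct new vertices on $C$ contribute distinct shortcut edges; and if $w_e$ lies on $C$ then the edge $\{u,v\}$ cannot itself be on $C$ (otherwise $u$ or $v$ would have three $C$-neighbours). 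Since new vertices on $C$ are pairwise non-adjacent, between any two of them on $C$ there must lie at least one old vertex, so $t \le \ell$.

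To finish I would split on the parity of $t$. Bipartiteness of $H$ immediately rules out odd $t$ (it would force $C'$ to be an odd cycle in a bipartite graph). If $t = 0$ then $C' = C$ is a $C_{2\ell}$ in $H$, directly forbidden. If $t$ is even with $2 \le t \le \ell$, then $C'$ has even length $2\ell - t$ lying in $\{\ell, \ell+2, \ldots, 2\ell - 2\}$; using $\ell \ge 3$ this length is $\ge 4$ and $\le 2\ell - 2$, hence lies in the forbidden list $\{4, 6, \ldots, 2\ell\}$. Every case produces a contradiction, so $G$ is $C_{2\ell}$-free. I expect the main obstacle to be the shortcut-to-simple-cycle bookkeeping — checking carefully that shortcut edges do not collide with each other or with existing edges of $C$, and that $C'$ does not revisit a vertex — together with the bipartiteness-driven elimination of odd $t$, which is precisely why the random bipartition preprocessing is needed and also why $\ell = 2$ (where $2\ell - t$ can collapse to length $2$, yielding no forbidden cycle in $H$) falls outside the scope of this argument and requires the separate treatment already given in the paper.
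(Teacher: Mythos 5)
Your verification that the auxiliary graph is $C_{2\ell}$-free is essentially sound (the contraction of the paths $u\,\text{--}\,w_e\,\text{--}\,v$, the bound $t\le\ell$, and the parity analysis via bipartiteness all work), but the construction itself cannot give the claimed bound: it is off by a polynomial factor. You spend one new vertex $w_e$ per triangle, so your graph $G$ has $|V(H)|+e(H)$ vertices and only $e(H)$ triangles, i.e.\ at most a \emph{linear} number of triangles in $|V(G)|$. Since $\ex(n,\{C_4,C_6,\dots,C_{2\ell}\})$ is superlinear in $n$ (graphs of girth greater than $2\ell$ give $\Omega(n^{1+1/(2\ell)})$, and for $\ell\in\{2,3,5\}$ the truth is $\Theta(n^{1+1/\ell})$), no choice of parameters or padding with isolated vertices can recover the target: to get $\Omega(\ex(n,\{C_4,\dots,C_{2\ell}\}))$ triangles you would need $e(H)=\Omega(\ex(n,\cdot))$, which already forces $|V(G)|\ge e(H)=\omega(n)$.

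The fix (and the route the paper takes, following Gy\H{o}ri--Li) is to make each added vertex responsible for many triangles rather than one. Take a maximum $n\times n$ \emph{bipartite} $\{C_4,\dots,C_{2\ell}\}$-free graph $G'=(A\cup B,E)$ and replace each vertex $a\in A$ by an edge $a_1a_2$, joining both $a_1$ and $a_2$ to every $B$-neighbour of $a$. This adds only $|A|=n$ vertices (so $G$ has $3n$ vertices) yet creates one triangle $a_1a_2b$ per edge $ab$ of $G'$, giving $e(G')\ge\frac12\ex(2n,\{C_4,\dots,C_{2\ell}\})$ triangles, which is $\Omega(\ex(3n,\cdot))$ as required. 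The $C_{2\ell}$-freeness check for this construction is then a contraction argument in the same spirit as yours, but one must additionally handle the case where both copies $a_1,a_2$ of some $a\in A$ lie on the cycle, which is where the assumption $2\ell\ge 6$ enters. Also, as a minor point in your write-up: if both $w_e$ and the edge $\{u,v\}$ lay on $C$, the contradiction is that $C$ would close up into a triangle (each of $u,v,w_e$ already has its two cycle-neighbours among the other two), not that some vertex acquires three neighbours on $C$.
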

\begin{proof}
	We use an argument similar to the one used in \cite{GL}.
	Let $G' = (A \cup B, E)$ be a maximum size
	$n \times n$ bipartite graph with no $C_4,C_6,\dots,C_{2\ell}$. Let $G$ be the graph obtained from $G'$ by replacing every vertex of $A$ by an edge (and replacing edges of $G'$ by copies of
	$K_{2,1}$). Then $G$ has $3n$ vertices, and
	one triangle per each edge of $G'$; so
	$G$ contains
	$e(G') \geq \frac{1}{2} \cdot \ex(2n,\{C_4,C_6,\dots,C_{2\ell}\})$ triangles. Now assume by contradiction that $C$ is a copy of $C_{2\ell}$ in $G$. By contracting the edges of $C$ inside $A$, we get a closed walk $C'$ in $G'$ of length at most $2\ell$. For each $a \in A$, let $a_1$ and $a_2$ denote the two ``copies" of $a$ in $G$. If for every $a \in C' \cap A$, only one of the copies of $a$ is in $C$, then $C' = C$, in contradiction to the $C_{2\ell}$-freeness of $G'$. So there is some $a \in A$ such that $a_1,a_2 \in C$. In the cycle $C$ there are two paths between $a_1$ and $a_2$, and since $|C| = 2\ell \geq 6$, one of these paths must have length at least $3$. Hence, there are distinct $b_1,b_2 \in B$ such that $(a_1,b_1),(a_2,b_2) \in E(G)$, and there is a path $P$ in $G$ between $b_1$ and $b_2$ which does not go through $a_1$ or $a_2$. Contracting $P$ gives a path $P'$ in $G'$ between $b_1$ and $b_2$, which does not go through $a$. Then $a,b_1,P,b_2,a$ is a cycle in $G'$ of length at most $2\ell$, in contradiction to the choice of $G'$.
\end{proof}

\end{document}